\newtheorem{thm}{Theorem}[section]
\newtheorem{cor}[thm]{Corollary}
\newtheorem{lem}[thm]{Lemma}
\newtheorem{prop}[thm]{Proposition}
\theoremstyle{definition}
\theoremstyle{remark}
\newtheorem{rem}[thm]{Remark}
\numberwithin{equation}{section}
\newtheoremstyle{dotless}{5pt}{3pt}{\itshape}{}{}{)}{ }{}
\theoremstyle{dotless}
\begin{document}

\newcommand{\thmref}[1]{Theorem~\ref{#1}}
\newcommand{\secref}[1]{Section~\ref{#1}}
\newcommand{\lemref}[1]{Lemma~\ref{#1}}
\newcommand{\propref}[1]{Proposition~\ref{#1}}
\newcommand{\corref}[1]{Corollary~\ref{#1}}
\newcommand{\remref}[1]{Remark~\ref{#1}}
\newcommand{\eqnref}[1]{(\ref{#1})}
\newcommand{\exref}[1]{Example~\ref{#1}}

\newcommand{\nc}{\newcommand}
\nc{\bi}{\bibitem}

\nc{\mf}{\mathfrak}
\nc{\mc}{\mathcal}
\nc{\ov}{\overline}
\nc{\wt}{\widetilde}

\nc{\Z}{{\mathbb Z}}
\nc{\C}{{\mathbb C}}
\nc{\N}{{\mathbb N}}
\nc{\F}{{\mf F}}
\nc{\la}{\lambda}
\nc{\ep}{\epsilon}
\nc{\La}{\Lambda}

\nc{\G}{{\mathfrak g}}
\nc{\DG}{\widetilde{\mathfrak g}}
\nc{\SG}{\overline{\mathfrak g}}
\nc{\Gd}{{\mathfrak g}^\diamond}
\nc{\SGd}{\overline{\mathfrak g}^\diamond}

\nc{\h}{{\mathfrak h}}
\nc{\Dh}{\widetilde{\mathfrak h}}
\nc{\Sh}{\overline{\mathfrak h}}
\nc{\hd}{{\mathfrak h}^\diamond}
\nc{\Shd}{\overline{\mathfrak h}^\diamond}

\nc{\cl}{{\mathfrak l}}
\nc{\Dl}{\widetilde{\mathfrak l}}
\nc{\Sl}{\overline{\mathfrak l}}
\nc{\ld}{{\mathfrak l}^\diamond}
\nc{\Sld}{\overline{\mathfrak l}^\diamond}

\nc{\clf}{{\mathfrak l}(m,n)}
\nc{\Dlf}{\widetilde{\mathfrak l}(m,n)}
\nc{\Slf}{\overline{\mathfrak l}(m,n)}
\nc{\ldf}{{\mathfrak l}^\diamond(m,n)}
\nc{\Sldf}{\overline{\mathfrak l}^\diamond(m,n)}

\nc{\n}{\mf n}

\nc{\D}{\mc D} \nc{\Li}{{\mc L}}
\nc{\is}{{\mathbf i}} \nc{\V}{\mf V}  \nc{\NS}{\mf N}
\nc{\dt}{\mathord{\hbox{${\frac{d}{d t}}$}}} \nc{\E}{\mc E}
\nc{\ba}{\tilde{\pa}} \nc{\half}{\frac{1}{2}}
 \nc{\hf}{\frac{1}{2}}
\nc{\hgl}{\widehat{\mathfrak{gl}}} \nc{\gl}{{\mathfrak{gl}}}
\nc{\hz}{\hf+\Z}
\nc{\dinfty}{{\infty\vert\infty}} \nc{\SLa}{\overline{\Lambda}}
\nc{\SF}{\overline{\mathfrak F}} \nc{\SP}{\overline{\mathcal P}}
\nc{\U}{\mathfrak u} \nc{\SU}{\overline{\mathfrak u}}

\nc{\osp}{\mf{osp}}
\nc{\spo}{\mf{spo}}
\nc{\hosp}{\widehat{\mf{osp}}}
\nc{\hspo}{\widehat{\mf{spo}}}
\nc{\I}{\mathbb{I}}
\nc{\X}{\mathbb{X}}
\nc{\Y}{\mathbb{Y}}
\nc{\hh}{\widehat{\mf{h}}}
\nc{\cc}{{\mathfrak c}}
\nc{\dd}{{\mathfrak d}}
\nc{\aaa}{{\mf A}}
\nc{\xx}{{\mf x}}
\nc{\wty}{\widetilde{\mathbb Y}}
\nc{\ovy}{\overline{\mathbb Y}}
\nc{\vep}{\bar{\epsilon}}

\nc{\On}{{\mc O}}
\nc{\Otn}{\wt{\mc O}}
\nc{\Orn}{\ov{\mc O}}
\nc{\Oln}{\mc{O}^{\diamond}}
\nc{\Olrn}{\ov{\mc O}^{\diamond}}

\nc{\Otnb}{\wt{\mc O}^{\bar{0}}}
\nc{\Ornb}{\ov{\mc O}^{\bar{0}}}
\nc{\Olnb}{\mc{O}^{\diamond {\bar{0}}}}
\nc{\Olrnb}{\ov{\mc O}^{\diamond {\bar{0}}}}

\nc{\Oa}{{\mc O}_a}
\nc{\Ot}{\wt{\mc O}_a}
\nc{\Or}{\ov{\mc O}_a}
\nc{\Ol}{\mc{O}^{\diamond}_a}
\nc{\Olr}{\ov{\mc O}^{\diamond}_a}

\nc{\Otb}{\wt{\mc O}^{\bar{0}}_a}
\nc{\Orb}{\ov{\mc O}^{\bar{0}}_a}
\nc{\Olb}{\mc{O}^{\diamond {\bar{0}}}_a}
\nc{\Olrb}{\ov{\mc O}^{\diamond {\bar{0}}}_a}

\nc{\Oaf}{{\mc O}_a(m,n)}
\nc{\Otf}{\wt{\mc O}^{\bar{0}}_a(m,n)}
\nc{\Orf}{\ov{\mc O}^{\bar{0}}_a(m,n)}
\nc{\Olf}{\mc{O}^{\diamond {\bar{0}}}_a(m,n)}
\nc{\Olrf}{\ov{\mc O}^{\diamond {\bar{0}}}_a(m,n)}

\nc{\Oanf}{{\mc O}(m,n)}
\nc{\Otnf}{\wt{\mc O}^{\bar{0}}(m,n)}
\nc{\Ornf}{\ov{\mc O}^{\bar{0}}(m,n)}
\nc{\Olnf}{\mc{O}^{\diamond {\bar{0}}}(m,n)}
\nc{\Olrnf}{\ov{\mc O}^{\diamond {\bar{0}}}(m,n)}

\nc{\glinf}{\widehat{\gl}_\infty} \nc{\VV}{\mathbb V}
\nc{\WW}{\mathbb V^*}

\advance\headheight by 2pt

\title[Super duality and applications]{Super duality for general
linear Lie superalgebras and applications}

\author[Cheng]{Shun-Jen Cheng}
\address{Institute of Mathematics, Academia Sinica, Taipei,
Taiwan 11529} \email{chengsj@math.sinica.edu.tw}

\author[Lam]{Ngau Lam}
\address{Department of Mathematics, National Cheng-Kung University, Tainan, Taiwan 70101}
\email{nlam@mail.ncku.edu.tw}

\author[Wang]{Weiqiang Wang}
\address{Department of Mathematics, University of Virginia, Charlottesville, VA 22904}
\email{ww9c@virginia.edu}

\begin{abstract}
We apply the super duality formalism recently developed by the
authors to obtain new equivalences of various module categories of
general linear Lie superalgebras. We establish the correspondence of
standard, tilting, and simple modules, as well as the identification
of the $\mf u$-homology groups, under these category equivalences.
As an application, we obtain a complete solution of the irreducible
character problem for some new parabolic BGG categories of
$\gl(m|n)$-modules, including the full BGG category of
$\gl(m|2)$-modules, in terms of type $A$ Kazhdan-Lusztig
polynomials.
\end{abstract}



 \maketitle

\section{Introduction}

Super duality is a powerful new approach developed in the past few
years in the study of representation theory of Lie superalgebras. It
provides a surprising direct link between various parabolic BGG
categories of modules of Lie superalgebras and Lie algebras, and
allows us to solve the fundamental irreducible character problem for
Lie superalgebras in terms of Kazhdan-Lusztig polynomials for
semisimple (or more generally Kac-Moody) Lie algebras.  Super
duality was first formulated as conjectures for general linear Lie
superalgebras in \cite{CWZ, CW}, and then formulated and established
in much generality in \cite{CL, CLW} recently. Our super duality
solution of the irreducible character problem for Lie superalgebras
ultimately depends on the solution \cite{BB, BK} of Kazhdan-Lusztig
conjecture for Lie algebras \cite{KL, Deo}. We also refer to
\cite{BS} for a completely different approach towards the very
special version of super duality conjecture as formulated in \cite{CWZ}.

The goal of this paper is to formulate and establish a new form of
super duality (which is an equivalence of categories) for the general
linear Lie superalgebras. The super duality formalism typically
takes advantage of a type $A$ branch which many Dynkin diagrams
possess. In this paper we will apply the super duality formalism
simultaneously to the two ends of the (super type $A$) Dynkin
diagrams. We in addition establish a correspondence of tilting
modules under super duality, which was omitted in our previous
paper \cite{CLW}, besides the correspondences of standard and simple
modules under super duality. We also show that super duality
is actually an equivalence of {\em tensor} categories, which was not
addressed in our earlier work.

The version of super duality in this paper allows us to settle the
irreducible character problem in some new parabolic BGG categories
of $\gl(m|n)$-modules  not covered in \cite{CW, CL, CLW} in terms of
parabolic Kazhdan-Lusztig polynomials of type $A$. Brundan
\cite{Br1} conjectured that the full BGG category of
$\gl(m|n)$-modules (of integer weights) categorifies the Fock space
$\VV^{\otimes m} \otimes (\VV^*)^{\otimes n}$, and he established a
maximal parabolic version of this  conjecture for the category of
finite-dimensional $\gl(m|n)$-modules, where $\VV$ and $\VV^*$
denote the natural $U_q(\gl_\infty)$-module and its dual module,
respectively. Various {\em parabolic} versions of Brundan's
conjecture have been formulated and settled since then via the super
duality approach (\cite{ CW, CL}). However, the irreducible
character problem in the {\em full} BGG category for general $m$ and
$n$ remained unsettled in {\em loc. cit.}, except when $m$ or $n$ is
at most $1$. Our paper confirms a variant of Brundan's conjecture on
the {\em full} BGG category of $\gl(m|n)$-modules (with respect to a
nonstandard Borel subalgebra of block type $1|m|1$) for $n=2$ or
$m=2$, and we will return to address the case for general $m$ and
$n$ in another paper.

Recall the infinite-dimensional Lie algebra $\glinf$ (which is a
central extension with a central element denoted by $K$) has played
a fundamental role in many different contexts. If $K$ acts as a
scalar $\ell$ on a $\glinf$-module $M$, then $\ell$  is called the
level of the module $M$. Let $\ell$ be a positive integer. A very
special case of super duality states that the (semisimple)
category of integrable $\glinf$-modules of positive level $\ell$ is
equivalent to a suitable category of $\glinf$-modules of negative
level $-\ell$ (which is far from being obvious to be semisimple); A
weak version of the super duality on the Grothendieck group level
suffices to recover the character formulas established by completely
different methods in \cite{KR} and \cite[Remark 5.2]{CL0}.

Following Vogan \cite{V}, a computation of the $\mf u$-homology groups
is basically a computation of the (parabolic) Kazhdan-Lusztig
polynomials. Super duality identifies the corresponding $\mf
u$-homology groups with coefficients in modules belonging to
different categories. This in particular allows us to recover easily
the computation of $\mf u$-homology groups with coefficients in
modules appearing in Howe duality decompositions (which were
computed by completely different techniques in \cite{CK, CKW, HLT,
LZ}).

To keep the paper at a reasonable length, we have omitted several
proofs leading towards super duality, when they are similar to the ones
in \cite{CL, CLW}. We also refer directly to
\cite{Br1, CW} for the Fock space formulation of some parabolic
versions of Brundan's conjecture.

We shall use the following notations throughout this paper. The
symbols $\Z$, $\N$, and $\Z_+$ stand for the sets of all, positive,
and non-negative integers, respectively. For a superspace
$V=V_{\bar{0}}\oplus V_{\bar{1}}$ and a homogeneous element $v\in
V$, we use the notation $|v|$ to denote the $\Z_2$-degree of $v$.
Finally all vector spaces, algebras, tensor products, et cetera, are
over the field of complex numbers $\C$.

\section{Lie superalgebras associated to various Dynkin diagrams} \label{sec:superalgebras}

In this section, we introduce a general linear Lie superalgebra
$\DG$ of infinite rank and its subalgebras $\G$, $\SG$,
$\G^\diamond$ and $\SG^\diamond$. We also formulate the finite-rank
counterparts of these Lie superalgebras.

\subsection{General linear Lie superalgebra}  \label{sec:glhat}

We fix $k\in\Z_+$. We consider the following ordered set
$\wt{\I}$:
\begin{align}\label{order:standard}
\cdots <-{\frac{3}{2}}<-{1}<-{\hf}<0<\underbrace{\ov{1}<\ov{2}
<\cdots<\ov{k}}_{k}<\hf<1<\frac{3}{2}<\cdots,
\end{align}
For $k>0$, set ${\mathbb K}=\{\ov{1},\ldots,\ov{k}\}$, and for
$k=0$, set ${\mathbb K}=\emptyset$. We define the following subsets
of $\wt{\mathbb I}$:
\begin{align*}
&{\I} :={\mathbb K}\cup \Z,
&\ov{\I} :={\mathbb K}\cup -\Z_+\cup (\hf+\Z_+),\\
&{\I}^\diamond :={\mathbb K}\cup (-\hf-\Z_+)\cup \N,
&\ov{\I}^\diamond :={\mathbb K}\cup (\hf+\Z).
\end{align*}

Consider the infinite-dimensional superspace $\wt{V}$ with ordered
basis $\{v_i|i\in\wt{\I}\}$. We declare $|v_r|=\bar{0}$, if
$r\in\Z\cup\mathbb{K}$, and $|v_r|=\bar{1}$, if $r\in\hf+\Z$.  With
respect to this basis, a linear map on $\wt{V}$ may be identified
with a complex matrix $(a_{rs})_{r,s\in\wt{\I}}$.  The Lie
superalgebra $\gl(\wt{V})$ is the Lie subalgebra of linear
transformations on $\wt{V}$ consisting of $(a_{rs})$ with $a_{rs}=0$
for all but finitely many $a_{rs}$'s. Denote by
$E_{rs}\in\gl(\wt{V})$ the elementary matrix with $1$ at the $r$th
row and $s$th column and zero elsewhere. Denote by
$\DG:=\gl(\wt{V})\oplus\C K$ the central extension of $\gl(\wt{V})$
by a one-dimensional center $\C K$ determined by the $2$-cocycle
\begin{align*}
\alpha(A,B):=\text{Str}([\mathfrak{J},A]B),\quad A,B\in\gl(\wt{V}),
\end{align*}
where $\mf{J}=\sum_{r\le 0}E_{rr}$ and $\text{Str}$ denotes the
supertrace. Observe that the cocycle $\alpha$ is a coboundary.
Indeed, there is embedding $\iota:\gl(\wt{V})\rightarrow \DG$,
defined by sending $A\in \gl(\wt{V})$ to $A+{\rm Str}(\mf{J}A)K$
(cf. \cite[Section 2.5]{CLW}). It is clear that $\iota(\gl(\wt{V}))$
is an ideal of $\DG$ and  $\DG$ is a direct sum of the ideals
$\iota(\gl(\wt{V}))$ and $\C K$. For $A\in\gl(\wt{V})$ we denote
\begin{align*}
\widehat{A}:=\iota(A)-\text{Str}(\mf{J}A)K\in\DG.
\end{align*}

Let $\wt{\h}$ and $\wt{\mf{b}}$ denote the standard Cartan
subalgebra $\oplus_{r\in\wt{\I}}\C \widehat{E}_{rr}\oplus\C K$ and
the standard Borel subalgebra $\oplus_{r\le s,r,s\in\wt{\I}}\C
\widehat{E}_{rs}\oplus\C K$, respectively. Define $\La_0\in
\wt{\h}^*$ by
$$
\La_0(K)=1, \qquad  \La_0(\widehat{E}_{rr})=0,\; \forall
r\in\wt{\I}.
$$
Let $\epsilon_i\in \wt{\h}^*$ be determined by $\langle
\epsilon_i,\widehat{E}_{jj}\rangle=\delta_{ij}$ for $i,j\in\wt{\I}$ and
$\langle\epsilon_i,K\rangle=0$. We set
\begin{align*}
\alpha_{r}:=\epsilon_{r}-\epsilon_{r+\hf},\quad   r\in\hf\Z,
\qquad{\rm and }\quad\alpha_{\ov{j}}
:=\epsilon_{\ov{j}}-\epsilon_{\ov{j+1}},\quad 1\le j\le k-1.
\end{align*}
The totally ordered set $\wt{\I}$ gives rise to a positive system for
$\wt\G$
$$
\wt{\Delta}^+ =\{\,\epsilon_i-\epsilon_j\,\mid\, i<j,\,
i,j\in\wt{\I}\,\},
$$
whose associated fundamental system is given by
\begin{align*}
\wt{\Pi}&=
\begin{cases}
\{\,\alpha_r\,\mid\, r\in \hf\Z\setminus\{0\}\,\}
\cup\{\epsilon_0-\epsilon_{\ov{1}},\,\epsilon_{\ov{k}}-\epsilon_{\hf},\,
\alpha_{\ov{1}},\,\alpha_{\ov{2}},\cdots\,\alpha_{\ov{k-1}}\,\},\quad &\text{if $k\not=0$};\\
\{\,\alpha_r\,\mid\, r\in \hf\Z\,\}, \quad &\text{if $k=0$}.
\end{cases}
\end{align*}
Let \makebox(23,0){$\oval(20,14)$}\makebox(-20,8){${\mc{K}}$} denote
the following Dynkin diagram with fundamental system:
\begin{center}
\hskip -3cm \setlength{\unitlength}{0.16in}
\begin{picture}(24,3)
\put(8,2){\makebox(0,0)[c]{$\bigcirc$}}
\put(10.4,2){\makebox(0,0)[c]{$\bigcirc$}}
\put(14.85,2){\makebox(0,0)[c]{$\bigcirc$}}
\put(17.25,2){\makebox(0,0)[c]{$\bigcirc$}}
\put(5.6,2){\makebox(0,0)[c]{$\bigcirc$}}
\put(8.4,2){\line(1,0){1.55}} \put(10.82,2){\line(1,0){0.8}}
\put(13.2,2){\line(1,0){1.2}} \put(15.28,2){\line(1,0){1.45}}
\put(6,2){\line(1,0){1.4}}
\put(12.5,1.95){\makebox(0,0)[c]{$\cdots$}}
\put(5.5,1){\makebox(0,0)[c]{\tiny$\alpha_{\ov{1}}$}}
\put(8,1){\makebox(0,0)[c]{\tiny$\alpha_{\ov{2}}$}}
\put(10.3,1){\makebox(0,0)[c]{\tiny$\alpha_{\ov{3}}$}}
\put(15,1){\makebox(0,0)[c]{\tiny$\alpha_{\ov{k-2}}$}}
\put(17.2,1){\makebox(0,0)[c]{\tiny$\alpha_{\ov{k-1}}$}}
\end{picture}
\end{center}
The following is a Dynkin diagram together with the fundamental
system $\wt\Pi$ for $\wt{\G}$, where $\bigotimes$ denotes an odd
isotropic simple root:
\begin{center}\label{Dynkin:gl}
\hskip -1cm \setlength{\unitlength}{0.16in}
\begin{picture}(24,3)
\put(-3,2){\makebox(0,0)[c]{$D(\DG)$:}}
\put(8,2){\makebox(0,0)[c]{$\bigotimes$}}
\put(10.4,2){\makebox(0,0)[c]{$\bigcirc$}}
\put(16,2){\makebox(0,0)[c]{$\bigotimes$}}
\put(18.35,2){\makebox(0,0)[c]{$\bigotimes$}}
\put(20.8,2){\makebox(0,0)[c]{$\bigotimes$}}
\put(5.6,2){\makebox(0,0)[c]{$\bigotimes$}}
\put(8.4,2){\line(1,0){1.6}} \put(10.8,2){\line(1,0){1.5}}
\put(14,2){\line(1,0){1.5}} \put(16.4,2){\line(1,0){1.5}}
\put(18.8,2){\line(1,0){1.5}} \put(21.2,2){\line(1,0){1.5}}
\put(6,2){\line(1,0){1.5}}
\put(3.65,2){\line(1,0){1.5}}
\put(2.5,1.95){\makebox(0,0)[c]{$\cdots$}}
\put(13.15,1.95){\makebox(0,0)[c]{{\ovalBox(1.6,1.2){$\mc{K}$}}}}
\put(23.9,1.95){\makebox(0,0)[c]{$\cdots$}}
\put(8,1){\makebox(0,0)[c]{\tiny $\alpha_{-\hf}$}}
\put(10.5,1){\makebox(0,0)[c]{\tiny $\epsilon_0-\epsilon_{\ov{1}}$}}
\put(15.8,1){\makebox(0,0)[c]{\tiny $\epsilon_{\ov{k}}-\epsilon_{\hf}$}}
\put(18.4,1){\makebox(0,0)[c]{\tiny $\alpha_{\hf}$}}
\put(21,1){\makebox(0,0)[c]{\tiny $\alpha_{1}$}}
\put(5.4,1){\makebox(0,0)[c]{\tiny $\alpha_{-1}$}}
\end{picture}
\end{center}
When $k=0$, the diagram above means that the middle three terms are
replaced by the odd isotropic simple root $\epsilon_0-\epsilon_\hf$.
The Dynkin diagrams in the rest of the paper are interpreted in the
similar way for $k=0$.

Define
\begin{equation*}
\vert r\vert =
\begin{cases}0\quad&{\rm if}\,\,r\in \I;\\
1\quad&{\rm if}\,\, r\in \wt{\I}\backslash\I.
\end{cases}\\
\end{equation*}
Let $\tau$ be an automorphism of $\DG$ of order $4$ defined by
\begin{equation}\label{tau}
\tau(\widehat{E}_{rs})
:=-(-1)^{\vert r\vert(\vert r\vert+\vert s\vert)}\widehat{E}_{sr},\quad\tau(K)=-K.
\end{equation}

\subsection{The subalgebras of $\wt{\G}$}\label{subalgebras}

The subalgebra of $\wt{\G}$ generated by $K$ and $\widehat{E}_{rs}$
with $r,s\in {\I}$ (respectively $\ov{\I}$, ${\I}^\diamond$,
$\ov{\I}^\diamond$) is denoted by $\G$ (respectively $\SG$,
$\G^\diamond$ and $\SG^\diamond$). The Cartan subalgebras
(respectively Borel subalgebras) of $\G$, $\SG$, $\G^\diamond$ and
$\SG^\diamond$ induced from $\DG$ are denoted by $\mf{h}$,
$\ov{\mf{h}}$, $\mf{h}^\diamond$ and $\ov{\mf{h}}^\diamond$
(respectively $\mf{b}$, $\ov{\mf{b}}$, $\mf{b}^\diamond$ and
$\ov{\mf{b}}^\diamond$) respectively. We set
$$
\beta_{r}:=\epsilon_{r}-\epsilon_{r+1}, \quad \forall r\in\hf\Z.
$$
For $n\in\Z_+$, let
\makebox(23,0){$\oval(20,14)$}\makebox(-20,8){$\mc{L}_n$},
\makebox(23,0){$\oval(20,14)$}\makebox(-20,8){$\mc{L}_n^\diamond$},
\makebox(23,0){$\oval(20,14)$}\makebox(-20,8){${\mc{R}}_n$} and
\makebox(23,0){$\oval(20,14)$}\makebox(-20,8){$\ov{\mc{R}}_n$}
denote the following Dynkin diagrams with fixed fundamental systems:

\begin{center}
\hskip -3cm \setlength{\unitlength}{0.16in}
\begin{picture}(24,3)
\put(8,2){\makebox(0,0)[c]{$\bigcirc$}}
\put(10.4,2){\makebox(0,0)[c]{$\bigcirc$}}
\put(14.85,2){\makebox(0,0)[c]{$\bigcirc$}}
\put(17.25,2){\makebox(0,0)[c]{$\bigcirc$}}
\put(5.6,2){\makebox(0,0)[c]{$\bigcirc$}}
\put(8.4,2){\line(1,0){1.55}} \put(10.82,2){\line(1,0){0.8}}
\put(13.2,2){\line(1,0){1.2}} \put(15.28,2){\line(1,0){1.45}}
\put(6,2){\line(1,0){1.4}}
\put(12.5,1.95){\makebox(0,0)[c]{$\cdots$}}
\put(0,1.2){{\ovalBox(1.6,1.2){$\mc{L}_n$}}}
\put(5.5,1){\makebox(0,0)[c]{\tiny$\beta_{1-n}$}}
\put(8,1){\makebox(0,0)[c]{\tiny$\beta_{2-n}$}}
\put(10.3,1){\makebox(0,0)[c]{\tiny$\beta_{3-n}$}}
\put(15,1){\makebox(0,0)[c]{\tiny$\beta_{-2}$}}
\put(17.2,1){\makebox(0,0)[c]{\tiny$\beta_{-1}$}}
\end{picture}
\end{center}
\begin{center}
\hskip -3cm \setlength{\unitlength}{0.16in}
\begin{picture}(24,3)
\put(8,2){\makebox(0,0)[c]{$\bigcirc$}}
\put(10.4,2){\makebox(0,0)[c]{$\bigcirc$}}
\put(14.85,2){\makebox(0,0)[c]{$\bigcirc$}}
\put(17.25,2){\makebox(0,0)[c]{$\bigcirc$}}
\put(5.6,2){\makebox(0,0)[c]{$\bigcirc$}}
\put(8.4,2){\line(1,0){1.55}} \put(10.82,2){\line(1,0){0.8}}
\put(13.2,2){\line(1,0){1.2}} \put(15.28,2){\line(1,0){1.45}}
\put(6,2){\line(1,0){1.4}}
\put(12.5,1.95){\makebox(0,0)[c]{$\cdots$}}
\put(0,1.2){{\ovalBox(1.6,1.2){$\mc{L}_n^\diamond$}}}
\put(5.5,1){\makebox(0,0)[c]{\tiny$\beta_{\hf-n}$}}
\put(8,1){\makebox(0,0)[c]{\tiny$\beta_{\frac{3}{2}-n}$}}
\put(10.3,1){\makebox(0,0)[c]{\tiny$\beta_{\frac{5}{2}-n}$}}
\put(15,1){\makebox(0,0)[c]{\tiny$\beta_{-\frac{5}{2}}$}}
\put(17.2,1){\makebox(0,0)[c]{\tiny$\beta_{-\frac{3}{2}}$}}
\end{picture}
\end{center}
\begin{center}
\hskip -3cm \setlength{\unitlength}{0.16in}
\begin{picture}(24,3)
\put(8,2){\makebox(0,0)[c]{$\bigcirc$}}
\put(10.4,2){\makebox(0,0)[c]{$\bigcirc$}}
\put(14.85,2){\makebox(0,0)[c]{$\bigcirc$}}
\put(17.25,2){\makebox(0,0)[c]{$\bigcirc$}}
\put(5.6,2){\makebox(0,0)[c]{$\bigcirc$}}
\put(8.4,2){\line(1,0){1.55}} \put(10.82,2){\line(1,0){0.8}}
\put(13.2,2){\line(1,0){1.2}} \put(15.28,2){\line(1,0){1.45}}
\put(6,2){\line(1,0){1.4}}
\put(12.5,1.95){\makebox(0,0)[c]{$\cdots$}}
\put(0,1.2){{\ovalBox(1.6,1.2){$\mc{R}_n$}}}
\put(5.5,1){\makebox(0,0)[c]{\tiny$\beta_{1}$}}
\put(8,1){\makebox(0,0)[c]{\tiny$\beta_{2}$}}
\put(10.3,1){\makebox(0,0)[c]{\tiny$\beta_{3}$}}
\put(15,1){\makebox(0,0)[c]{\tiny$\beta_{n-2}$}}
\put(17.2,1){\makebox(0,0)[c]{\tiny$\beta_{n-1}$}}
\end{picture}
\end{center}
\begin{center}
\hskip -3cm \setlength{\unitlength}{0.16in}
\begin{picture}(24,3)
\put(8,2){\makebox(0,0)[c]{$\bigcirc$}}
\put(10.4,2){\makebox(0,0)[c]{$\bigcirc$}}
\put(14.85,2){\makebox(0,0)[c]{$\bigcirc$}}
\put(17.25,2){\makebox(0,0)[c]{$\bigcirc$}}
\put(5.6,2){\makebox(0,0)[c]{$\bigcirc$}}
\put(8.4,2){\line(1,0){1.55}} \put(10.82,2){\line(1,0){0.8}}
\put(13.2,2){\line(1,0){1.2}} \put(15.28,2){\line(1,0){1.45}}
\put(6,2){\line(1,0){1.4}}
\put(12.5,1.95){\makebox(0,0)[c]{$\cdots$}}
\put(0,1.2){{\ovalBox(1.6,1.2){$\ov{\mc{R}}_n$}}}
\put(5.5,1){\makebox(0,0)[c]{\tiny$\beta_{\hf}$}}
\put(8,1){\makebox(0,0)[c]{\tiny$\beta_{\frac{3}{2}}$}}
\put(10.3,1){\makebox(0,0)[c]{\tiny$\beta_{\frac{5}{2}}$}}
\put(15,1){\makebox(0,0)[c]{\tiny$\beta_{n-\frac{5}{2}}$}}
\put(17.2,1){\makebox(0,0)[c]{\tiny$\beta_{n-\frac{3}{2}}$}}
\end{picture}
\end{center}
In the limit $n\mapsto\infty$, the associated Dynkin diagrams of
\makebox(23,0){$\oval(20,14)$}\makebox(-20,8){$\mc{L}_n$},
\makebox(23,0){$\oval(20,14)$}\makebox(-20,8){$\mc{L}_n^\diamond$},
\makebox(23,0){$\oval(20,14)$}\makebox(-20,8){${\mc{R}}_n$} and
\makebox(23,0){$\oval(20,14)$}\makebox(-20,8){$\ov{\mc{R}}_n$} are
denoted by \makebox(23,0){$\oval(20,14)$}\makebox(-20,8){$\mc{L}$},
\makebox(23,0){$\oval(20,14)$}\makebox(-20,8){$\mc{L}^\diamond$},
\makebox(23,0){$\oval(20,14)$}\makebox(-20,8){${\mc{R}}$} and
\makebox(23,0){$\oval(20,14)$}\makebox(-20,8){$\ov{\mc{R}}$},
respectively.

For $m,n\in \Z_+\cup\{\infty\}$, let $\DG(m,n)$ denote the
subalgebra of $\DG$ generated by $E_{rs}$ with $-m< r,s< n+1$ and
$r,s\in \wt{\I}$. Note that $\DG(m,n)$ is a finite-dimensional
subalgebra for $m,n\in \Z_+$. Set
\begin{eqnarray*}
\G(m,n)=\G\cap\DG(m,n), &&\quad \SG(m,n)=\SG\cap\DG(m,n),
  \\
\G^\diamond(m,n)=\G^\diamond\cap\DG(m,n), && \quad
\SG^\diamond(m,n)=\SG^\diamond\cap\DG(m,n).
\end{eqnarray*}
Then $\wt{\mf{h}}(m,n)=\wt{\mf{h}}\cap \DG(m,n)$ and
$\wt{\mf{b}}(m,n)=\wt{\mf{b}}\cap \DG(m,n)$ are the Cartan and Borel
subalgebras of $\DG(m,n)$, and we have four variants of Cartan and
Borel subalgebras for the remaining four Lie superalgebras with self-explanatory
notations. The Dynkin diagrams with given fundamental systems of
$\G(m,n)$, $\SG(m,n)$, $\G^\diamond(m,n)$ and $\SG^\diamond(m,n)$
with respect to the Borel subalgebras $\mf{b}(m,n)$,
$\ov{\mf{b}}(m,n)$, $\mf{b}^\diamond(m,n)$ and
$\ov{\mf{b}}^\diamond(m,n)$, are given as follows:
\bigskip
\begin{center}
\hskip -3cm \setlength{\unitlength}{0.16in}
\begin{picture}(24,1)
\put(3,0.5){\makebox(0,0)[c]{${\mathcal D}(\G(m,n))$:}}
\put(9.25,0.5){\makebox(0,0)[c]{{\ovalBox(1.6,1.2){$\mc{L}_m$}}}}
\put(10.05,0.5){\line(1,0){1.85}}
\put(12.35,0.5){\makebox(0,0)[c]{$\bigcirc$}}
\put(12.8,0.5){\line(1,0){1.85}}
\put(15.45,0.5){\makebox(0,0)[c]{{\ovalBox(1.6,1.2){$\mc{K}$}}}}
\put(16.25,0.5){\line(1,0){1.85}}
\put(18.55,0.5){\makebox(0,0)[c]{$\bigcirc$}}
\put(19.0,0.5){\line(1,0){1.85}}
\put(21.65,0.5){\makebox(0,0)[c]{{\ovalBox(1.6,1.2){$\mc{R}_n$}}}}
\put(12.5,-0.5){\makebox(0,0)[c]{\tiny$\epsilon_{0}-\epsilon_{\bar{1}}$}}
\put(18.5,-0.5){\makebox(0,0)[c]{\tiny$\epsilon_{\ov{k}}-\epsilon_{1}$}}
\end{picture}
\end{center}
\bigskip
\begin{center}
\hskip -3cm \setlength{\unitlength}{0.16in}
\begin{picture}(24,1)
\put(3,0.5){\makebox(0,0)[c]{${\mathcal D}(\SG(m,n))$:}}
\put(9.25,0.5){\makebox(0,0)[c]{{\ovalBox(1.6,1.2){$\mc{L}_m$}}}}
\put(10.05,0.5){\line(1,0){1.85}}
\put(12.35,0.5){\makebox(0,0)[c]{$\bigcirc$}}
\put(12.8,0.5){\line(1,0){1.85}}
\put(15.45,0.5){\makebox(0,0)[c]{{\ovalBox(1.6,1.2){$\mc{K}$}}}}
\put(16.25,0.5){\line(1,0){1.85}}
\put(18.55,0.5){\makebox(0,0)[c]{$\bigotimes$}}
\put(19.0,0.5){\line(1,0){1.85}}
\put(21.65,0.5){\makebox(0,0)[c]{{\ovalBox(1.6,1.2){$\ov{\mc{R}}_n$}}}}
\put(12.5,-0.5){\makebox(0,0)[c]{\tiny$\epsilon_0-\epsilon_{\bar{1}}$}}
\put(18.5,-0.5){\makebox(0,0)[c]{\tiny$\epsilon_{\ov{k}}-\epsilon_{\hf}$}}
\end{picture}
\end{center}
\bigskip
\begin{center}
\hskip -3cm \setlength{\unitlength}{0.16in}
\begin{picture}(24,1)
\put(3,0.5){\makebox(0,0)[c]{${\mathcal D}(\G^\diamond(m,n))$:}}
\put(9.25,0.5){\makebox(0,0)[c]{{\ovalBox(1.6,1.2){$\mc{L}^\diamond_m$}}}}
\put(10.05,0.5){\line(1,0){1.85}}
\put(12.35,0.5){\makebox(0,0)[c]{$\bigotimes$}}
\put(12.8,0.5){\line(1,0){1.85}}
\put(15.45,0.5){\makebox(0,0)[c]{{\ovalBox(1.6,1.2){$\mc{K}$}}}}
\put(16.25,0.5){\line(1,0){1.85}}
\put(18.55,0.5){\makebox(0,0)[c]{$\bigcirc$}}
\put(19.0,0.5){\line(1,0){1.85}}
\put(21.65,0.5){\makebox(0,0)[c]{{\ovalBox(1.6,1.2){$\mc{R}_n$}}}}
\put(12.5,-0.5){\makebox(0,0)[c]{\tiny$\epsilon_{-\hf}-\epsilon_{\bar{1}}$}}
\put(18.5,-0.5){\makebox(0,0)[c]{\tiny$\epsilon_{\ov{k}}-\epsilon_{1}$}}
\end{picture}
\end{center}
\bigskip
\begin{center}
\hskip -3cm \setlength{\unitlength}{0.16in}
\begin{picture}(24,1)
\put(3,0.5){\makebox(0,0)[c]{${\mathcal D}(\SG^\diamond(m,n))$:}}
\put(9.25,0.5){\makebox(0,0)[c]{{\ovalBox(1.6,1.2){$\mc{L}^\diamond_m$}}}}
\put(10.05,0.5){\line(1,0){1.85}}
\put(12.35,0.5){\makebox(0,0)[c]{$\bigotimes$}}
\put(12.8,0.5){\line(1,0){1.85}}
\put(15.45,0.5){\makebox(0,0)[c]{{\ovalBox(1.6,1.2){$\mc{K}$}}}}
\put(16.25,0.5){\line(1,0){1.85}}
\put(18.55,0.5){\makebox(0,0)[c]{$\bigotimes$}}
\put(19.0,0.5){\line(1,0){1.85}}
\put(21.65,0.5){\makebox(0,0)[c]{{\ovalBox(1.6,1.2){$\ov{\mc{R}}_n$}}}}
\put(12.5,-0.5){\makebox(0,0)[c]{\tiny$\epsilon_{-\hf}-\epsilon_{\bar{1}}$}}
\put(18.5,-0.5){\makebox(0,0)[c]{\tiny$\epsilon_{\ov{k}}-\epsilon_\hf$}}
\end{picture}
\end{center}
\bigskip
For $m=n=\infty$, these become the Dynkin diagrams and simple
systems of $\G$, $\SG$, $\G^\diamond$ and $\SG^\diamond$ with
respect to the Borel subalgebras $\mf{b}$, $\ov{\mf{b}}$,
$\mf{b}^\diamond$ and $\ov{\mf{b}}^\diamond$, respectively. The
fundamental systems of $\G$, $\SG$, $\G^\diamond$ and $\SG^\diamond$
are denoted by ${\Pi}$,  $\ov{\Pi}$, ${\Pi}^\diamond$ and
$\ov{\Pi}^\diamond$,  respectively. These  fundamental systems of
$\DG(m,n)$, $\G(m,n)$, $\SG(m,n)$, $\G^\diamond(m,n)$ and
$\SG^\diamond(m,n)$ are denoted by  $\wt{\Pi}^\diamond(m,n)$,
${\Pi}(m,n)$,  $\ov{\Pi}(m,n)$, ${\Pi}^\diamond(m,n)$ and
$\ov{\Pi}^\diamond(m,n)$,  respectively.

\subsection{Levi subalgebras}\label{sec:Levi}

We fix an arbitrary subset $Y_0$ of $\{\alpha_{\ov{j}} \,\vert\,1\le
j\le k-1\}$. The set $\wt{Y}$ (respectively $Y$, $\ov{Y}$,
$Y^\diamond$, $\ov{Y}^\diamond$) denotes the union of $Y_0$ with the
subset of $\wt{\Pi}$ (respectively, of $\Pi$, $\ov{\Pi}$,
$\Pi^\diamond$, $\ov{\Pi}^\diamond$) consisting of elements of the
form $\epsilon_r-\epsilon_s$, with $r,s\in\wt{\I}\cap\hf\Z$
(respectively ${\I}\cap\hf\Z$, $\ov{\I}\cap\hf\Z$,
${\I}^\diamond\cap\hf\Z$, $\ov{\I}^\diamond\cap\hf\Z$) satisfying
$r<s\le 0$ or $0<r<s$. Let $\wt{\mf{l}}$ (respectively ${\mf{l}}$,
$\ov{\mf{l}}$, ${\mf{l}}^\diamond$, $\ov{\mf{l}}^\diamond$) be the
standard Levi subalgebra of $\wt{\G}$ (respectively $\G$, $\SG$,
$\G^\diamond$, $\SG^\diamond$) corresponding to the set $\wt{Y}$
(respectively $Y$, $\ov{Y}$, $Y^\diamond$, $\ov{Y}^\diamond$).  Let
$\wt{\mf{p}}=\wt{\mf{l}}+\wt{\mf{b}}$ (respectively
${\mf{p}}={\mf{l}}+{\mf{b}}$, $\ov{\mf{p}}=\ov{\mf{l}}+\ov{\mf{b}}$,
${\mf{p}}^\diamond={\mf{l}}^\diamond+{\mf{b}}^\diamond$,
$\ov{\mf{p}}^\diamond=\ov{\mf{l}}^\diamond+\ov{\mf{b}}^\diamond$) be
the corresponding parabolic subalgebra of $\wt{\G}$ (respectively
$\G$, $\SG$, $\G^\diamond$, $\SG^\diamond$) with nilradical
$\wt{\mf{u}}$ (respectively ${\mf{u}}$, $\ov{\mf{u}}$,
${\mf{u}}^\diamond$, $\ov{\mf{u}}^\diamond$) and opposite nilradical
$\wt{\mf{u}}_-$ (respectively ${\mf{u}}_-$, $\ov{\mf{u}}_-$,
${\mf{u}}^\diamond_-$, $\ov{\mf{u}}^\diamond_-$).

Denote the standard Borel subalgebras $\wt{\mf{b}}\cap{{\mf{l}}}$
(respectively $\wt{\mf{b}}\cap{\ov{\mf{l}}}$,
$\wt{\mf{b}}\cap{\mf{l}^\diamond}$,
$\wt{\mf{b}}\cap{\ov{\mf{l}}^\diamond}$) of ${\mf{l}}$ (respectively
$\ov{\mf{l}}$, ${\mf{l}}^\diamond$, $\ov{\mf{l}}^\diamond$) by
${\mf{b}}_{\mf{l}}$ (respectively  $\ov{\mf{b}}_{\mf{l}}$,
${\mf{b}}_{\mf{l}}^{\diamond }$, $\ov{\mf{b}}_{\mf{l}}^{\diamond}$).
For $\mu\in \wt{\mf{h}}^*$ (respectively ${\mf{h}}^*$,
$\ov{\mf{h}}^*$, ${\mf{h}}^{\diamond *}$, $\ov{\mf{h}}^{\diamond
*}$), let $L(\wt{\mf{l}},\mu)$ (respectively $L(\mf{l},\mu)$,
$L(\ov{\mf{l}},\mu)$, $L(\mf{l}^\diamond,\mu)$,
$L(\ov{\mf{l}}^\diamond,\mu)$) denote the highest weight irreducible
$\wt{\mf{l}}$-(respectively ${\mf{l}}$-, $\ov{\mf{l}}$-,
${\mf{l}}^\diamond$-, $\ov{\mf{l}}^\diamond$-)module of highest
weight $\mu$ with respect to the standard Borel. We extend
$L(\mf{l},\mu)$ to a $\mf{p}$-module by letting $\mf{u}$ act
trivially.  Define as usual the parabolic Verma module $\Delta(\mu)$
and its unique irreducible quotient $L(\mu)$ over $\G$:
\begin{align*}
\Delta(\mu):=\text{Ind}_{\mf{p}}^{\G}L(\mf{l},\mu), \qquad
\Delta(\mu) \twoheadrightarrow L(\mu).
\end{align*}
Similarly, we introduce the other four variants of parabolic Verma
and irreducible quotient modules with self-explanatory notations.

We shall also need the finite-rank counterparts of the above algebras. Let
$\wt{\mf{l}}(m,n)=\wt{\mf{l}}\cap \DG(m,n)$. Then $\wt{\mf{l}}(m,n)$
is a Levi subalgebra of $\DG(m,n)$. For $\mu\in \wt{\mf{h}}(m,n)^*$,
let $L(\wt{\mf{l}}(m,n),\mu)$ denote the highest weight irreducible
$\wt{\mf{l}}(m,n)$-module of highest weight $\mu$. As above, we can
define parabolic Verma $\wt{\G}(m,n)$-module
$\wt{\Delta}_{(m,n)}(\mu)$ and its irreducible quotient
$\wt{L}_{(m,n)}(\mu)$. In a completely parallel fashion, we have the
other four variants of Levi subalgebras, parabolic Verma modules,
and so on, with self-explanatory notations.

\subsection{The dominant weights}
\label{sec:hw}


Given a partition $\mu=(\mu_1,\mu_2,\ldots)$, let $\mu'$ denote its
conjugate partition. We also denote by $\theta(\mu)$ the modified
Frobenius coordinates of $\mu$:
\begin{equation*}
\theta(\mu)
:=(\theta(\mu)_{1/2},\theta(\mu)_1,\theta(\mu)_{3/2},\theta(\mu)_2,\ldots),
\end{equation*}
where
$$\theta(\mu)_{i-1/2}:=\langle\mu'_i-i+1\rangle, \quad
\theta(\mu)_i:=\langle\mu_i-i\rangle, \quad i\in\N.
$$
Here and below $\langle b\rangle:=\max\{b,0\}$ for all
$b\in\mathbb{R}$. Let $a,\la_{1}^0,\ldots,\la_{k}^0\in\C$, $\la^-$
and $\la^+$ be two partitions. Associated to the tuple
$\la=(a,\la_{1}^0,\ldots,\la_{k}^0;\la^-,\la^+)$, set
\begin{align*}
\La^+(\la)&:=\sum_{i=1}^{k}\la_{i}^0\epsilon_{\ov{i}}
+\sum_{j\in\N}\la^+_{j}\epsilon_{j} &
\ov{\La}^+(\la)&:=\sum_{i=1}^{k}\la_{i}^0\epsilon_{\ov{i}}
+\sum_{j\in\N}(\la^+)'_{j}\epsilon_{j-\hf},\\
{\La}^-(\la)&:=-\sum_{j\in\N}\la^-_{j}\epsilon_{-j+1}+ a\La_0,&
\ov{\La}^-(\la)&:= -\sum_{j\in\N}(\la^-)'_{j}\epsilon_{-j+\hf}+ a\La_0.
\end{align*}
The tuple $(a,\la_{1}^0,\ldots,\la_{k}^0;\la^-,\la^+)$ is said to
satisfy a {\em dominant condition} if
\begin{equation}\label{dominant}
\langle\sum_{i=1}^{k}\la_i^0\epsilon_{\ov{i}},
\alpha^\vee \rangle\in\Z_+, \quad {\rm for \,\,all}\,\,\, \alpha\in Y_0,
\end{equation}
 where $\alpha^\vee$
denotes the coroot of $\alpha$. Associated to such a dominant
tuple and $a\in \C$, we define the weights (which will be
called {\em dominant})
\begin{align*}
\wt{\la} &:=\sum_{i=1}^{k}\la_{i}^0\epsilon_{\ov{i}}
-\sum_{r\in\hf\N}\theta({\la^-}')_r\epsilon_{-r+\hf}
+ \sum_{r\in\hf\N}\theta(\la^+)_r\epsilon_r
+ a\La_0\in \wt{\h}^{*}\\
{\la}&:={\La}^-(\la)+\La^+(\la)\in {\h}^{*},\quad\quad\quad
\ov{\la}:=\La^-(\la)+\ov{\La}^+(\la)\in \ov{\h}^{*},\\
{\la}^\diamond&:=\ov{\La}^-(\la)+\La^+(\la)\in {\h}^{\diamond*},\quad\quad
\ov{\la}^\diamond:=\ov{\La}^-(\la)+\ov{\La}^+(\la)\in \ov{\h}^{\diamond*}.
\end{align*}
The number $a$ will be called the {\em level} of these weights.

We denote by $\wt{P}^+_a$ (respectively $P^+_a$, $\ov{P}^+_a$,
$P^{\diamond+}_a$, $\ov{P}^{\diamond+}_a$) the set of all dominant
weights of the form $\wt{\la}$ (respectively $\la$, $\ov{\la}$,
$\la^{\diamond}$, $\ov{\la}^{\diamond}$) with a fixed $a\in \C$.
Obviously, we have bijective maps between $\wt{P}^+_a$, $P^+_a$,
$\ov{P}^+_a$, $P^{\diamond+}_a$ and $\ov{P}^{\diamond+}_a$ given by
$\wt{\la}\leftrightarrow\la\leftrightarrow\ov{\la}
\leftrightarrow\la^{\diamond}\leftrightarrow\ov{\la}^{\diamond}$ for
$\la\in P^+_a$. Finally, we let $P^+=\bigcup_{a\in\C}P^+_a$.

\section{Change of highest weights for different Borel subalgebras}
\label{sec:borel}

In this section, using odd reflections, we will determine how a
highest weight for a highest weight module changes from a standard
Borel to another distinguished non-standard Borel subalgebra.

We shall briefly explain the effect of an odd reflection on the
highest weight of a highest weight irreducible module (cf., e.g.,
\cite[Lemma 1]{PS}, \cite[Lemma 1.4]{KW}). Fix a Borel subalgebra
$\mc{B}$ of a Lie superalgebra $\mc G$ with corresponding positive
system $\Phi_+(\mc{B})$. Let $\alpha$ be an isotropic odd simple
root and $\alpha^\vee$ be its corresponding coroot. Applying the odd
reflection with respect to $\alpha$ changes the Borel subalgebra
$\mc{B}$ into a new Borel subalgebra $\mc{B}^{\alpha}$ whose
corresponding positive system is
$\Phi_+(\mc{B}^{\alpha})=\{-\alpha\}\cup\Phi_+(\mc{B})\setminus\{\alpha\}$.

\begin{lem}  \label{lem:odd}
Let $\la$ be the highest weight with respect to $\mc{B}$ of an
irreducible module.  If $\langle\la,\alpha^\vee\rangle\not=0$, then
the highest weight of this irreducible module with respect to
$\mc{B}^{\alpha}$ is $\la-\alpha$.  If
$\langle\la,\alpha^\vee\rangle=0$, then the highest weight remains
to be $\la$.
\end{lem}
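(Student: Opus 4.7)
My plan is to fix a nonzero highest weight vector $v_\lambda$ of the irreducible $\mc{G}$-module $L$ with respect to $\mc{B}$ and analyze the vector $f_\alpha v_\lambda$, where $e_\alpha$ and $f_\alpha$ are root vectors for $\alpha$ and $-\alpha$, normalized so that the super-bracket satisfies $[e_\alpha, f_\alpha] = h_\alpha$, acting on weight vectors by $\langle \cdot \, , \alpha^\vee \rangle$. Since $\alpha$ is odd isotropic we have $[f_\alpha, f_\alpha] = 2 f_\alpha^2 = 0$, hence $f_\alpha^2 = 0$. The basic identity, obtained from $e_\alpha f_\alpha + f_\alpha e_\alpha = [e_\alpha, f_\alpha] = h_\alpha$ together with $e_\alpha v_\lambda = 0$, is
\[
e_\alpha f_\alpha v_\lambda \;=\; \langle \lambda, \alpha^\vee \rangle \, v_\lambda.
\]

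Before splitting into cases I would check that $f_\alpha v_\lambda$ is annihilated by $e_\beta$ for every $\beta \in \Phi_+(\mc{B}) \setminus \{\alpha\}$. Using the super-bracket relation $e_\beta f_\alpha = (-1)^{|\beta|} f_\alpha e_\beta + [e_\beta, f_\alpha]$, the first summand kills $v_\lambda$, while the second is (up to a scalar) a root vector of weight $\beta - \alpha$ acting on $v_\lambda$. Since $\alpha$ is simple and $\beta \in \Phi_+(\mc{B}) \setminus \{\alpha\}$, the standard root-theoretic fact that $\beta - \alpha$ is either not a root or again a positive root makes this second summand either zero or a positive-root vector applied to $v_\lambda$; in both situations the result vanishes. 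Combined with $f_\alpha^2 v_\lambda = 0$, this shows that $f_\alpha v_\lambda$ is annihilated by every positive-root vector of $\mc{B}^\alpha$.

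In Case~1, the displayed identity gives $f_\alpha v_\lambda \neq 0$, so it is a highest weight vector for $\mc{B}^\alpha$ of weight $\lambda - \alpha$; and since $v_\lambda = \langle \lambda, \alpha^\vee \rangle^{-1} e_\alpha \bigl( f_\alpha v_\lambda \bigr)$ lies in the $\mc{G}$-submodule generated by $f_\alpha v_\lambda$, irreducibility identifies $L$ with the irreducible $\mc{B}^\alpha$-module of highest weight $\lambda - \alpha$. In Case~2, the identity reads $e_\alpha f_\alpha v_\lambda = 0$, so combined with the previous step $f_\alpha v_\lambda$ would be a $\mc{B}$-primitive vector of weight $\lambda - \alpha$ in $L$ if nonzero. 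I would then invoke the standard fact that any $\mc{B}$-primitive vector in the irreducible highest weight module $L$ must lie in the top weight space $L_\lambda$ (else the submodule it generates could not contain $v_\lambda$, by comparing weights against $\lambda$), which rules out weight $\lambda - \alpha$ and forces $f_\alpha v_\lambda = 0$. Hence $v_\lambda$ itself is annihilated by every positive-root vector of $\mc{B}^\alpha$, confirming the highest weight remains $\lambda$.

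The main technical subtleties will be the super-sign bookkeeping in the commutator computations and the verification that $\beta - \alpha$ is never a negative root when $\alpha$ is a simple root and $\beta \neq \alpha$ is positive; the latter amounts to observing that expanding $\beta$ in the simple roots leaves a positive coefficient at some simple root other than $\alpha$, which persists in $\beta - \alpha$ and forces the result, if a root at all, to be positive.
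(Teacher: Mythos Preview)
Your argument is correct and is precisely the standard proof of this well-known fact. The paper does not actually supply a proof of this lemma; it merely cites \cite[Lemma~1]{PS} and \cite[Lemma~1.4]{KW}, and your write-up is essentially the argument one finds in those references.
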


Even though it is unclear how the structures of Verma modules are
related via odd reflections, the relation of their characters is
simply described as follows.
\begin{lem}\label{char:verma:change}
Let $\Delta(\mc G, {\mc B},\la)$ and $\Delta(\mc G,
\mc{B}^{\alpha},\la)$ denote the $\mc G$-Verma modules of highest
weight $\la$ with respect to the Borel subalgebras $\mc{B}$ and
$\mc{B}^{\alpha}$, respectively. Then we have
\begin{align*}
\text{ch}\Delta(\mc G, {\mc B},\la)=\text{ch}\Delta(\mc G, {\mc B}^{\alpha},\la-\alpha).
\end{align*}
\end{lem}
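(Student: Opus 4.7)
The plan is to compute both characters directly from the PBW theorem and compare them factor by factor. Recall that for a Verma module induced from a Borel subalgebra $\mc B$ of $\mc G$ with positive system $\Phi_+(\mc B) = \Phi_+^{\bar 0} \cup \Phi_+^{\bar 1}$, the standard character formula reads
\begin{equation*}
\text{ch}\,\Delta(\mc G, \mc B, \mu) = e^{\mu} \prod_{\beta \in \Phi_+^{\bar 0}(\mc B)} \frac{1}{1 - e^{-\beta}} \prod_{\beta \in \Phi_+^{\bar 1}(\mc B)} \bigl(1 + e^{-\beta}\bigr).
\end{equation*}
My first step would be to record this formula for both $(\mc B, \la)$ and $(\mc B^\alpha, \la - \alpha)$.

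Next, since $\alpha$ is an odd isotropic simple root, the even positive roots are unchanged under the odd reflection, while the odd positive roots change only in the replacement $\alpha \leftrightarrow -\alpha$; this is precisely the content of the formula $\Phi_+(\mc B^\alpha) = \{-\alpha\} \cup \Phi_+(\mc B) \setminus \{\alpha\}$ stated in the excerpt. Thus the two character expressions differ only in the shift of the leading exponential and in one single odd factor.

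The key computation is then the elementary identity
\begin{equation*}
e^{-\alpha}\bigl(1 + e^{\alpha}\bigr) = 1 + e^{-\alpha},
\end{equation*}
which, after factoring $e^{\la - \alpha} = e^{\la} \cdot e^{-\alpha}$ into the $\mc B^\alpha$-expression, converts the factor $(1 + e^{\alpha})$ coming from the new odd positive root $-\alpha$ of $\mc B^\alpha$ back into the factor $(1 + e^{-\alpha})$ corresponding to the odd positive root $\alpha$ of $\mc B$. All remaining factors match term by term.

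The only subtlety (and there is essentially no obstacle here) is to ensure the even part of the positive system is genuinely preserved by the odd reflection; this follows because $\alpha$ is odd isotropic, so $2\alpha$ is not a root and no even root is altered. Once this is verified, collecting the factors shows the two characters are equal, completing the argument.
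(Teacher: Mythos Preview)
Your argument is correct and is exactly the approach the paper takes: the paper's proof consists of the single line ``Follows from the identity $\Phi_+(\mc{B}^{\alpha})=\{-\alpha\}\cup\Phi_+(\mc{B})\setminus\{\alpha\}$,'' and you have simply unpacked what that identity does to the Verma character formula. The elementary identity $e^{-\alpha}(1+e^{\alpha})=1+e^{-\alpha}$ is precisely the computation the paper leaves implicit.
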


\begin{proof}
Follows from the identity
$\Phi_+(\mc{B}^{\alpha})=\{-\alpha\}\cup\Phi_+(\mc{B})\setminus\{\alpha\}$.
\end{proof}

For $n\in\N$, we introduce  the following total orderings of the
subsets of $\wt{\I}$:
 {\allowdisplaybreaks
\begin{align}
&\cdots\prec -\frac{3}{2}-n\prec -1-n\prec -\hf-n\prec -n\prec \hf-n
\prec \frac{3}{2}-n\prec \cdots\prec -\frac{3}{2} \prec -\hf \label{order:c}\\
&\prec 1-n\prec \cdots\prec -2\prec -1\prec 0\prec \ov{1}\prec
\cdots\prec \ov{k}\prec 1\prec 2\prec \cdots\prec n\nonumber\\
&\prec \hf\prec \frac{3}{2}\prec \cdots\prec n+\hf\prec n+1\prec
n+\frac{3}{2}\prec n+2\prec \cdots,\nonumber\\
&\cdots\prec -\frac{3}{2}-n\prec -1-n\prec -\hf-n\prec -n\prec \hf-n
\prec \frac{3}{2}-n\prec \cdots\prec -\frac{3}{2}\prec -\hf \label{order:s}\\
&\prec 1-n\prec \cdots\prec -2\prec -1\prec 0\prec \ov{1}\prec \cdots\prec
\ov{k}\prec \hf\prec \frac{3}{2}\prec \cdots\prec n-\hf\nonumber\\
&\prec 1\prec 2\prec \cdots\prec n\prec n+\hf\prec n+1\prec n+\frac{3}{2}\prec \cdots, \nonumber\\
&\cdots\prec -\frac{3}{2}-n\prec -1-n\prec -\hf-n\prec -n\prec 1-n\prec
\cdots\prec -1\prec 0 \label{order:v}\\
&\prec \hf-n \prec \frac32-n\prec \cdots\prec -\frac32\prec -\hf \prec \ov{1}\prec
\cdots\prec \ov{k}\prec 1\prec 2\prec \cdots\prec n\nonumber\\
&\prec \hf\prec \frac{3}{2}\prec \cdots\prec n+\hf\prec n+1\prec n+\frac{3}{2}\prec
 n+2\prec \cdots,\nonumber\\
&\cdots\prec -\frac{3}{2}-n\prec -1-n\prec -\hf-n\prec -n\prec 1-n\prec
 \cdots\prec -1\prec 0 \label{order:vs}\\
&\prec \hf-n \prec \frac32-n\prec \cdots\prec -\frac32\prec -\hf \prec \ov{1}\prec
 \cdots\prec \ov{k}\prec \hf\prec \frac{3}{2}\prec \cdots\prec n-\hf\nonumber\\
&\prec 1\prec 2\prec \cdots\prec n\prec n+\hf\prec n+1\prec n+\frac{3}{2}\prec \cdots. \nonumber
\end{align}
 }
For any total ordering of $\wt{\I}$, there is a Borel subalgebra of
$\wt{\G}$ spanned by the Cartan subalgebra $\wt{\h}$ and elements of
the form $\widehat{E}_{rs}$ such that $r<s$ with respect to the
ordering. Conversely, any Borel subalgebra of $\wt{\G}$ containing
the Cartan subalgebra $\wt{\h}$ determines a unique total ordering
of $\wt{\I}$. Let $\wt{\mf{b}}^{c}(n)$ (respectively
$\wt{\mf{b}}^{s}(n)$, $\wt{\mf{b}}^{\diamond c}(n)$,
$\wt{\mf{b}}^{\diamond s}(n)$) be the Borel subalgebras with respect
to the ordering \eqnref{order:c} (respectively \eqnref{order:s},
\eqnref{order:v}, \eqnref{order:vs}) of $\wt{\I}$.

Two elements $a$ and $b$ in an ordered set are said to be {\em
adjacent} if there is no element $j$ in the set satisfying $a<j<b$
or $a>j>b$. For an odd simple root of the form
$\epsilon_r-\epsilon_s$ with $r,s\in\wt{\I}$ in the root system of a
Borel subalgebra $\mc{B}$ of $\DG$ containing the Cartan subalgebra
$\wt{\h}$, the numbers $r$ and $s$ are adjacent with respect to the
corresponding ordering. The ordering corresponding to the new Borel
subalgebra obtained by applying the odd reflection with respect to
$\epsilon_r-\epsilon_s$ is the same as the ordering corresponding to
the Borel subalgebra $\mc{B}$ except reversing the ordering of $r$
and $s$.

\begin{rem}
Any ordering preserving the orderings of positive integers, positive
half integers, non-positive integers and negative half integers, and
satisfying $i<a<j$ for $i\in-\hf\Z_+$, $a\in\mathbb{K}$ and
$j\in\hf\N$, can be obtained by a sequence of reversing the
orderings of two adjacent indices $r$ and $s$ with $r\in\Z$ and
$s\in\hf+\Z$ satisfying $r,s>0$ or else $r,s\le 0$. Thus, the Borel
subalgebra with respect to an ordering satisfying the conditions
above can be obtained by applying a sequence of odd reflections to
the standard Borel subalgebra $\wt{\mf{b}}$. Moreover, we can choose
a sequence of odd reflections in such a way that it leaves the sets
of roots of $\wt{\mf{u}}$ and $\wt{\mf{u}}_-$ invariant.
\end{rem}

Hence, the Borel subalgebras $\wt{\mf{b}}^{c}(n)$,
$\wt{\mf{b}}^{s}(n)$, $\wt{\mf{b}}^{\diamond c}(n)$ and
$\wt{\mf{b}}^{\diamond s}(n)$ can be obtained from $\wt{\mf{b}}$ by
applying sequences of odd reflections leaving the sets of roots of
$\wt{\mf{u}}$ and $\wt{\mf{u}}_-$ invariant. Let us spell out
precisely the sequence of odd reflections required to obtain the
Borel subalgebra $\wt{\mf{b}}^{s}(n)$ from $\wt{\mf{b}}$ leaving the
sets of roots of $\wt{\mf{u}}$ and $\wt{\mf{u}}_-$ invariant. This
process can be easily modified for the remaining cases. Starting
with the Dynkin diagram of $\wt{\G}$, we apply the following
sequence $\hf n(n-1)$ odd reflections. First we apply one odd
reflection corresponding to $\epsilon_1-\epsilon_{\frac{3}{2}}$,
then we apply two odd reflections corresponding to
$\epsilon_2-\epsilon_{\frac{5}{2}}$ and
$\epsilon_1-\epsilon_{\frac{5}{2}}$.  After that we apply three odd
reflections corresponding to $\epsilon_3-\epsilon_{\frac{7}{2}}$,
$\epsilon_2-\epsilon_{\frac{7}{2}}$, and
$\epsilon_1-\epsilon_{\frac{7}{2}}$, et cetera, until finally we
apply $n-1$ odd reflections corresponding to
$\epsilon_{n-1}-\epsilon_{n-\hf}$,
$\epsilon_{n-2}-\epsilon_{n-\hf},\ldots,\epsilon_1-\epsilon_{n-\hf}$.
The corresponding fundamental system of the resulting new Borel
subalgebra for $\DG$ is listed as follows:
\begin{center}
\hskip -1cm \setlength{\unitlength}{0.16in}
\begin{picture}(24,4)
\put(15.25,2){\makebox(0,0)[c]{$\bigotimes$}}
\put(17.4,2){\makebox(0,0)[c]{$\bigcirc$}}
\put(21.9,2){\makebox(0,0)[c]{$\bigcirc$}}
\put(24.4,2){\makebox(0,0)[c]{$\bigotimes$}}
\put(26.8,2){\makebox(0,0)[c]{$\bigotimes$}}
\put(13.1,2){\line(1,0){1.70}}
\put(15.7,2){\line(1,0){1.25}} \put(17.8,2){\line(1,0){0.9}}
\put(20.1,2){\line(1,0){1.4}}
\put(22.35,2){\line(1,0){1.6}}
\put(24.9,2){\line(1,0){1.5}}
\put(27.2,2){\line(1,0){0.6}}
\put(19.5,1.95){\makebox(0,0)[c]{$\cdots$}}
\put(28.7,1.95){\makebox(0,0)[c]{$\cdots$}}
\put(15,1){\makebox(0,0)[c]{\tiny $\epsilon_{n-\hf}-\epsilon_1$}}
\put(17.8,1.1){\makebox(0,0)[c]{\tiny $\beta_{1}$}}
\put(22,1){\makebox(0,0)[c]{\tiny $\beta_{n-1}$}}
\put(24.4,1){\makebox(0,0)[c]{\tiny $\alpha_{n}$}}
\put(27,1){\makebox(0,0)[c]{\tiny $\alpha_{n+\hf}$}}
\put(8.8,2){\line(1,0){1.7}}
\put(11.8,2){\makebox(0,0)[c]{{\ovalBox(2.6,1.2){$\ov{\mc{R}}_{n}$}}}}
\put(5.6,1.95){\makebox(0,0)[c]{{\ovalBox(1.6,1.2){$\mc{K}$}}}}
\put(6.4,2){\line(1,0){1.5}}
\put(8.2,1){\makebox(0,0)[c]{\tiny $\epsilon_{\ov{k}}-\epsilon_{\hf}$}}
\put(8.35,2){\makebox(0,0)[c]{$\bigotimes$}}
\put(3,2){\makebox(0,0)[c]{$\bigcirc$}}
\put(0.8,2){\makebox(0,0)[c]{$\bigotimes$}}
\put(-1.5,2){\makebox(0,0)[c]{$\bigotimes$}}
\put(1.2,2){\line(1,0){1.4}} \put(3.4,2){\line(1,0){1.4}}
\put(-1.1,2){\line(1,0){1.4}}
\put(-2.6,2){\line(1,0){0.6}}
\put(-3.3,1.95){\makebox(0,0)[c]{$\cdots$}}
\put(-1.7,1){\makebox(0,0)[c]{\tiny $\alpha_{-1}$}}
\put(0.9,1){\makebox(0,0)[c]{\tiny $\alpha_{-\hf}$}}
\put(3.1,1){\makebox(0,0)[c]{\tiny $\epsilon_0-\epsilon_{\ov{1}}$}}
\end{picture}
\end{center}

Now we apply the following sequence of $\hf n(n-1)$ odd reflections
to the Dynkin diagram above. First we apply one odd reflection
corresponding to $\epsilon_{-1}-\epsilon_{-1/2}$, then we apply two
odd reflections corresponding to $\epsilon_{-2}-\epsilon_{-3/2}$ and
$\epsilon_{-2}-\epsilon_{-1/2}$.  After that we apply three odd
reflections corresponding to $\epsilon_{-3}-\epsilon_{-5/2}$,
$\epsilon_{-3}-\epsilon_{-3/2}$ and $\epsilon_{-3}-\epsilon_{-1/2}$,
et cetera, until finally we apply $n-1$ odd reflections
corresponding to
$\epsilon_{1-n}-\epsilon_{3/2-n},\epsilon_{1-n}-\epsilon_{5/2-n},
\ldots,\epsilon_{1-n}-\epsilon_{-1/2}$. We obtain the Dynkin diagram
of $\wt{\mf{b}}^{s}(n)$:
\begin{center}
\hskip 3cm \setlength{\unitlength}{0.16in}
\begin{picture}(24,4)
\put(15.25,2){\makebox(0,0)[c]{$\bigotimes$}}
\put(17.8,1.95){\makebox(0,0)[c]{{\ovalBox(1.6,1.2){$\mc{R}_n$}}}}
\put(20.45,2){\makebox(0,0)[c]{$\bigotimes$}}
\put(22.7,2){\makebox(0,0)[c]{$\bigotimes$}}
\put(24.5,2){\makebox(0,0)[c]{$\cdots$}}
\put(13.1,2){\line(1,0){1.70}}
\put(15.7,2){\line(1,0){1.25}}
\put(18.6,2){\line(1,0){1.4}}
\put(20.9,2){\line(1,0){1.3}}
\put(23.1,2){\line(1,0){0.5}}
\put(15.5,1){\makebox(0,0)[c]{\tiny $\epsilon_{n-\hf}-\epsilon_1$}}
\put(20.5,1){\makebox(0,0)[c]{\tiny $\alpha_{n}$}}
\put(23.1,1){\makebox(0,0)[c]{\tiny $\alpha_{n+\hf}$}}
\put(12.3,2){\makebox(0,0)[c]{{\ovalBox(1.6,1.2){$\ov{\mc{R}}_{n}$}}}}
\put(7.1,1.95){\makebox(0,0)[c]{{\ovalBox(1.4,1.2){$\mc{K}$}}}}
\put(9.5,1){\makebox(0,0)[c]{\tiny $\epsilon_{\ov{k}}-\epsilon_{\hf}$}}
\put(9.7,2){\makebox(0,0)[c]{$\bigotimes$}}
\put(4.7,2){\makebox(0,0)[c]{$\bigcirc$}}
\put(-0.3,2){\makebox(0,0)[c]{$\bigotimes$}}
\put(-5.2,2){\makebox(0,0)[c]{$\bigotimes$}}
\put(-6.4,2){\line(1,0){0.7}}
\put(-4.8,2){\line(1,0){1.3}}
\put(-2.1,2){\line(1,0){1.3}}
\put(0.1,2){\line(1,0){1.3}}
\put(2.9,2){\line(1,0){1.4}}
\put(5.1,2){\line(1,0){1.3}}
\put(7.8,2){\line(1,0){1.4}}
\put(10.1,2){\line(1,0){1.4}}
\put(-7.1,1.95){\makebox(0,0)[c]{$\cdots$}}
\put(-5,1){\makebox(0,0)[c]{\tiny $\alpha_{-n}$}}
\put(4.9,1){\makebox(0,0)[c]{\tiny $\epsilon_0-\epsilon_{\ov{1}}$}}
\put(2.15,2){\makebox(0,0)[c]{{\ovalBox(1.4,1.2){${\mc{L}}_{n}$}}}}
\put(-2.8,1.95){\makebox(0,0)[c]{{\ovalBox(1.4,1.2){${\mc{L}}_{n}^\diamond$}}}}
\put(.2,1){\makebox(0,0)[c]{\tiny $\epsilon_{-\hf}-\epsilon_{1-n}$}}
\end{picture}
\end{center}

The following lemma is a variant of Lemma~\ref{lem:odd} in our setting.

\begin{lem}  \label{hwt odd}
Let $\alpha$ be an odd simple root of the form
$\epsilon_r-\epsilon_s$, $r,s\in\wt{\I}$ satisfying $r<s\le 0$ or
$0<r<s$, in the root system of a Borel subalgebra $\mc{B}$ of
$\wt{\mf{l}}$ containing the Cartan subalgebra $\wt{\h}$. Let $\mc
B^\alpha$ denote the new Borel subalgebra obtained by applying the
odd reflection with respect to $\alpha$ to the Borel subalgebra
$\mc{B}$. For $\mu\in \wt{\mf{h}}^*$, let $v$ be a $\mc B$-highest
weight vector in $L(\wt{\mf{l}},\mu)$.
\begin{enumerate}
\item
If $\mu(\widehat{E}_{rr})+\mu(\widehat{E}_{ss})= 0$, then
$L(\wt{\mf{l}},\mu)$ is an $\wt{\mf{l}}$-module of $\mc
B^\alpha$-highest weight $\la$  and $v$ is a $\mc B^\alpha$-highest
weight vector.

\item
If $\mu(\widehat{E}_{rr})+\mu(\widehat{E}_{ss}) \neq 0$, then
$L(\wt{\mf{l}},\mu)$ is an $\wt{\mf{l}}$-module of $\mc
B^\alpha$-highest weight $\la -\alpha$ and $\widehat{E}_{sr} v$ is a
$\mc B^\alpha$-highest weight vector.
\end{enumerate}
\end{lem}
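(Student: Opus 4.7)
The plan is to deduce both parts from \lemref{lem:odd} applied to the Lie superalgebra $\wt{\mf l}$ with Borel $\mc B$ and the odd isotropic simple root $\alpha=\epsilon_r-\epsilon_s$. Beyond \lemref{lem:odd} two points must be supplied: the translation of the abstract coroot condition $\langle\mu,\alpha^\vee\rangle\ne 0$ into the explicit form $\mu(\widehat E_{rr})+\mu(\widehat E_{ss})\ne 0$, and the explicit identification of the new highest weight vector. Both reduce to one bracket computation together with a standard weight-multiplicity argument.

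Since $r$ and $s$ have opposite $\Z_2$-parity in $\wt{\mathbb I}$, both $\widehat E_{rs}$ and $\widehat E_{sr}$ are odd, so their bracket is an anticommutator. Using the Lie algebra embedding $\iota\colon\gl(\wt V)\to\DG$ from \secref{sec:glhat}, together with the identity $\widehat E_{pq}=\iota(E_{pq})$ for $p\ne q$, I would compute
\begin{align*}
[\widehat E_{rs},\widehat E_{sr}]=\iota(E_{rr}+E_{ss})=\widehat E_{rr}+\widehat E_{ss}+\bigl(\text{Str}(\mf J E_{rr})+\text{Str}(\mf J E_{ss})\bigr)K.
\end{align*}
Under the hypothesis $r<s\le 0$ or $0<r<s$, the central correction vanishes: in the first case $r,s$ both lie in the support of $\mf J$ but have opposite parities, so $(-1)^{|r|}+(-1)^{|s|}=0$; in the second case $\mf J E_{rr}=\mf J E_{ss}=0$. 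Hence $[\widehat E_{rs},\widehat E_{sr}]=\widehat E_{rr}+\widehat E_{ss}$, and the coroot pairing in \lemref{lem:odd} takes the form $\langle\mu,\alpha^\vee\rangle=\mu(\widehat E_{rr})+\mu(\widehat E_{ss})$.

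For part (1), \lemref{lem:odd} gives that $L(\wt{\mf l},\mu)$ has $\mc B^\alpha$-highest weight $\mu$; since the $\mu$-weight space of any irreducible $\mc B$-highest weight module is one-dimensional, the $\mc B^\alpha$-highest weight vector must be a scalar multiple of $v$, so $v$ itself suffices. For part (2), \lemref{lem:odd} gives $\mc B^\alpha$-highest weight $\mu-\alpha$, and I claim $\widehat E_{sr}v$ is a highest weight vector. Indeed $\widehat E_{rs}v=0$ since $\alpha$ is $\mc B$-positive, and the super-commutation relation yields
\begin{align*}
\widehat E_{rs}(\widehat E_{sr}v)=[\widehat E_{rs},\widehat E_{sr}]v-\widehat E_{sr}\widehat E_{rs}v=\bigl(\mu(\widehat E_{rr})+\mu(\widehat E_{ss})\bigr)v\ne 0,
\end{align*}
so $\widehat E_{sr}v\ne 0$. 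Linear independence of the $\gl$-type simple roots forces the $(\mu-\alpha)$-weight space of $L(\wt{\mf l},\mu)$ to be at most one-dimensional, and so \lemref{lem:odd} identifies this line with $\C\widehat E_{sr}v$. The one real obstacle is the central-term bookkeeping in the bracket calculation; the rest is a direct application of \lemref{lem:odd} and standard facts about weight multiplicities.
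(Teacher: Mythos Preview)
Your proof is correct and follows the same approach the paper intends: the paper does not give an explicit proof of this lemma, stating only that it is ``a variant of \lemref{lem:odd} in our setting,'' and you have correctly supplied the missing details---the bracket computation showing the central correction vanishes under the hypothesis $r<s\le 0$ or $0<r<s$, the resulting identification $\alpha^\vee=\widehat E_{rr}+\widehat E_{ss}$, and the one-dimensionality argument pinning down the new highest weight vector. Your use of the embedding $\iota$ to compute $[\widehat E_{rs},\widehat E_{sr}]$ is clean and your case analysis for the supertrace term is accurate.
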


Using similar arguments as in the proofs of \cite[Lemma 3.2]{CL} and
\cite[Corollary 3.3]{CL} (cf.~\cite[Proposition~4.3]{CLW}) together
with \lemref{hwt odd}, we have the following.

\begin{prop}\label{prop:change}
Let $\la=(a, \la_{1},\ldots,\la_{k};\la^-,\la^+)\in P^+_a$ and
$n\in\N$.
\begin{itemize}
\item[(i)]
Suppose that $(\la^-)'_{1}\le n$ and $ (\la^+)'_{1}\le n$. Then the
highest weight of $L(\wt{\mf{l}},\wt{\la})$ with respect to the
Borel subalgebra $\wt{\mf{b}}^{c}(n)\cap{\wt{\mf{l}}}$ is $\la$.
Furthermore, $\wt{\Delta}(\wt{\la})$ and $\wt{L}(\wt{\la})$ are
highest weight $\DG$-modules of highest weight $\la$ with respect to
the Borel  $\wt{\mf{b}}^{c}(n)$.

\item[(ii)]
Suppose that $(\la^-)'_{1}\le n$ and $\la^+_{1}\le n$. Then the
highest weight of $L(\wt{\mf{l}},\wt{\la})$ with respect to the
Borel subalgebra $\ov{\mf{b}}^{s}(n)\cap{\wt{\mf{l}}}$ is $\ov\la$.
Furthermore, $\wt{\Delta}(\wt{\la})$ and $\wt{L}(\wt{\la})$ are
highest weight $\DG$-modules of highest weight $\ov\la$ with respect
to the Borel  $\wt{\mf{b}}^{s}(n)$.

\item[(iii)]
Suppose that $\la^-_{1}\le n$ and $(\la^+)'_{1}\le n$. Then the
highest weight of $L(\wt{\mf{l}},\wt{\la})$ with respect to the
Borel subalgebra $\wt{\mf{b}}^{\diamond c}(n)\cap{\wt{\mf{l}}}$ is
$\la^{\diamond}$. Furthermore, $\wt{\Delta}(\wt{\la})$ and
$\wt{L}(\wt{\la})$ are highest weight $\DG$-modules of highest
weight $\la^{\diamond}$ with respect to the Borel
$\wt{\mf{b}}^{\diamond c}(n)$.

\item[(iv)]
Suppose that $\la^-_{1}\le n$ and $\la^+_{1}\le n$. Then the highest
weight of $L(\wt{\mf{l}},\wt{\la})$ with respect to the Borel
subalgebra $\ov{\mf{b}}^{\diamond s}(n)\cap{\wt{\mf{l}}}$ is
$\ov\la^{\diamond}$. Furthermore, $\wt{\Delta}(\wt{\la})$ and
$\wt{L}(\wt{\la})$ are highest weight $\DG$-modules of highest
weight $\ov\la^{\diamond}$ with respect to the Borel
$\wt{\mf{b}}^{\diamond s}(n)$.
\end{itemize}
\end{prop}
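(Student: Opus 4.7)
The plan is to prove each of (i)--(iv) by applying \lemref{hwt odd} step-by-step along the explicit sequence of odd reflections described earlier in this section, tracking the change of highest weight at each step. The four cases are entirely parallel; I will describe case (ii), for which the relevant reflection sequence was written out in the diagrams above, and indicate why the other three follow by cosmetic modifications of the same argument. Such modifications have been carried out in very similar settings in \cite[Lemma 3.2]{CL}, \cite[Corollary 3.3]{CL}, and \cite[Proposition~4.3]{CLW}, so the pattern is well established.

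For case (ii), I start from the standard Borel $\wt{\mf b}$ and its Levi restriction $\wt{\mf b}\cap\wt{\mf l}$, with respect to which $L(\wt{\mf l},\wt\la)$ has highest weight $\wt\la$ by construction, and successively apply the two blocks of $\hf n(n-1)$ odd reflections listed in the text to pass from $\wt{\mf b}$ to $\wt{\mf b}^{s}(n)$. At each reflection with respect to an odd root $\epsilon_r-\epsilon_s$, \lemref{hwt odd} determines whether the highest weight changes by subtracting $\epsilon_r-\epsilon_s$ (when $\wt\la(\widehat{E}_{rr})+\wt\la(\widehat{E}_{ss})\neq 0$) or remains unchanged. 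The bookkeeping uses the modified Frobenius formulas $\theta(\la^+)_j=\langle\la^+_j-j\rangle$ and $\theta(\la^+)_{j-\hf}=\langle(\la^+)'_j-j+1\rangle$ for $j\in\N$, together with the hypothesis $\la^+_1\le n$. By induction on the reflection step, one checks that the cumulative effect of the positive-side reflections transports $\sum_{r\in\hf\N}\theta(\la^+)_r\epsilon_r$ into $\sum_{j\in\N}(\la^+)'_j\epsilon_{j-\hf}=\ov\La^+(\la)$, while the analogous computation on the negative side (now governed by $(\la^-)'_1\le n$) leaves $\La^-(\la)$ intact, since it is already supported on integer indices. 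Summing these two contributions yields $\ov\la=\La^-(\la)+\ov\La^+(\la)$, as required.

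To upgrade the Levi statement to the $\DG$-statement for $\wt\Delta(\wt\la)$ and $\wt L(\wt\la)$, I will use that the chosen sequence of odd reflections was arranged to preserve the root sets of $\wt{\mf u}$ and $\wt{\mf u}_-$ (as observed in the Remark of this section). Consequently, a $\wt{\mf b}^{s}(n)\cap\wt{\mf l}$-highest weight vector in $L(\wt{\mf l},\wt\la)$ is automatically annihilated by $\wt{\mf u}$, and therefore is a $\wt{\mf b}^{s}(n)$-highest weight vector in $\wt\Delta(\wt\la)=\text{Ind}^{\DG}_{\wt{\mf p}}L(\wt{\mf l},\wt\la)$ and in its irreducible quotient $\wt L(\wt\la)$. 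Parts (i), (iii), (iv) proceed along the same lines; the only changes are which side is put in the $\diamond$-ordering and the corresponding switch between $(\la^\pm)'_1\le n$ and $\la^\pm_1\le n$, which dictates whether the Frobenius rows or columns are being transported through the reflection sequence. The main obstacle is the inductive bookkeeping across the $n(n-1)$ reflection steps, verifying at each step the dichotomy in \lemref{hwt odd}; however, the $\langle\cdot\rangle$-truncation built into the modified Frobenius coordinates, combined with the partition-size hypotheses, makes this a routine (if lengthy) induction, entirely parallel to the argument in \cite[Lemma 3.2]{CL}.
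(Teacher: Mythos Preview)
Your proposal is correct and follows the same approach as the paper, which simply refers to the parallel arguments in \cite[Lemma~3.2, Corollary~3.3]{CL} and \cite[Proposition~4.3]{CLW} together with \lemref{hwt odd}. One small imprecision: in your sketch of case~(ii), the negative-side reflections do not ``leave $\La^-(\la)$ intact''---the negative part of $\wt\la$ is the Frobenius form $-\sum_{r\in\hf\N}\theta((\la^-)')_r\,\epsilon_{-r+\hf}$, supported on both integer and half-integer indices, and the $\hf n(n-1)$ reflections on that side actively transport it into $\La^-(\la)$ (the hypothesis $(\la^-)'_1\le n$ is precisely what makes the half-integer coefficients vanish after the full sequence), just as the positive-side reflections transport the Frobenius form into $\ov\La^+(\la)$.
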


\section{Super duality} \label{sec:O}

In this section, we first introduce the module categories $\Ot$,
$\Oa$, $\Or$, $\Ol$ and $\Olr$ for the infinite-rank Lie
superalgebras $\wt{\G}$, $\G$, $\SG$, $\G^\diamond$, and
$\SG^\diamond$, respectively. We also introduce functors $T: \Ot
\rightarrow \Oa$, $\ov{T}: \Ot \rightarrow \Or$, $T^\diamond: \Ot
\rightarrow \Ol$ and $\ov{T}^\diamond: \Ot \rightarrow \Olr$, and
then show that they are equivalences of tensor categories which send
parabolic Verma and simple modules to parabolic Verma and simple
modules, respectively.

\subsection{BGG categories}

Let $\Otn$ be the category of $\wt{\G}$-modules $\wt{M}$ such that $\wt{M}$
is a semisimple $\wt{\h}$-module with finite-dimensional weight
subspaces $\wt{M}_\gamma$, $\gamma\in \wt{\h}^*$, satisfying
\begin{itemize}
\item[(i)] $\wt{M}$ decomposes over $\wt{\mf{l}}$ into a direct sum of
$L(\wt{\mf{l}},\wt{\mu})$ for $\mu\in {P^+}$.
\item[(ii)] There exist finitely many weights
$\la_1,\la_2,\ldots,\la_k\in{P^+}$ (depending on $\wt{M}$) such that
if $\gamma$ is a weight in $\wt{M}$, then
$\gamma\in\wt{\la}_i-\sum_{\alpha\in{\wt{\Pi}}}\Z_+\alpha$, for some $i$.
\end{itemize}
The morphisms in $\Otn$ are (not necessarily even)
$\wt{\G}$-homomorphisms. We recall that $\varphi\in {\rm
Hom}_{\Otn}(\wt{M},\wt{N})$ means that $\varphi(xv)=x\varphi(v)$, for $x\in\wt{\G}$ and $v\in\wt{M}$\footnote{We remark that a different definition of homomorphism is $\varphi(xv)=(-1)^{|\varphi|\cdot|x|}x\varphi(v)$. The difference is inessential, as the map $f\rightarrow f^*$ given by $f^*(v):=(-1)^{|f|}f(v)$ provides a bijection between these two types of maps.}.
Let $\Ot$ be the full subcategory of $\Otn$ consisting of the objects
$\wt{M}\in \Otn$ such that $Kv=av$ for all $v\in \wt{M}$. We have
$\Otn=\bigoplus_{a\in\C}\Ot$. The parabolic Verma modules $\wt{\Delta}(\wt{\mu})$ and
irreducible modules $\wt{L}(\wt{\mu})$ for $\mu\in P^+_a$ lie in $\Oa$, by
\lemref{lem:paraverma} below. Analogously we can define the other
four variants $\On$, $\Orn$, $\Oln$ and $\Olrn$ of the above categories for ${\G}$, $\SG$,
$\G^\diamond$, and $\SG^\diamond$ in self-expained notations, which
contain the corresponding parabolic Verma and irreducible modules.

Let ${\wt{\Gamma}}:=\{\mu\in {\wt{\h}}^*\mid \mu({E}_{rr})=0,
\,\,|r|\gg 0 \}$. For
$\wt{V}=\bigoplus_{\wt{\Gamma}}\wt{V}_\gamma\in \Ot$, there is a
natural $\Z_2$-gradation $\wt{V}=\wt{V}_{\bar{0}}\bigoplus
\wt{V}_{\bar{1}}$ given by
\begin{equation}\label{wt-Z2-gradation}
 \wt{V}_{\bar{0}}:=\bigoplus_{\mu\in{\wt{\Gamma}}_{\bar{0}}}\wt{V}_{\mu}\qquad\hbox{and}\qquad
 \wt{V}_{\bar{1}}:=\bigoplus_{\mu\in{\wt{\Gamma}}_{\bar{1}}}\wt{V}_{\mu},
\end{equation}
where ${\wt{\Gamma}}_\epsilon:=\{\mu\in {\wt{\Gamma}}\mid \sum_{r\in
{1/ 2}+\Z}\mu({E}_{rr})\equiv {\epsilon} \,\,(\text{mod }2)\}$
(cf.~\cite[Section 2.5]{CL}). Let $\Pi$ denote the parity change
functor on $\Otn$. We define $\Otb$ to be the full subcategories of
$\Ot$ consisting of objects with $\Z_2$-gradation given by
\eqnref{wt-Z2-gradation} and define $\Ot^{\bar{1}}$ to be the full
subcategories of $\Ot$ consisting of objects $\Pi \wt{M}$ with
$\wt{M}\in \Otb$. Note that the morphisms in $\Otb$ and
$\Ot^{\bar{1}}$ are of degree $\bar{0}$. It is clear that $\Otb$
and $\Ot^{\bar{1}}$ are an abelian categories.

For $\wt{M}=\wt{M}_{\bar{0}} +\wt{M}_{\bar{1}}\in \Otn$, let
$\wt{M}_{\bar{0},\mu}=\wt{M}_{\mu}\cap\wt{M}_{\bar{0}}$ and
$\wt{M}_{\bar{1},\mu}=\wt{M}_{\mu}\cap\wt{M}_{\bar{1}}$. Also we let
$$\wt{M}'_{\bar{0}}:=\bigoplus_{\mu\in{\wt{\Gamma}}_{\bar{0}}}\wt{M}_{\bar{0},\mu},
\quad \wt{M}'_{\bar{1}}:=\bigoplus_{\mu\in{\wt{\Gamma}}_{\bar{1}}}\wt{M}_{\bar{1},\mu};
\qquad \wt{M}''_{\bar{0}}:=\bigoplus_{\mu\in{\wt{\Gamma}}_{\bar{1}}}\wt{M}_{\bar{0},\mu},
\quad \wt{M}''_{\bar{1}}:=\bigoplus_{\mu\in{\wt{\Gamma}}_{\bar{0}}}\wt{M}_{\bar{1},\mu}.$$
It is clear that
$\wt{M}':=\wt{M}'_{\bar{0}}\bigoplus\wt{M}'_{\bar{1}}$ and
$\wt{M}'':=\wt{M}''_{\bar{0}}\bigoplus\wt{M}''_{\bar{1}}$ are
submodules of $\wt{M}$. Since $\Dh$ separates the $\wt{M}_\mu$'s, we
have $\wt{M}_\mu=\wt{M}_{\bar{0},\mu}\bigoplus\wt{M}_{\bar{1},\mu}$
for all $\mu$. Therefore $\wt{M}=\wt{M}'\bigoplus\wt{M}''$. Hence
for every $\wt{M}\in \Otn$, $\wt{M}$ is even isomorphic to
$\wt{N}'\bigoplus\Pi\wt{N}''$ for some $\wt{N}', \wt{N}''\in
\Otn^{\bar{0}}$. For $\wt{N}\in \Otn$, let
$p_{\wt{N}}:\wt{N}\longrightarrow \Pi\wt{N}$ be the parity-reversing isomorphism.
Therefore for every $\wt{M}\in \Otn$, $\wt{M}$ is
isomorphic to $\wt{N}$ for some $\wt{N}\in \Otnb$. This implies that
the kernel and the cokernel of ${\varphi}$ belong to $\Otn$ for
$\wt{M}$, $\wt{N}\in\Otn$, and $\varphi\in{\rm
Hom}_{\Otn}(\wt{M},\wt{N})$. Hence $\Otn$ is an abelian category.
Also $\Otn$ and $\Otnb$ have isomorphic skeletons and hence they are
equivalent categories. Note that for $\wt{M}, \wt{N}\in \Otnb$,
$\varphi\in{\rm Hom}_{\Otn}(\Pi\wt{M},\wt{N})$ and $\varphi'\in{\rm
Hom}_{\Otn}(\wt{M},\Pi\wt{N})$, we have $\varphi=\phi\circ
p_{\Pi\wt{M}}$ and $\varphi'=p_{\wt{N}}\circ\phi'$ for some $\phi,
\phi'\in{\rm Hom}_{\Otn}(\wt{M},\wt{N})$.

Analogously, $\Orb$, $\Olb$ and $\Olrb$ denote the respective full
subcategories of $\Or$, $\Ol$ and $\Olr$ consisting of objects with
$\Z_2$-gradations given by \eqnref{wt-Z2-gradation} such that $r$
are summed over half integers contained in the respective index
sets; they are abelian and are equivalent to $\Or$, $\Ol$ and
$\Olr$, respectively.

Using similar arguments as in the proof of \cite[Lemma 3.1]{CLW}, we
have the following lemma.

\begin{lem}\label{lem:paraverma}
Let $\mu\in P^+_a$. The restrictions to $\wt{\mf{l}}$ of the
$\DG$-modules $\wt{\Delta}(\wt{\mu})$ and $\wt{L}(\wt{\mu})$
decompose into direct sums of $L(\wt{\mf{l}},\wt{\nu})$ for $\nu\in
P^+_a$. In particular, $\wt{\Delta}(\wt{\mu}),
\wt{L}(\wt{\mu})\in\Ot$. Analogous results hold for the other four
variants of categories for $\G$, $\SG$, $\G^\diamond$, and
$\SG^\diamond$.
\end{lem}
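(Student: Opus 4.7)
The plan is to follow the blueprint of \cite[Lemma 3.1]{CLW}: reduce the decomposition question to a statement about $S(\wt{\mf{u}}_-)$ via PBW, decompose $S(\wt{\mf{u}}_-)$ as an $\wt{\mf{l}}$-module via a super Howe-duality computation, and then transfer the decomposition back to $\wt{\Delta}(\wt{\mu})$ and its irreducible quotient $\wt{L}(\wt{\mu})$.

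Concretely, the PBW theorem applied to the parabolic decomposition $\DG=\wt{\mf{u}}_-\oplus\wt{\mf{l}}\oplus\wt{\mf{u}}$ gives an $\wt{\mf{l}}$-module isomorphism $\wt{\Delta}(\wt{\mu})\cong U(\wt{\mf{u}}_-)\otimes L(\wt{\mf{l}},\wt{\mu})$, and the symmetrization map further identifies $U(\wt{\mf{u}}_-)\cong S(\wt{\mf{u}}_-)$ as $\wt{\mf{l}}$-modules. From the Dynkin diagram $D(\DG)$ one sees that $\wt{\mf{l}}$ splits into three natural blocks---a left block on the non-positive (half-)integer part of $\wt{\I}$, a finite block on $\mathbb{K}$ governed by $Y_0$, and a right block on the positive (half-)integer part of $\wt{\I}$---and the nilradical $\wt{\mf{u}}_-$ decomposes as an $\wt{\mf{l}}$-module into tensor products of natural and dual-natural modules connecting these blocks, each an irreducible $\wt{\mf{l}}$-module of the form $L(\wt{\mf{l}},\wt{\nu})$ with $\nu\in P^+_0$. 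A super Howe-duality calculation in the style of \cite[Lemma 3.1]{CLW}, extended to accommodate the middle $\mathbb{K}$-block, then yields an explicit decomposition $S(\wt{\mf{u}}_-)=\bigoplus_\nu L(\wt{\mf{l}},\wt{\nu})$ with indices $\nu\in P^+_0$.

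Tensoring with $L(\wt{\mf{l}},\wt{\mu})$ and decomposing each factor $L(\wt{\mf{l}},\wt{\nu})\otimes L(\wt{\mf{l}},\wt{\mu})$ via classical and super Littlewood--Richardson-type rules on the respective blocks yields the desired decomposition of $\wt{\Delta}(\wt{\mu})$ into summands $L(\wt{\mf{l}},\wt{\eta})$ with $\eta\in P^+_a$; the weight-cone and finite-dimensional weight-space conditions in the definition of $\Ot$ follow from PBW, so $\wt{\Delta}(\wt{\mu})\in\Ot$. Since $\wt{L}(\wt{\mu})$ is a $\DG$-quotient of $\wt{\Delta}(\wt{\mu})$, and any quotient of a semisimple $\wt{\mf{l}}$-module by an $\wt{\mf{l}}$-submodule is again a direct sum of the original irreducible constituents, $\wt{L}(\wt{\mu})$ also lies in $\Ot$ with an analogous decomposition. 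The remaining four variants follow by an identical argument applied to $\G$, $\SG$, $\G^\diamond$, and $\SG^\diamond$ and their respective parabolic nilradicals.

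The main obstacle is the explicit bookkeeping in the Howe-duality decomposition of $S(\wt{\mf{u}}_-)$ together with the verification that all highest weights arising in the subsequent tensor product with $L(\wt{\mf{l}},\wt{\mu})$ land in $P^+_a$. This closely parallels the computation in \cite[Lemma 3.1]{CLW}; the only new ingredient is the middle $\mathbb{K}$-block, which contributes an additional finite type $A$ factor to the relevant Howe dual pair but introduces no essential new difficulty.
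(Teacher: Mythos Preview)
Your proposal is correct and follows essentially the same approach as the paper, which simply defers to the argument of \cite[Lemma 3.1]{CLW}. The PBW reduction to $S(\wt{\mf{u}}_-)\otimes L(\wt{\mf{l}},\wt{\mu})$, the Howe-duality decomposition of $S(\wt{\mf{u}}_-)$, and the passage to the irreducible quotient are exactly the ingredients of that argument, with the additional $\mathbb{K}$-block handled as you indicate.
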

\subsection{The functors $T$, $\ov{T}$, $T^\diamond$ and $\ov{T}^\diamond$}\label{Tfunctors}
\label{sec:T}

We shall introduce four functors $T: \Ot \rightarrow \Oa$, $\ov{T}:
\Ot \rightarrow \Or$, $T^\diamond: \Ot \rightarrow \Ol$ and
$\ov{T}^\diamond: \Ot \rightarrow \Olr$. We will describe in detail
the construction of $T: \Ot \rightarrow \Oa$, and the remaining
three variants can be treated similarly.

By definition, $\G$ is naturally a subalgebra of $\DG$, $\mf l$ is a
subalgebra of $\wt{\mf{l}}$, and $\h$ is a subalgebra of $\wt{\h}$.
Furthermore, we may view ${\h}^*$ as a subspace of $\wt{\h}^*$ by
regarding $\h$ as a direct summand of $\wt{\h}$ with respect to the
natural basis $E_{rr}$'s. Given a semisimple $\wt{\h}$-module
$\wt{M}=\bigoplus_{\gamma\in\wt{\h}^*}\wt{M}_\gamma$, we define
\begin{align*}
T(\wt{M}):= \bigoplus_{\gamma\in{{\h}^*}}\wt{M}_\gamma.
\end{align*}
Note that $T(\wt{M})$ is an $\h$-submodule of $\wt{M}$. It in
addition $\wt{M}=\bigoplus_{\gamma\in\wt{\h}^*}\wt{M}_\gamma$ is an
$\wt{\mf{l}}$-module, then $T(\wt{M})$ is an $\mf l$-submodule of
$\wt{M}$. Furthermore, if
$\wt{M}=\bigoplus_{\gamma\in\wt{\h}^*}\wt{M}_\gamma$ is a
$\DG$-module, then  $T(\wt{M})$ is a $\G$-submodule of $\wt{M}$.

The direct sum decomposition in $\wt{M}$ gives rise to the natural
projection
\begin{eqnarray*}
 T_{\wt{M}} :  \wt{M}  \longrightarrow T(\wt{M})
\end{eqnarray*}
which is an $\h$--module homomorphism. If in addition $\wt{M}$ is an
$\wt{\mf{l}}$-module, then $T_{\wt{M}}$ is also an $\mf l$-module
homomorphism. Furthermore, if $\wt{M}$ is a $\DG$-module, then
$T_{\wt{M}}$ is a $\G$-module homomorphism. If
$\wt{f}:\wt{M}\rightarrow \wt{N}$ is an $\wt{\h}$-homomorphism, then
the following induced map
\begin{eqnarray*}
T[\wt{f}] :  T(\wt{M}) \longrightarrow T(\wt{N})
\end{eqnarray*}
is an $\h$-module homomorphism. If in addition
$\wt{f}:\wt{M}\rightarrow \wt{N}$ is a $\wt{\mf{l}}$-homomorphism,
then $T[\wt{f}] $ is an $\mf l$-module homomorphism. Furthermore, if
$\wt{f}:\wt{M}\rightarrow \wt{N}$ is a ${\wt{\G}}$-homomorphism,
then $T[\wt{f}] $ is a $\G$-module homomorphism.

\begin{lem}\label{T:Levi}
For $\la\in P^+_a$, we have
\begin{eqnarray*}
\CD
&T\big{(}L(\wt{\mf{l}},\wt{\la})\big{)}=L({\mf{l}},{\la}), \qquad
&\ov{T}\big{(}L(\wt{\mf{l}},\wt{\la})\big{)}=L(\ov{\mf{l}},\ov{\la}),\\
&T^\diamond\big{(}L(\wt{\mf{l}},\wt{\la})\big{)}
=L({\mf{l}}^\diamond,{\la}^\diamond), \qquad
&\ov{T}^\diamond\big{(}L(\wt{\mf{l}},\wt{\la})\big{)}
=L(\ov{\mf{l}}^\diamond,\ov{\la}^\diamond).
 \endCD
\end{eqnarray*}
\end{lem}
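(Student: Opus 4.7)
The four identities have parallel proofs; I will describe the argument for $T$ in detail, and the remaining three cases can be handled identically using parts (ii)--(iv) of Proposition~\ref{prop:change} together with the corresponding non-standard Borel subalgebras from Section~\ref{sec:borel}.

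I first pick $n\in\N$ with $(\la^{-})'_{1}\le n$ and $(\la^{+})'_{1}\le n$. By Proposition~\ref{prop:change}(i) applied to $L(\wt{\mf{l}},\wt{\la})$, there is a highest weight vector $v_{\la}$ of weight $\la$ with respect to the Borel $\wt{\mf{b}}^{c}(n)\cap\wt{\mf{l}}$. Inspection of the ordering~\eqnref{order:c} shows that its restriction to $\I=\mathbb{K}\cup\Z$ is the standard total order, so $\wt{\mf{b}}^{c}(n)\cap\mf{l}=\mf{b}_{\mf{l}}$; hence $v_{\la}$ is a $\mf{b}_{\mf{l}}$-highest weight vector in $L(\wt{\mf{l}},\wt{\la})|_{\mf{l}}$, and $v_{\la}\in T(L(\wt{\mf{l}},\wt{\la}))$ because $\la\in\mf{h}^{*}$. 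The cyclic submodule $U(\mf{l})v_{\la}\subseteq T(L(\wt{\mf{l}},\wt{\la}))$ is a highest weight $\mf{l}$-module of highest weight $\la$ and thus a quotient of the Verma module that surjects onto $L(\mf{l},\la)$.

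For the reverse containment, the idea is to reduce via a tensor factorization. The Levi decomposes as
\[
\wt{\mf{l}} \;=\; \wt{\mf{l}}^{-}\,\oplus\, \wt{\mf{l}}^{\mathbb{K}}\,\oplus\, \wt{\mf{l}}^{+}\,\oplus\, \C K, \qquad \mf{l} \;=\; \mf{l}^{-}\,\oplus\, \mf{l}^{\mathbb{K}}\,\oplus\, \mf{l}^{+}\,\oplus\, \C K,
\]
as direct sums of Lie superalgebra ideals, where $\wt{\mf{l}}^{\pm}$ (resp.\ $\mf{l}^{\pm}$) is spanned by the $\widehat{E}_{rs}$ with both $r,s\in\wt{\I}\cap\hf\Z$ (resp.\ $\I$) positive or both non-positive, and $\wt{\mf{l}}^{\mathbb{K}}=\mf{l}^{\mathbb{K}}$ is the $Y_{0}$-Levi inside $\gl_{\mathbb{K}}$. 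Accordingly $L(\wt{\mf{l}},\wt{\la})\cong L^{-}\otimes L^{\mathbb{K}}\otimes L^{+}\otimes \C_{a\La_{0}}$; since the middle two factors carry only integer weights and the four factors' weight supports in $\wt{\h}^{*}$ are disjoint, the functor $T$ acts diagonally, so it suffices to establish $T^{\pm}(L^{\pm})\cong L(\mf{l}^{\pm},\la^{\pm})$.

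The hard part will be this last step, and I plan to settle it by comparing formal characters. The irreducible $\gl_{\infty|\infty}$-module $L(\wt{\mf{l}}^{+},\wt{\la}^{+})$ has character equal to the hook-Schur function $hs_{\la^{+}}(x|y)$ in the variables $x_{j}=e^{\epsilon_{j}}$ ($j\in\N$) and $y_{r}=e^{\epsilon_{r}}$ ($r\in\hf+\Z_{+}$), and the specialization $hs_{\la^{+}}(x|0)=s_{\la^{+}}(x)$ is precisely the Schur character of the $\gl_{\infty}$-irreducible $L(\mf{l}^{+},\la^{+})$. Because $T^{+}$ picks out exactly those weights with vanishing half-integer coefficients, $\chi(T^{+}(L^{+}))=s_{\la^{+}}(x)$. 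Restricting the analysis of the second paragraph to the positive factor produces $U(\mf{l}^{+})v_{\la^{+}}\subseteq T^{+}(L^{+})$ surjecting onto $L(\mf{l}^{+},\la^{+})$, which yields the sandwich
\[
s_{\la^{+}}(x)=\chi(L(\mf{l}^{+},\la^{+}))\;\le\;\chi(U(\mf{l}^{+})v_{\la^{+}})\;\le\;\chi(T^{+}(L^{+}))=s_{\la^{+}}(x).
\]
All three characters therefore coincide, forcing $T^{+}(L^{+})=U(\mf{l}^{+})v_{\la^{+}}\cong L(\mf{l}^{+},\la^{+})$. The negative factor is handled symmetrically, completing the argument.
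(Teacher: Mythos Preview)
Your argument is correct but follows a genuinely different route from the paper's.  After the common first step of locating the singular vector $v_{\la}$ via Proposition~\ref{prop:change}, the paper argues entirely module-theoretically: for any weight vector $w\in\ov{T}(L(\wt{\mf l},\wt\la))$ it chooses $n\gg 0$ so that $w\in U(\DG(n,n)\cap\wt{\mf l})v_{\ov\la}$, and then observes from the Dynkin diagram of $\DG(n,n)$ relative to $\wt{\mf b}^{s}(n)$ that the lowering operators which keep the weight inside $\ov{\h}^{*}$ already lie in $\ov{\mf l}$, so $w\in U(\ov{\mf l})v_{\ov\la}$; irreducibility is then deduced by a similar argument showing that any $\ov{\mf b}_{\mf l}$-singular vector is actually $\wt{\mf b}^{s}(q)$-singular for $q$ large.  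Your approach instead factors the Levi as $\wt{\mf l}^{-}\oplus\wt{\mf l}^{\mathbb K}\oplus\wt{\mf l}^{+}\oplus\C K$ and compares characters via the identity $hs_{\la^{+}}(x\,|\,0)=s_{\la^{+}}(x)$.  The paper's proof is self-contained and treats all four functors with one structural argument; yours is shorter and more explicit on each factor but imports the hook Schur character formula for $\gl_{\infty|\infty}$ as an external ingredient, and on the negative side one should note (as you implicitly do) that the weights of $L^{-}$ are non\emph{positive}, so the ``$y\to 0$'' specialization is to be read as restricting to weights with vanishing half-integer part rather than literally evaluating a polynomial.
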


\begin{proof}
We will prove
$\ov{T}\big{(}L(\wt{\mf{l}},\wt{\la})\big{)}=L(\ov{\mf{l}},\ov{\la})$,
and the other cases can be proved by the same argument. By
\propref{prop:change}, $L(\wt{\mf{l}},\wt{\la})$ contains a
$(\wt{\mf b}^s(n)\cap \ov{\mf{l}})$-highest weight vector
$v_{\ov{\la}}$ of highest weight $\ov{\la}$ for $n \gg 0$.  The
vector $v_{\ov{\la}}$ clearly lies in
$\ov{T}\big{(}L(\wt{\mf{l}},\wt{\la})\big{)}$, and it is a $(\ov{\mf
b}\cap \ov{\mf{l}})$-singular vector since $\ov{\mf{l}}\cap \ov{\mf
b}=\ov{\mf{l}}\cap\wt{\mf b}^s(n)$. Let $w$ be any weight vector in
$\ov{T}\big{(}L(\wt{\mf{l}},\wt{\la})\big{)}$. Taking $n$ large
enough, we can assume that  $w\in U(\DG(n,n)\cap\wt{\mf
l})v_{\ov{\la}}$. Note that the Dynkin diagram of $\DG(n,n)$ with
respect to  the Borel subalgebra
$\mc{B}=\DG(n,n)\cap\wt{\mf{b}}^{s}(n)$ is the following:
\begin{center}
\hskip 3cm \setlength{\unitlength}{0.16in}
\begin{picture}(24,4)
\put(15.25,2){\makebox(0,0)[c]{$\bigotimes$}}
\put(17.8,1.95){\makebox(0,0)[c]{{\ovalBox(1.6,1.2){$\mc{R}_n$}}}}
\put(13.1,2){\line(1,0){1.70}}
\put(15.7,2){\line(1,0){1.25}}
\put(15.5,1){\makebox(0,0)[c]{\tiny $\epsilon_{n-\hf}-\epsilon_1$}}
\put(12.3,2){\makebox(0,0)[c]{{\ovalBox(1.6,1.2){$\ov{\mc{R}}_{n}$}}}}
\put(7.1,1.95){\makebox(0,0)[c]{{\ovalBox(1.4,1.2){$\mc{K}$}}}}
\put(9.5,1){\makebox(0,0)[c]{\tiny $\epsilon_{\ov{k}}-\epsilon_{\hf}$}}
\put(9.7,2){\makebox(0,0)[c]{$\bigotimes$}}
\put(4.7,2){\makebox(0,0)[c]{$\bigcirc$}}
\put(-0.3,2){\makebox(0,0)[c]{$\bigotimes$}}
\put(-2.1,2){\line(1,0){1.3}}
\put(0.1,2){\line(1,0){1.3}}
\put(2.9,2){\line(1,0){1.4}}
\put(5.1,2){\line(1,0){1.3}}
\put(7.8,2){\line(1,0){1.4}}
\put(10.1,2){\line(1,0){1.4}}
\put(4.9,1){\makebox(0,0)[c]{\tiny $\epsilon_0-\epsilon_{\ov{1}}$}}
\put(2.15,2){\makebox(0,0)[c]{{\ovalBox(1.4,1.2){${\mc{L}}_{n}$}}}}
\put(-2.8,1.95){\makebox(0,0)[c]{{\ovalBox(1.4,1.2){${\mc{L}}_{n}^\diamond$}}}}
\put(.2,1){\makebox(0,0)[c]{\tiny $\epsilon_{-\hf}-\epsilon_{1-n}$}}
\end{picture}
\end{center}
Let $\mc{N}_-$ be the opposite nilradical of $\mc{B}\cap
\wt{\mf{l}}$.  Then $w\in U(\mc{N}_-)v_{\ov{\la}}$, and hence we see
that $w\in U(\ov{\mf l})v_{\ov\la}$. This shows that
$\ov{T}\big{(}L(\wt{\mf{l}},\wt{\la})\big{)}$ is a highest weight $
\ov{\mf{l}}$-module with respect to the Borel subalgebra
$\ov{\mf{l}}\cap \ov{\mf{b}}$ of highest weight $\ov{\la}$.

In order to complete the proof we need to show that
$\ov{T}\big{(}L(\wt{\mf{l}},\wt{\la})\big{)}$ is irreducible. Let
$w$ be a $\ov{\mf b}\cap\ov{\mf l}$-singular vector in
$\ov{T}\big{(}L(\wt{\mf{l}},\wt{\la})\big{)}$. Choose $q\gg 0$ with
$q\ge n$ such that $w\in U(\DG(q,q)\cap\ov{\mf{l}})v_{\ov{\la}}$.
Since the root vector corresponding to a simple root in
$\wt{\mf{b}}^{s}(q)$ either commutes with
$U(\DG(q,q)\cap\ov{\mf{l}})$ or annihilates $w$, $w$ is a
$\wt{\mf{b}}^{s}(q)$-singular vector and hence $w$ is a multiple of
$v_{\ov{\la}}$. Therefore
$\ov{T}\big{(}L(\wt{\mf{l}},\wt{\la})\big{)}$ is irreducible.
\end{proof}

\begin{lem}\label{T:tensor}
For $\mu, \la\in P^+_a$, we have
\begin{itemize}
\item[(i)] $T\big{(}L(\wt{\mf{l}},\wt{\mu})\otimes L(\wt{\mf{l}},\wt{\la})\big{)}
=L({\mf{l}},{\mu})\otimes L({\mf{l}},{\la})$,
\item[(ii)] $\ov{T}\big{(}L(\wt{\mf{l}},\wt{\mu})\otimes
 L(\wt{\mf{l}},\wt{\la})\big{)}=L(\ov{\mf{l}},\ov{\mu})\otimes L(\ov{\mf{l}},\ov{\la})$,
\item[(iii)] $T^\diamond\big{(}L(\wt{\mf{l}},\wt{\mu})\otimes L(\wt{\mf{l}},\wt{\la})\big{)}
=L({\mf{l}}^\diamond,{\mu}^\diamond)\otimes L({\mf{l}}^\diamond,{\la}^\diamond)$,
\item[(iv)] $\ov{T}^\diamond\big{(}L(\wt{\mf{l}},\wt{\mu})\otimes L(\wt{\mf{l}},\wt{\la})\big{)}
=L(\ov{\mf{l}}^\diamond,\ov{\mu}^\diamond)\otimes L(\ov{\mf{l}}^\diamond,\ov{\la}^\diamond)$.
\end{itemize}
\end{lem}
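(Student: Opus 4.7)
My plan is to prove (i) and observe that (ii)--(iv) follow by identical reasoning after adjusting index sets. Set $\wt{A}:=L(\wt{\mf l},\wt{\mu})$ and $\wt{B}:=L(\wt{\mf l},\wt{\la})$. The inclusion $L(\mf l,\mu)\otimes L(\mf l,\la)\subseteq T(\wt{A}\otimes\wt{B})$ is immediate: by \lemref{T:Levi}, $T(\wt{A})=L(\mf l,\mu)$ and $T(\wt{B})=L(\mf l,\la)$, and for weight vectors $v\in\wt{A}_{\alpha}$ and $w\in\wt{B}_{\beta}$ with $\alpha,\beta\in\h^{*}$, the vector $v\otimes w$ has weight $\alpha+\beta\in\h^{*}$.

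For the reverse inclusion, I would reduce to the following sign constraint: for every $\nu\in P^{+}$, every weight $\alpha$ of $L(\wt{\mf l},\wt{\nu})$ satisfies $\alpha(\wt{E}_{rr})\geq 0$ for all $r\in\hf+\Z_{+}$ and $\alpha(\wt{E}_{rr})\leq 0$ for all $r\in -\hf-\Z_{+}$. Granted this, if $\alpha$ is a weight of $\wt{A}$ and $\beta$ a weight of $\wt{B}$ with $\alpha+\beta\in\h^{*}$, then at each $r\in\hf+\Z_{+}$ the sum $\alpha(\wt{E}_{rr})+\beta(\wt{E}_{rr})=0$ of two non-negative terms forces both to vanish, and analogously at each $r\in -\hf-\Z_{+}$. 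Hence $\alpha,\beta\in\h^{*}$ and the reverse inclusion follows.

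To prove the sign constraint, I would decompose $\wt{\mf l}$ into mutually commuting ideals $\wt{\mf l}_{-}\oplus\wt{\mf l}_{0}\oplus\wt{\mf l}_{+}$, where $\wt{\mf l}_{\pm}$ is generated by those $\wt{E}_{rs}$ with $r,s\in\hf\Z$ on the same side of $0$, and $\wt{\mf l}_{0}$ acts on the $\mathbb{K}$-positions via $Y_{0}$. Correspondingly, $L(\wt{\mf l},\wt{\nu})\cong L(\wt{\mf l}_{-},\wt{\nu}_{-})\otimes L(\wt{\mf l}_{0},\wt{\nu}_{0})\otimes L(\wt{\mf l}_{+},\wt{\nu}_{+})$. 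Since $\wt{\nu}_{+}=\sum_{r\in\hf\N}\theta(\nu^{+})_{r}\epsilon_{r}$ has non-negative integer entries, I would identify $L(\wt{\mf l}_{+},\wt{\nu}_{+})$ as the polynomial representation of $\wt{\mf l}_{+}\cong\gl(\infty|\infty)$ indexed by the partition $\nu^{+}$ (with respect to the alternating Borel $1/2<1<3/2<2<\cdots$ inherited from $\wt{\I}$), realizable inside a suitable tensor power of the natural module with basis $\{v_{r}:r\in\hf\Z_{>0}\}$; all its weights therefore have non-negative integer entries. A dual argument (via the order reversal on $-\hf\Z_{+}$) shows that $L(\wt{\mf l}_{-},\wt{\nu}_{-})$ has only non-positive integer entries at the negative positions, while $\wt{\mf l}_{0}$ contributes only at $\mathbb{K}$-positions; together these yield the sign constraint.

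The main obstacle will be the clean identification of $L(\wt{\mf l}_{\pm},\wt{\nu}_{\pm})$ as (dual) polynomial representations of $\gl(\infty|\infty)$ with respect to the non-standard alternating Borel, and the resulting sign conditions on weight entries. This is standard once the modified Frobenius coordinates and the Borel conventions are unpacked. For (ii)--(iv), the identical strategy applies with $\hf+\Z_{+}$ and $-\hf-\Z_{+}$ replaced by the subsets of $\wt{\I}$ on which weights are forced to vanish when passing to $\ov{\I}$, $\I^{\diamond}$, or $\ov{\I}^{\diamond}$; in each case the required sign condition at the new positions is again an instance of the polynomial / dual-polynomial structure on the relevant Levi factor.
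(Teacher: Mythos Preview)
Your argument is correct and is essentially the paper's proof with the key step made explicit. The paper obtains the same inclusion from \lemref{T:Levi} and then finishes in one line by ``a comparison of the characters on both sides''; that comparison works precisely because of the sign constraint you isolate (equivalently, because the characters of $L(\wt{\mf l},\wt\nu)$ are polynomials in the positive half-integer variables and in the inverses of the negative half-integer variables), so that extracting the $\h^{*}$-supported part of a product of characters equals the product of the $\h^{*}$-supported parts.
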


\begin{proof}
We will prove $T\big{(}L(\wt{\mf{l}},\wt{\mu})\otimes
L(\wt{\mf{l}},\wt{\la})\big{)} =L({\mf{l}},{\mu})\otimes
L({\mf{l}},{\la})$, and the other cases can be proved by using the
same argument. By \lemref{T:Levi}, we have
$$T\big{(}L(\wt{\mf{l}},\wt{\mu})\otimes
L(\wt{\mf{l}},\wt{\la})\big{)} \supseteq L({\mf{l}},{\mu})\otimes
L({\mf{l}},{\la}).$$ The lemma now follows by a comparison of the
characters on both sides of (i).
\end{proof}

\begin{prop}\label{SD:comp:cat}
The functors $T: \wt{\mc{O}} \rightarrow {\mc{O}}$, $\ov{T}: \wt{\mc{O}} \rightarrow
\ov{\mc{O}}$, $T^\diamond: \wt{\mc{O}} \rightarrow {\mc{O}^\diamond}$ and $\ov{T}^\diamond: \wt{\mc{O}}
\rightarrow \ov{\mc{O}}^\diamond$ are exact.
\end{prop}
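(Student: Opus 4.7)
The plan is to reduce the exactness of all four functors to an elementary observation about weight-space decompositions, together with the compatibility already recorded in \lemref{T:Levi}.

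Recall that $T(\wt M)$ is defined as the direct sum of those $\wt{\h}$-weight subspaces $\wt{M}_\gamma$ for which $\gamma$ lies in the natural inclusion $\h^\ast \hookrightarrow \wt{\h}^\ast$ (obtained by viewing $\h$ as the span of the appropriate subset of the basis $\{\widehat E_{rr}\}$). The three remaining functors $\ov T$, $T^\diamond$, $\ov T^\diamond$ are defined by the entirely analogous projections onto $\ov{\h}^\ast$, $\hd{}^\ast$, $\Shd{}^\ast$, respectively. Thus, as functors between abelian categories, each is constructed by selecting a fixed subset of weight spaces and then using the observation recorded just before \lemref{T:Levi} that this projection is compatible with the relevant subalgebra actions and with morphisms.

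Given a short exact sequence $0\to \wt A \xrightarrow{\wt f} \wt B \xrightarrow{\wt g} \wt C \to 0$ in $\Ot$, I would argue as follows. Every morphism in $\Ot$ is $\wt{\h}$-equivariant, and each object is a semisimple $\wt{\h}$-module by the definition of $\Otn$. Consequently, for each fixed weight $\gamma \in \wt{\h}^\ast$ the induced sequence of finite-dimensional weight spaces
\begin{equation*}
0 \longrightarrow \wt A_\gamma \longrightarrow \wt B_\gamma \longrightarrow \wt C_\gamma \longrightarrow 0
\end{equation*}
is exact. Summing this family of exact sequences over $\gamma$ in the chosen subset of $\wt{\h}^\ast$ (respectively $\h^\ast$, $\ov{\h}^\ast$, $\hd{}^\ast$, $\Shd{}^\ast$) produces the desired exact sequence
\begin{equation*}
0 \longrightarrow T(\wt A) \longrightarrow T(\wt B) \longrightarrow T(\wt C) \longrightarrow 0
\end{equation*}
of vector spaces, and the maps involved are automatically $\G$-module homomorphisms by the earlier discussion. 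The analogous assertions for $\ov T$, $T^\diamond$ and $\ov T^\diamond$ follow verbatim.

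The only nontrivial point to verify is that the intermediate term $T(\wt B)$ (respectively $\ov T(\wt B)$, etc.) actually lies in the target category $\Oa$ (respectively $\Or$, $\Ol$, $\Olr$); once this is in place, the sequence above is genuinely exact in the target category. This verification reduces, via the $\wt{\mf l}$-decomposition of $\wt B$, to \lemref{T:Levi}, which identifies $T\bigl(L(\wt{\mf l},\wt\mu)\bigr)=L(\mf l,\mu)$ (and similarly for the other three functors), together with the obvious fact that the weight truncation condition on $\wt\Pi$ specialises to the truncation condition on $\Pi$. I do not anticipate any real obstacle: the content of the proposition is just that projecting onto a fixed set of weight spaces is an exact operation for semisimple $\wt{\h}$-modules, the only mild bookkeeping being to keep track of the four different choices of weight-selecting subspaces.
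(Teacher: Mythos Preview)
Your overall outline is correct: exactness on the level of weight spaces is trivial, and the substance of the proposition is that $T(\wt M)$ (and its three siblings) lands in the target category. You are also right that condition~(i) of the category $\On$ is handled by \lemref{T:Levi}. The gap is in your treatment of condition~(ii), which you call ``the obvious fact that the weight truncation condition on $\wt\Pi$ specialises to the truncation condition on $\Pi$.'' This is precisely the step that carries all the content, and it is \emph{not} obvious.

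Concretely: condition~(ii) for $\wt M$ gives finitely many ${}_i\wt\zeta$ such that every weight of $\wt M$ lies in some ${}_i\wt\zeta-\Z_+\wt\Pi$. You need finitely many $\mu_j\in P^+$ such that every weight $\nu\in\h^*$ of $T(\wt M)$ lies in $\mu_j-\Z_+\Pi$. The point is that ${}_i\wt\zeta$ itself does not lie in $\h^*$ (it is built from modified Frobenius coordinates, hence involves half-integer $\epsilon$'s), and the cones $\Z_+\wt\Pi$ and $\Z_+\Pi$ are genuinely different. The paper shows that one must replace each ${}_i\zeta=({}_i\zeta^0,{}_i\zeta^-,{}_i\zeta^+)$ by the \emph{entire finite set}
\[
P_i=\bigl\{\,({}_i\zeta^0,\eta,\mu)\ \big|\ \eta,\mu\text{ partitions with }|\eta|=|{}_i\zeta^-|,\ |\mu|=|{}_i\zeta^+|\,\bigr\},
\]
and then argues combinatorially, via box-addition on Young diagrams, that any $\nu\in P^+$ occurring in $T(\wt M)$ satisfies $\nu\le\gamma$ for some $\gamma\in\bigcup_i P_i$ in the $\Pi$-ordering. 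This enlargement from $\{{}_i\zeta\}$ to $\bigcup_i P_i$ is the missing idea in your sketch; without it, there is no reason the single weight ${}_i\zeta$ (or ${}_i\wt\zeta$) dominates $\nu$ in the $\Pi$-ordering.
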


\begin{proof}
In light of Lemma~\ref{T:Levi}, it suffices to show that
if $\wt{M}\in\wt{\mc{O}}$, then $T(\wt{M})$, $\ov{T}(\wt{M})$, $T^\diamond(\wt{M})$, and $\ov{T}^\diamond(\wt{M})$ lie in $\mc{O}$, $\ov{\mc{O}}$, $\mc{O}^\diamond$, and $\ov{\mc{O}}^\diamond$, respectively. We shall only show that $T(\wt{M})\in\mc{O}$, as the argument for the other cases is similar. Also below we will only prove the case $k\ge 1$, as the case $k=0$ is proved by slightly modifying the argument.

It will be convenient to define $\Pi(\mf{k})$, $\Pi(\wt{\mc L})$ and $\Pi(\wt{\mc R})$ to be the sets $\{\alpha_{\ov{1}},\ldots,\alpha_{\ov{k-1}}\}$, $\{\alpha_{-1/2},\alpha_{-1},\ldots\}$, and $\{\alpha_{1/2},\alpha_{1},\ldots\}$, respectively, for the remainder of the proof.

Let $\wt{M}\in\wt{\mc O}$. Then there exists
$_1\wt{\zeta},_2\wt{\zeta},\ldots,_r\wt{\zeta}$, with
$_1\zeta,\ldots,_r\zeta\in P^+$ such that any weight of $\wt{M}$ is
bounded above by some $_i\wt{\zeta}$. Ignoring the level recall that we have
$_j\zeta=(_j\zeta^0,_j\zeta^-,_j\zeta^+)$, where ${}^j\zeta^0=(_j\zeta_1^0,\ldots,_j\zeta^0_k)$, and ${}_j\zeta^+$ and ${}_j\zeta^-$ are
partitions with $|{}_j\zeta^+|=k_j$ and $|{}_j\zeta^-|=l_j$. For each $_j\zeta$, let $P_j$ be
the following finite subset of $\h^*$:
\begin{align*}
P_j:=\{(_j\zeta^0,\eta,\mu)\mid \mu,\eta\in\mc{P}\text{ with }|\mu|={k_j}\text{ and }|\eta|=l_j\}.
\end{align*}
Set $M:=T(\wt{M})$ and  $P(M):=\bigcup_{j=1}^r P_j$.

We claim that given any weight $\nu$ of $M$, there exists $\gamma\in
P(M)$ such that $\gamma-\nu\in\Z_+\Pi$.
It suffices to prove the claim for $\nu\in P^+$. Since $\nu$ is also
a weight of $\wt{M}$, we have $_i\wt{\zeta}-\nu\in\Z_+\wt{\Pi}$,
for some $i$. Thus
\begin{align*}
_i\wt{\zeta}-\nu=q(\ep_{0}-\ep_{\ov{1}})+p(\ep_{\ov{k}}-\ep_{1/2}) +{}\kappa^-+{}\kappa^++{}\kappa^0,
\end{align*}
where $p,q\in\Z_+$,
${}\kappa^0\in\sum_{\alpha\in\Pi(\mf{k})}\Z_+\alpha$,
${}\kappa^-\in\sum_{\beta\in\Pi(\wt{\mc{L}})}\Z_+\beta$, and ${}\kappa^+\in\sum_{\beta\in\Pi(\wt{\mc{R}})}\Z_+\beta$.
This implies that ${}\nu^-$ is a partition of size $l_i+q$ and ${}\nu^+$ is a partition of $k_i+p$, and
hence there exists $\gamma\in P(M)$ such that ${}\nu^+$ and $\nu^-$ are obtained
from the partitions ${}\gamma^+$ and $\gamma^-$ by adding $p$ and $q$ boxes, respectively. For every
such a box of ${}\nu^\pm$, we record the row number in which it was added to ${}\gamma^\pm$ in the
multisets ${}J^\pm$ with $|{}J^+|=p$ and $|{}J^-|=q$. Then we have
\begin{align*}
\nu=\gamma-{}^0\kappa-\sum_{j\in {}J^+}(\epsilon_{\ov{k}}-\epsilon_j)-\sum_{j\in {}J^-}(\epsilon_{-j+1}-\epsilon_{\ov{1}}),
\end{align*}
and hence $\nu<\gamma$. Thus, we conclude that $M\in\mc{O}$.
\end{proof}

\subsection{A character formula}

The following theorem can be regarded as a weak version of super
duality which is to be established in Theorem~\ref{thm:equivalence}.
Using similar arguments as in the proof of \cite[Theorem 4.5]{CLW},
we have the following.

\begin{thm}\label{matching:modules}
Let $\wt{M}\in \Ot$ and $\la\in P^+_a$. If $\wt{M}$ is a highest
weight $\wt{\G}$-module of highest weight $\wt{\la}$, then
$T(\wt{M})$, $\ov{T}(\wt{M})$, $T^\diamond(\wt{M})$ and
$\ov{T}^\diamond(\wt{M})$ are highest weight modules over ${\G}$,
$\ov{\G}$, ${\G}^\diamond$ and $\ov{\G}^\diamond$ of highest weights
$\la$, $\ov{\la}$, ${\la}^\diamond$ and $\ov{\la}^\diamond$,
respectively. Furthermore, we have
\begin{itemize}
\item[(i)]
$T\big{(}\wt{\Delta}(\wt{\la})\big{)}
 =\Delta(\la)$ and $  T\big{(}\wt{L}(\wt{\la})\big{)}=L(\la)$;
\item[(ii)]
 $\ov{T}\big{(}\wt{\Delta}(\wt{\la})\big{)}
 =\ov{\Delta}(\ov{\la})$ and $
 \ov{T}\big{(}\wt{L}(\wt{\la})\big{)}=\ov{L}(\ov{\la})$;
\item[(iii)]
$T^\diamond\big{(}\wt{\Delta}(\wt{\la})\big{)}
 =\Delta^\diamond(\la^\diamond)$
 and $T^\diamond\big{(}\wt{L}(\wt{\la})\big{)}=L^\diamond(\la^\diamond)$;
\item[(iv)]
 $\ov{T}^\diamond\big{(}\wt{\Delta}(\wt{\la})\big{)}
 =\ov{\Delta}^\diamond(\ov{\la}^\diamond)$ and $
 \ov{T}^\diamond\big{(}\wt{L}(\wt{\la})\big{)}=\ov{L}^\diamond(\ov{\la}^\diamond)$.
\end{itemize}
\end{thm}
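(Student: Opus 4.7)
The plan is to follow the proof of \cite[Theorem 4.5]{CLW} essentially verbatim. I spell out the argument for $T$; the other three functors are handled identically, invoking parts (ii)--(iv) of \propref{prop:change} in place of (i). Since any highest weight $\wt{\mf g}$-module $\wt M$ of highest weight $\wt\la$ is a $\wt{\mf g}$-quotient of $\wt\Delta(\wt\la)$, and $\wt L(\wt\la)$ is in particular such a quotient, the exactness of $T$ (\propref{SD:comp:cat}) reduces the entire theorem to the two statements in (i): $T(\wt\Delta(\wt\la))=\Delta(\la)$ and $T(\wt L(\wt\la))=L(\la)$.

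For the Verma identification, I first construct a $\G$-homomorphism $\varphi:\Delta(\la)\to T(\wt\Delta(\wt\la))$. By \propref{prop:change}(i), for $n$ large enough $\wt\Delta(\wt\la)$ contains a $\wt{\mf b}^c(n)$-highest weight vector $v_\la$ of weight $\la$. The restriction to $\I$ of the ordering \eqnref{order:c} is the standard ordering of $\I$, so $\wt{\mf b}^c(n)\cap\G=\mf b$; in particular $\mf u$ lies in the nilradical of $\wt{\mf b}^c(n)$ in $\wt{\mf g}$, and hence $\mf u\cdot v_\la=0$. Since $\la\in\h^*$, also $v_\la\in T(\wt\Delta(\wt\la))$. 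Applying \lemref{T:Levi} to $U(\wt{\mf l})v_\la=L(\wt{\mf l},\wt\la)$ identifies $U(\mf l)v_\la$ as an $\mf l$-submodule of $L(\mf l,\la)$ generated by the $\mf b_{\mf l}$-highest weight vector $v_\la$, so $U(\mf l)v_\la=L(\mf l,\la)$; the universality of parabolic induction then yields $\varphi$. To show $\varphi$ is an isomorphism I compare $\mf l$-characters. By \lemref{lem:paraverma}, if $\wt\Delta(\wt\la)=\bigoplus_{\nu\in P^+_a}c_\nu L(\wt{\mf l},\wt\nu)$ as $\wt{\mf l}$-modules, then \lemref{T:Levi} together with the manifest $\mf l$-exactness of $T$ yields $T(\wt\Delta(\wt\la))=\bigoplus_\nu c_\nu L(\mf l,\nu)$ as $\mf l$-modules, and analogously $\Delta(\la)=\bigoplus_\nu d_\nu L(\mf l,\nu)$. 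A direct combinatorial enumeration of weight multiplicities, along the lines of \cite[Sec.~5]{CL} and the proof of \cite[Thm.~4.5]{CLW}, yields $c_\nu=d_\nu$ for all $\nu\in P^+_a$; together with the surjectivity of $\varphi$ onto $U(\G)v_\la$, this forces $\varphi$ to be an isomorphism.

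For the simple-module identification I mirror the irreducibility step in the proof of \lemref{T:Levi}. Any $\mf b$-singular vector $w\in T(\wt L(\wt\la))$ satisfies $w\in U(\G(q,q))v_\la$ for $q$ sufficiently large; inspecting the Dynkin diagram of $\DG(q,q)$ with respect to $\wt{\mf b}^c(q)$, each root vector for a simple root of $\wt{\mf b}^c(q)$ lying outside $\G$ either commutes with $U(\G(q,q))$ or already annihilates $w$, so $w$ is a $\wt{\mf b}^c(q)$-singular vector in the irreducible $\wt{\mf g}$-module $\wt L(\wt\la)$, forcing $w\propto v_\la$. Thus $T(\wt L(\wt\la))$, being a highest weight $\G$-module of highest weight $\la$ whose $\mf b$-singular vectors are all proportional to $v_\la$, must equal $L(\la)$. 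The main technical obstacle I anticipate is the multiplicity identification $c_\nu=d_\nu$: the additional half-integer indexed root spaces of $\wt{\mf u}_-$ contribute weights that must precisely collapse under projection to $\h^*$ to reproduce the multiplicities coming from $\mf u_-$, and making this collapse explicit is where the Frobenius-coordinate combinatorics underlying the pairing $\wt\la\leftrightarrow\la$ plays the decisive role.
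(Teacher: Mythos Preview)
Your proposal is correct and follows exactly the approach the paper intends: the paper's own ``proof'' of \thmref{matching:modules} is simply the sentence ``Using similar arguments as in the proof of \cite[Theorem~4.5]{CLW}'', and you have faithfully unpacked that reference. Two small imprecisions are worth tightening. First, in the Verma step you assert that ``surjectivity of $\varphi$ onto $U(\G)v_\la$'' together with $c_\nu=d_\nu$ forces $\varphi$ to be an isomorphism; what you actually need is that $T(\wt\Delta(\wt\la))$ itself is generated by $v_\la$, and this is supplied by the same cyclic-vector argument you already invoke from the first half of the proof of \lemref{T:Levi} (apply it at the level of $\wt\G$ rather than $\wt{\mf l}$). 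Second, in the irreducibility step your claim that the linking simple root vectors $\widehat E_{q,1/2}$ and $\widehat E_{-1/2,1-q}$ ``commute with $U(\G(q,q))$'' is false as stated; they commute with $U(\mf n_-\cap\G(q,q))$, which is all you need since $w\in U(\mf n_-\cap\G(q,q))v_\la$.
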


By standard arguments, \thmref{matching:modules} implies the
following character formula.

\begin{thm}\label{character}
Let $\la\in P^+_a$, and write ${\rm ch}L(\la)=\sum_{\mu\in
P^+_a}a_{\mu\la}{\rm ch}\Delta(\mu)$, $a_{\mu\la}\in\Z$.  Then
\begin{itemize}
\item[(i)] ${\rm ch}\wt{L}(\wt{\la})=\sum_{\mu\in
P^+_a}a_{\mu\la}{\rm ch}\wt{\Delta}(\mu^\theta)$,
\item[(ii)] ${\rm ch}\ov{L}(\ov{\la})=\sum_{\mu\in
P^+_a}a_{\mu\la}{\rm ch}\ov{\Delta}(\ov{\mu})$.
\item[(iii)] ${\rm ch}{L}^\diamond({\la}^\diamond)=\sum_{\mu\in
P^+_a}a_{\mu\la}{\rm ch}{\Delta}^\diamond({\mu}^\diamond)$,
\item[(iv)] ${\rm ch}\ov{L}^\diamond(\ov{\la}^\diamond)=\sum_{\mu\in
P^+_a}a_{\mu\la}{\rm ch}\ov{\Delta}^\diamond(\ov{\mu}^\diamond)$.
\end{itemize}
\end{thm}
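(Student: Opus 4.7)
The plan is to invoke the standard BGG-style character inversion, transferring composition multiplicities between the five categories via the exact functors $T$, $\ov{T}$, $T^\diamond$, $\ov{T}^\diamond$ together with the matching properties of parabolic Verma and simple modules established in \thmref{matching:modules}.

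First, in each of the five categories $\Ot$, $\Oa$, $\Or$, $\Ol$, $\Olr$ I would use that the parabolic Verma characters indexed by $P^+_a$ are linearly independent, and that with respect to the natural (locally finite) dominance order on $P^+_a$ the composition-multiplicity matrix $b_{\mu\nu}:=[\Delta(\mu):L(\nu)]$ and its four variants are unitriangular. Hence the coefficients $a_{\mu\la}$ determined by $\text{ch}\,L(\la)=\sum_\mu a_{\mu\la}\,\text{ch}\,\Delta(\mu)$ are precisely the entries of the inverse unitriangular matrix $(b_{\mu\nu})^{-1}$, and the analogous statement holds for the corresponding multiplicity matrices in the remaining four categories.

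Second, by \propref{SD:comp:cat} each of $T$, $\ov{T}$, $T^\diamond$, $\ov{T}^\diamond$ is exact, and by \thmref{matching:modules} each sends parabolic Vermas to parabolic Vermas and simples to simples under the bijection $\wt\la\leftrightarrow\la\leftrightarrow\ov\la\leftrightarrow\la^\diamond\leftrightarrow\ov\la^\diamond$. Exactness combined with this matching of irreducibles yields the chain of equalities
\begin{equation*}
[\wt{\Delta}(\wt\mu):\wt{L}(\wt\nu)]=[\Delta(\mu):L(\nu)]=[\ov{\Delta}(\ov\mu):\ov{L}(\ov\nu)]
=[\Delta^\diamond(\mu^\diamond):L^\diamond(\nu^\diamond)]=[\ov{\Delta}^\diamond(\ov\mu^\diamond):\ov{L}^\diamond(\ov\nu^\diamond)].
\end{equation*}
Therefore all five $b$-matrices coincide, so do their inverses $(a_{\mu\la})$, and rewriting the common inverse relation as a character identity in each of the five categories yields (i)--(iv) simultaneously.

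The main obstacle is making precise the inversion when parabolic Verma modules need not have finite length, so the composition filtrations are only locally finite. This is resolved as in \cite{CLW} by working in the completed Grothendieck group of each category: the dominance order on $P^+_a$ is such that for each fixed $\la$ the coefficients involved in $\text{ch}\,L(\la)$ are indexed by a set on which the unitriangular matrix is locally invertible, so the formal inverse exists and the inversion argument goes through rigorously. Beyond this bookkeeping I do not foresee any conceptual difficulty.
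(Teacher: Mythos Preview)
Your proposal is correct and is precisely the ``standard argument'' the paper alludes to (the paper gives no proof beyond the sentence ``By standard arguments, \thmref{matching:modules} implies the following character formula''). You have simply made explicit the BGG-style inversion that the paper takes for granted: exactness of the functors together with the matching of simples and parabolic Vermas from \thmref{matching:modules} forces the composition-multiplicity matrices in all five categories to coincide, and inverting the common unitriangular matrix in the completed Grothendieck group gives (i)--(iv).
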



%
%
\subsection{Identification of Kazhdan-Lusztig polynomials}\label{sec:homology}

For a module $V$ over a Lie (super)algebra $\mc{G}$ and $j\in\Z_+$,
let $H_j(\mc{G};V)$ denote the $j$th homology group of $\mc{G}$ with
coefficient in $V$. For a precise definition of homology groups of
Lie superalgebras with coefficients in a module and a precise
formula for the boundary operator we refer the reader to \cite{T} or
\cite[Section 4]{CL}.

For $\wt{M}\in\Ot$ we denote by $M=T(\wt{M})$ and
$\ov{M}=\ov{T}(\wt{M})$, $M^\diamond=T^\diamond(\wt{M})$ and
$\ov{M}^\diamond=\ov{T}^\diamond(\wt{M})$. Using similar arguments
as in the proof of \cite[Theorem 4.10]{CLW}, we have the following.

\begin{thm}\label{matching:KL} We have, for $j\ge 0$,
\begin{itemize}
\item[(i)] $T(H_j(\wt{\mf{u}}_-;\wt{M}))\cong
H_j({\mf{u}}_-;{M})$, as $\mf{l}$-modules.
\item[(ii)] $\ov{T}(H_j(\wt{\mf{u}}_-;\wt{M}))\cong
H_j({\ov{\mf{u}}}_-;\ov{M})$, as ${\ov{\mf{l}}}$-modules.
\item[(iii)] $T^\diamond(H_j(\wt{\mf{u}}_-;\wt{M}))\cong
H_j({\mf{u}}_-^\diamond;{M^\diamond})$, as $\mf{l}^\diamond$-modules.
\item[(iv)] $\ov{T}^\diamond(H_j(\wt{\mf{u}}_-;\wt{M}))\cong
H_j({\ov{\mf{u}}}_-^\diamond;\ov{M}^\diamond)$, as ${\ov{\mf{l}}}^\diamond$-modules.
\end{itemize}
\end{thm}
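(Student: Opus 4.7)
The plan is to realise both homology groups via the Chevalley--Eilenberg chain complex and then check that the exact functor $T$ commutes with passage to homology once the chain complexes are identified. Following the strategy of \cite[Theorem 4.10]{CLW}, set $C_j(\wt{\mf u}_-, \wt M) := \Lambda^j(\wt{\mf u}_-) \otimes \wt M$ with the standard Lie superalgebra boundary $d$. This is a complex of $\wt{\mf l}$-modules (since $\wt{\mf l}$ normalises $\wt{\mf u}_-$) with $\wt{\mf h}$-equivariant differential, and its $j$th homology is $H_j(\wt{\mf u}_-; \wt M)$; define $C_\bullet(\mf u_-, M)$ analogously for $H_j(\mf u_-; M)$.

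First I would extend $T$ from $\wt{\mc O}$ to the category of $\wt{\mf h}$-semisimple $\wt{\mf l}$-modules with finite-dimensional weight spaces by the same prescription $V \mapsto \bigoplus_{\gamma \in \mf h^*} V_\gamma$; this extension is still exact and, because $d$ is $\wt{\mf h}$-equivariant, it commutes with $d$ and thus sends $C_\bullet(\wt{\mf u}_-, \wt M)$ to a complex whose $j$th homology is $T(H_j(\wt{\mf u}_-; \wt M))$. The heart of the proof is the identification, as complexes of $\mf l$-modules,
\[
T\bigl(\Lambda^j(\wt{\mf u}_-) \otimes \wt M\bigr) \;\cong\; \Lambda^j(\mf u_-) \otimes T(\wt M).
\]
The inclusion $\supseteq$ is immediate because every weight on the right-hand side already lies in $\mf h^*$. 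For the reverse inclusion, decompose $\wt{\mf u}_- = \mf u_- \oplus \wt{\mf u}_-^\sharp$ as $\mf l$-modules, where $\wt{\mf u}_-^\sharp$ collects root spaces whose roots involve an index in $\hf + \Z$. Using the structure of weights of objects of $\wt{\mc O}$ established in the proof of Proposition~\ref{SD:comp:cat}, together with the explicit form of $\wt{\mf u}_-^\sharp$, one checks that no non-trivial cancellation of half-integer weight components can occur between $\Lambda^j(\wt{\mf u}_-^\sharp)$-contributions and the half-integer part of the weight of a vector in $\wt M$; consequently a pure tensor can have weight in $\mf h^*$ only if each of its tensor factors does.

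The theorem now follows: since the extended $T$ is exact and commutes with $d$,
\[
T\bigl(H_j(\wt{\mf u}_-; \wt M)\bigr) = H_j\bigl(T(C_\bullet(\wt{\mf u}_-, \wt M))\bigr) \cong H_j(C_\bullet(\mf u_-, M)) = H_j(\mf u_-; M)
\]
as $\mf l$-modules, proving (i). Parts (ii)--(iv) are established in entirely parallel fashion with $T$, $\mf l$, $\mf u_-$ replaced by their $\SG$-, $\G^\diamond$-, $\SG^\diamond$-counterparts, the weight-structure analysis being the same in each case.

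The main technical obstacle is the chain-level identification $T(\Lambda^j(\wt{\mf u}_-) \otimes \wt M) \cong \Lambda^j(\mf u_-) \otimes T(\wt M)$, because weight-space truncation does not commute with tensor products in general. What rescues the argument here is the very specific form of weights of modules in $\wt{\mc O}$ together with the precise structure of the ``extra'' root spaces $\wt{\mf u}_-^\sharp$, and it is exactly at this point that the adaptation from \cite[Theorem 4.10]{CLW} requires genuine verification rather than a formal check.
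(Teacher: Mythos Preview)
Your overall strategy is correct and is precisely the one the paper has in mind (the paper simply cites \cite[Theorem 4.10]{CLW}). You correctly locate the heart of the argument in the chain-level identification $T(\Lambda^j(\wt{\mf u}_-)\otimes\wt M)\cong\Lambda^j(\mf u_-)\otimes T(\wt M)$, and once this holds the rest follows formally from exactness of $T$ and $\wt{\mf h}$-equivariance of $d$.

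One point deserves sharpening. Your ``no cancellation'' claim is true, but the reason is not the weight estimates from the proof of Proposition~\ref{SD:comp:cat}; rather it is a \emph{sign constraint} coming from condition~(i) in the definition of $\Otn$. Since every $\wt M\in\Otn$ decomposes over $\wt{\mf l}$ into summands $L(\wt{\mf l},\wt\mu)$, and each such summand is a polynomial representation on the positive tail and a dual-polynomial representation on the non-positive tail, every weight $\beta$ of $\wt M$ satisfies $\beta(\widehat E_{ss})\ge 0$ for $s\in\hf+\Z_+$ and $\beta(\widehat E_{ss})\le 0$ for $s\in-\hf-\Z_+$. The same inequalities hold for every weight $\alpha$ of $\Lambda^j(\wt{\mf u}_-)$, because each root of $\wt{\mf u}_-$ is of the form $\epsilon_s-\epsilon_r$ with $r$ on the non-positive side and $s$ on the positive side. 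Hence $\alpha_s+\beta_s=0$ forces $\alpha_s=\beta_s=0$ for every half-integer $s$, which is exactly the no-cancellation statement you need. This is essentially the mechanism behind Lemma~\ref{T:tensor} as well, so your argument and the $\wt{\mf l}$-module-decomposition route used in \cite{CL,CLW} are two packagings of the same positivity fact.
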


Setting $\wt{M}=\wt{L}(\wt{\la})$ in \thmref{matching:KL} and using
\thmref{matching:modules} we obtain the following.

\begin{cor}\label{matching:KL1}
For $\la\in P^+_a$ and $j\ge 0$, we have
\begin{itemize}
\item[(i)]
$T\big{(}H_j(\wt{\mf{u}}_-;\wt{L}(\wt{\la}))\big{)}\cong
H_j({\mf{u}}_-;L(\la))$, as $\mf{l}$-modules.

\item[(ii)]
$\ov{T}\big{(}H_j(\wt{\mf{u}}_-;\wt{L}(\wt{\la}))\big{)}\cong
H_j({\ov{\mf{u}}}_-;\ov{L}(\ov{\la}))$, as
${\ov{\mf{l}}}$-modules.

\item[(iii)]
$T^\diamond\big{(}H_j(\wt{\mf{u}}_-;\wt{L}(\wt{\la}))\big{)}\cong
H_j({\mf{u}}_-^\diamond;{L^\diamond}({\la}^\diamond))$, as
$\mf{l}^\diamond$-modules.
\item[(iv)] $\ov{T}^\diamond\big{(}H_j(\wt{\mf{u}}_-;\wt{L}(\wt{\la}))\big{)}\cong
H_j({\ov{\mf{u}}}_-^\diamond;\ov{L}^\diamond(\ov{\la}^\diamond))$,
as ${\ov{\mf{l}}}^\diamond$-modules.
\end{itemize}
\end{cor}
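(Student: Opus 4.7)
The plan is to derive this directly by specializing \thmref{matching:KL} at $\wt{M} = \wt{L}(\wt{\la})$ and then identifying the images of $\wt{L}(\wt{\la})$ under the four functors via \thmref{matching:modules}. Concretely, I will observe that \thmref{matching:modules} places $\wt{L}(\wt{\la})$ in $\wt{\mc{O}}_a$ and gives the identifications
\begin{align*}
T\big(\wt{L}(\wt{\la})\big) &= L(\la), & \ov{T}\big(\wt{L}(\wt{\la})\big) &= \ov{L}(\ov{\la}),\\
T^\diamond\big(\wt{L}(\wt{\la})\big) &= L^\diamond(\la^\diamond), & \ov{T}^\diamond\big(\wt{L}(\wt{\la})\big) &= \ov{L}^\diamond(\ov{\la}^\diamond).
\end{align*}
So the module $M$ appearing in \thmref{matching:KL}(i) is exactly $L(\la)$ when $\wt{M}=\wt{L}(\wt{\la})$, and similarly for the three bar/diamond variants.

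Applying \thmref{matching:KL} with this choice of $\wt{M}$ then yields, for each $j\ge 0$, the four isomorphisms claimed in the corollary, and these isomorphisms are of $\mf{l}$-, $\ov{\mf{l}}$-, $\mf{l}^\diamond$-, and $\ov{\mf{l}}^\diamond$-modules respectively, precisely because the Levi-module structures in \thmref{matching:KL} are already asserted. No further work is required: each of the four parts is a one-line deduction.

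There is no genuine obstacle here: all the heavy lifting has been done upstream. The character-theoretic identifications of $T(\wt{L}(\wt{\la}))$, etc., with the simple modules $L(\la)$, $\ov L(\ov\la)$, $L^\diamond(\la^\diamond)$, $\ov L^\diamond(\ov\la^\diamond)$ were verified in \thmref{matching:modules}, and the homological compatibility of each functor with $H_j(\wt{\mf u}_-;-)$ was verified in \thmref{matching:KL}. The only thing to be careful about is that $\wt{L}(\wt{\la})$ does lie in $\wt{\mc{O}}_a$ so that \thmref{matching:KL} may be applied; this is guaranteed by \lemref{lem:paraverma}. The corollary is then immediate by combining the two theorems.
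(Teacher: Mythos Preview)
Your proposal is correct and matches the paper's own argument exactly: the paper states the corollary as an immediate consequence of setting $\wt{M}=\wt{L}(\wt{\la})$ in \thmref{matching:KL} and invoking \thmref{matching:modules} to identify $T(\wt{L}(\wt{\la}))=L(\la)$ and its three variants. Your extra remark that \lemref{lem:paraverma} ensures $\wt{L}(\wt{\la})\in\Ot$ is a harmless clarification.
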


We define the {\em parabolic Kazhdan-Lusztig polynomials} for the
categories $\Ot$, $\Oa$, $\Or$, $\Ol$ and $\Olr$ associated to
$\mu,\la\in P^+_a$ by letting
\begin{align*}
&\wt{\ell}_{\wt{\mu}\wt{\la}}(q)
 :=\sum_{n=0}^\infty\text{dim}_\C\Big{(}\text{Hom}_{\wt{\mf{l}}}\big{[}
 L(\wt{\mf{l}},\wt{\mu}),
 H_j\big{(}{\wt{\mf{u}}}_-;{L}(\wt{\la})\big{)} \big{]}\Big{)} (-q)^{-j},\\
&{\ell}_{\mu\la}(q) :=\sum_{j=0}^\infty\text{dim}_\C
\Big{(}\text{Hom}_{{\mf{l}}}\big{[} L({\mf{l}},\mu),
H_j\big{(}{{\mf{u}}}_-;{L}(\la)\big{)} \big{]}\Big{)} (-q)^{-j},\\
&\ov{\ell}_{\ov{\mu}\ov{\la}}(q)
 :=\sum_{j=0}^\infty\text{dim}_\C\Big{(}\text{Hom}_{\ov{\mf{l}}}\big{[}
 L(\ov{\mf{l}},\ov{\mu}),
 H_j\big{(}{\ov{\mf{u}}}_-;\ov{L}(\ov{\la})\big{)} \big{]}\Big{)} (-q)^{-j},\\
&{\ell}^\diamond_{\mu^\diamond\la^\diamond}(q) :=\sum_{j=0}^\infty\text{dim}_\C
\Big{(}\text{Hom}_{{\mf{l}}^\diamond}\big{[} L({\mf{l}^\diamond},\mu^\diamond),
H_j\big{(}{{\mf{u}}}^\diamond_-;{L}^\diamond(\la^\diamond)\big{)} \big{]}\Big{)} (-q)^{-j},\\
&\ov{\ell}^\diamond_{\ov{\mu}^\diamond\ov{\la}^\diamond}(q)
 :=\sum_{j=0}^\infty\text{dim}_\C\Big{(}\text{Hom}_{\ov{\mf{l}}^\diamond}\big{[}
 L(\ov{\mf{l}}^\diamond,\ov{\mu}^\diamond),
 H_j\big{(}{\ov{\mf{u}}}_-^\diamond;\ov{L}^\diamond(\ov{\la}^\diamond)\big{)}
  \big{]}\Big{)} (-q)^{-j}.
\end{align*}

By Vogan's homological interpretation of the Kazhdan-Lusztig
polynomials \cite[Conjecture 3.4]{V} and the Kazhdan-Lusztig
conjecture \cite{KL} (proved in \cite{BB, BK}), $\ell_{\mu\la}(q)$
coincides with the original definition in the setting of semisimple
Lie algebras and moreover $\ell_{\mu\la} (1) =a_{\mu\la}$ (cf.
\thmref{character}). The following reformulation of
\corref{matching:KL1}
 is a generalization of \thmref{character}.

\begin{thm}   \label{matching:KLpol}
For $\la,\mu\in P^+_a$, we have the following identification of
Kazhdan-Lusztig polynomials for the categories $\Ot$, $\Oa$, $\Or$,
$\Ol$ and $\Olr$:
$$
\wt{\ell}_{\wt{\mu}\wt{\la}}(q) ={\ell}_{\mu\la}(q) =
\ov{\ell}_{\ov{\mu}\ov{\la}}(q)={\ell}^\diamond_{\mu^\diamond\la^\diamond}(q)
=\ov{\ell}^\diamond_{\ov{\mu}^\diamond\ov{\la}^\diamond}(q).$$
\end{thm}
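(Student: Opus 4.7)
The plan is to deduce \thmref{matching:KLpol} directly from \corref{matching:KL1} by reinterpreting the dimensions of $\Hom$-spaces as multiplicities in a Levi-isotypic decomposition. I will spell out the argument for the identity $\wt{\ell}_{\wt{\mu}\wt{\la}}(q)={\ell}_{\mu\la}(q)$; the remaining three equalities are obtained by replacing $T$ with $\ov{T}$, $T^\diamond$, $\ov{T}^\diamond$ and invoking parts (ii), (iii), (iv) of \corref{matching:KL1} in place of (i).

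First I would note that, by \lemref{lem:paraverma}, $\wt{L}(\wt{\la})$ decomposes as an $\wt{\mf{l}}$-module into a direct sum of highest weight simples $L(\wt{\mf{l}},\wt{\nu})$ with $\nu\in P^+_a$. The Chevalley--Eilenberg complex $\Lambda^{\bullet}(\wt{\mf{u}}_-)\otimes \wt{L}(\wt{\la})$ used to compute $H_j(\wt{\mf{u}}_-;\wt{L}(\wt{\la}))$ is an $\wt{\mf{l}}$-module whose weights lie in a union of translates of $-\sum_{\alpha\in\wt{\Pi}}\Z_+\alpha$ shifted by finitely many $\wt{\nu}$'s; standard arguments (parallel to \cite[Section~4]{CL} and \cite[Theorem~4.10]{CLW}) show that each $H_j(\wt{\mf{u}}_-;\wt{L}(\wt{\la}))$ is a semisimple $\wt{\mf{l}}$-module whose simple constituents are of the form $L(\wt{\mf{l}},\wt{\nu})$ for $\nu\in P^+_a$. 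Writing
\begin{equation*}
H_j(\wt{\mf{u}}_-;\wt{L}(\wt{\la}))\;\cong\;\bigoplus_{\nu\in P^+_a}L(\wt{\mf{l}},\wt{\nu})^{\oplus m^j_{\nu,\la}},
\end{equation*}
one reads off
\begin{equation*}
\dim_{\C}\Hom_{\wt{\mf{l}}}\bigl(L(\wt{\mf{l}},\wt{\mu}),H_j(\wt{\mf{u}}_-;\wt{L}(\wt{\la}))\bigr)=m^j_{\mu,\la},
\end{equation*}
so that $\wt{\ell}_{\wt{\mu}\wt{\la}}(q)=\sum_j m^j_{\mu,\la}(-q)^{-j}$.

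Next I would apply the functor $T$. By \lemref{T:Levi}, $T\bigl(L(\wt{\mf{l}},\wt{\nu})\bigr)=L(\mf{l},\nu)$, and by \propref{SD:comp:cat} the functor $T$ is exact and respects the direct sum decomposition (since it is just the projection onto the $\h^*$-weight subspaces and the map $\wt{\nu}\leftrightarrow \nu$ is a bijection $P^+_a\leftrightarrow P^+_a$ identifying weight spaces appropriately). Combining this with \corref{matching:KL1}(i) yields
\begin{equation*}
H_j({\mf{u}}_-;L(\la))\;\cong\;T\!\left(\bigoplus_{\nu\in P^+_a}L(\wt{\mf{l}},\wt{\nu})^{\oplus m^j_{\nu,\la}}\right)\;\cong\;\bigoplus_{\nu\in P^+_a}L({\mf{l}},\nu)^{\oplus m^j_{\nu,\la}}.
\end{equation*}
Since the simples $L(\mf{l},\nu)$ with distinct $\nu\in P^+_a$ are pairwise non-isomorphic, we again read off
\begin{equation*}
\dim_{\C}\Hom_{\mf{l}}\bigl(L({\mf{l}},\mu),H_j({\mf{u}}_-;L(\la))\bigr)=m^j_{\mu,\la},
\end{equation*}
giving $\ell_{\mu\la}(q)=\sum_j m^j_{\mu,\la}(-q)^{-j}=\wt{\ell}_{\wt{\mu}\wt{\la}}(q)$.

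The only non-routine step is the semisimplicity and the control on which $L(\wt{\mf{l}},\wt{\nu})$'s can appear in the homology groups; I expect this to be the main obstacle, but it is handled in the same spirit as the corresponding argument in \cite[Theorem~4.10]{CLW} and follows from the category axioms for $\Ot$ together with \lemref{lem:paraverma}. Running the identical argument for $\ov{T}$, $T^\diamond$, $\ov{T}^\diamond$ in place of $T$, using \lemref{T:Levi} and the appropriate part of \corref{matching:KL1}, produces
\begin{equation*}
\wt{\ell}_{\wt{\mu}\wt{\la}}(q)={\ell}_{\mu\la}(q)=\ov{\ell}_{\ov{\mu}\ov{\la}}(q)={\ell}^{\diamond}_{\mu^{\diamond}\la^{\diamond}}(q)=\ov{\ell}^{\diamond}_{\ov{\mu}^{\diamond}\ov{\la}^{\diamond}}(q),
\end{equation*}
completing the proof.
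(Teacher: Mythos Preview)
Your proposal is correct and follows essentially the same approach as the paper: the paper simply states that \thmref{matching:KLpol} is ``a reformulation of \corref{matching:KL1}'' without spelling out any details, and your argument makes explicit exactly what that reformulation amounts to, namely that the $\Hom$-dimensions in the defining sums are the Levi-isotypic multiplicities $m^j_{\nu,\la}$, which are preserved by $T$ (and its variants) thanks to \lemref{T:Levi} and \corref{matching:KL1}.
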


\subsection{Super duality} \label{sec:category}

\begin{lem}\label{g-extension} Let
\begin{equation*}
 0\longrightarrow M'\longrightarrow
M{\longrightarrow}M''\longrightarrow 0
\end{equation*}
be an exact sequence of $\DG$ (respectively $\G$, $\SG$, $\Gd$ and
$\SGd$)-modules which are semisimple over $\Dh$ (respectively $\h$,
$\Sh$, $\h^\diamond$ and $\Sh^\diamond$)  such that $M', M''\in \Ot$
(respectively $\Oa$, $\Or$, $\Ol$ and $\Olr$). Then $M$ also belongs
to $\Ot$ (respectively $\Oa$, $\Or$, $\Ol$ and $\Olr$).
\end{lem}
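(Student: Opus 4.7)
My plan is to verify the defining conditions of $\Ot$ for the middle term $M$, treating $\Ot$ in detail since the arguments for $\Oa$, $\Or$, $\Ol$, $\Olr$ are entirely analogous. The level condition $Kv=av$ is inherited from the exact sequence. The boundedness condition (ii) in the definition of $\Otn$ follows from the fact that every weight of $M$ is a weight of $M'$ or $M''$: one takes the union of the two finite bounding sets in $P^+$. Finite-dimensionality of the weight spaces $M_\gamma$ comes from exactness at each weight, using that $M$ is $\Dh$-semisimple by hypothesis: $\dim M_\gamma \le \dim M'_\gamma + \dim M''_\gamma < \infty$.

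The substantive step is condition (i): that $M$ decomposes over $\wt{\mf{l}}$ as a direct sum of modules $L(\wt{\mf{l}}, \wt{\mu})$ with $\mu \in P^+$. This amounts to splitting the $\wt{\mf{l}}$-exact sequence $0 \to M' \to M \to M'' \to 0$. I would do this by lifting $\wt{\mf{l}}$-primitive vectors from $M''$ to $M$. Given a primitive $v'' \in M''_{\wt{\mu}}$ (with $\mu \in P^+$), fix a lift $\tilde v \in M_{\wt{\mu}}$. Then $e_\alpha \tilde v \in M'_{\wt{\mu} + \alpha}$ for each positive simple root $\alpha$ of $\wt{\mf{l}}$, and since $M'$ is $\wt{\mf{l}}$-semisimple of the prescribed type, I aim to modify $\tilde v$ by an element of $M'_{\wt{\mu}}$ so that all $e_\alpha \tilde v$ vanish. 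The resulting true primitive vector generates a submodule of $M$ which, by the dominance of $\wt{\mu}$, is isomorphic to $L(\wt{\mf{l}}, \wt{\mu})$ and projects isomorphically onto the copy generated by $v''$ in $M''$. Iterating (or equivalently invoking Zorn's lemma on families of compatible lifts) produces the required $\wt{\mf{l}}$-direct sum decomposition.

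The main obstacle is showing that such simultaneous modifications exist, which is equivalent to the vanishing of $\mathrm{Ext}^1_{\wt{\mf{l}}}(L(\wt{\mf{l}}, \wt{\nu}), L(\wt{\mf{l}}, \wt{\mu}))$ in the category of $\Dh$-semisimple $\wt{\mf{l}}$-modules, for all $\mu, \nu \in P^+$. The Levi $\wt{\mf{l}}$ decomposes into a finite-type block attached to $Y_0 \subseteq \mathbb{K}$ together with two infinite-rank $\gl$-superalgebra blocks indexed by $\hf\Z_{\le 0}\cap \wt{\I}$ and $\hf\Z_{>0}\cap \wt{\I}$, and $L(\wt{\mf{l}}, \wt{\mu})$ factors as a tensor product along this decomposition. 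On the finite block the condition \eqref{dominant} makes the irreducibles finite-dimensional over a reductive Lie algebra, so that factor is automatically semisimple. On each infinite block, the modified Frobenius coordinates appearing in $\wt{\mu}$ render the highest weight dominant integral, placing $L(\wt{\mf{l}}, \wt{\mu})$ in the semisimple subcategory of integrable highest weight representations. Hence the $\mathrm{Ext}$-vanishing holds block by block, completing the proof; the remaining cases of $\Oa$, $\Or$, $\Ol$, $\Olr$ follow in the same manner.
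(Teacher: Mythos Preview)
Your overall architecture is correct and matches what the paper does: the level, weight-boundedness, and finite weight-space conditions are immediate, and the real content is the $\wt{\mf l}$-semisimplicity of $M$, which you correctly reduce to the vanishing of $\mathrm{Ext}^1_{\wt{\mf l}}\big(L(\wt{\mf l},\wt\nu),L(\wt{\mf l},\wt\mu)\big)$ and then analyze block by block along the decomposition of $\wt{\mf l}$ into the finite $Y_0$-piece and the two infinite tails.

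The gap is in your justification for the two infinite super-blocks. These tails of $\wt{\mf l}$ are genuine Lie \emph{super}algebras (of type $\gl(\infty|\infty)$ with all simple roots odd isotropic), and for Lie superalgebras the implication ``dominant integral / integrable $\Rightarrow$ complete reducibility'' fails: already finite-dimensional $\gl(m|n)$-modules are all integrable yet form a non-semisimple category. So the sentence ``placing $L(\wt{\mf l},\wt\mu)$ in the semisimple subcategory of integrable highest weight representations'' is asserting exactly the point that needs proof. What is actually true, and what the paper invokes by citing \cite[Theorems~3.1 and 3.2]{CK}, is that the specific highest weights arising from modified Frobenius coordinates parametrize the \emph{polynomial} (tensor) modules of these infinite-rank superalgebras, and for this restricted class one does have $\mathrm{Ext}^1$-vanishing; the argument there goes through Howe duality rather than any general integrability principle. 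Note also that the paper singles out $\Oa$ as ``clear'' precisely because $\G$ and hence $\mf l$ are honest Lie algebras, where your integrability reasoning \emph{is} valid; it is only the four super cases $\Ot$, $\Or$, $\Ol$, $\Olr$ that require the extra input from \cite{CK}.
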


\begin{proof}
The statement for $\Oa$ is clear.  The statements for the other
cases follow by arguments, for example, as for \cite[Theorems 3.1
and 3.2]{CK}.
\end{proof}

The arguments in \cite[Section 5]{CLW} can be adapted to prove the
following theorem.
\begin{thm} [Super Duality] \label{thm:equivalence}
The functors $T:\Ot\rightarrow{\Oa}$, $\ov{T}: \Ot \rightarrow \Or$,
$T^\diamond: \Ot \rightarrow \Ol$ and $\ov{T}^\diamond: \Ot
\rightarrow \Olr$ are equivalences of categories.
\end{thm}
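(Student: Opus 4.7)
The plan is to adapt the arguments of \cite[Section 5]{CLW} uniformly to all four functors. Since $T$, $\ov{T}$, $T^\diamond$, and $\ov{T}^\diamond$ are defined by the same formal procedure --- projection onto the weight spaces indexed by the relevant subset of $\wt{\I}$ --- and since all essential inputs (\propref{SD:comp:cat}, \thmref{matching:modules}, \thmref{matching:KL}, and \lemref{g-extension}) have been established in parallel for the four settings, the proofs proceed identically with only notational changes. I will therefore describe the strategy for $T: \Ot \rightarrow \Oa$ and note that the other three cases are completely analogous.

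First I would verify essential surjectivity. By \thmref{matching:modules}, every parabolic Verma module $\Delta(\la)$ and every irreducible $L(\la)$ with $\la \in P^+_a$ lies in the essential image of $T$, via $\wt{\Delta}(\wt{\la})$ and $\wt{L}(\wt{\la})$ respectively. Using the exactness of $T$ (\propref{SD:comp:cat}) together with \lemref{g-extension}, the essential image is closed under extensions, so modules admitting a finite parabolic Verma flag lift. For a general $M \in \Oa$, one exhausts $M$ by an ascending chain of submodules whose subquotients lift to $\Ot$, and assembles the lifts into a module $\wt{M} \in \Ot$ compatibly. Care must be taken that the resulting $\wt{M}$ satisfies condition (ii) defining $\Ot$; this is where the weight-support analysis carried out in the proof of \propref{SD:comp:cat} is needed, now run in the reverse direction.

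Next I would establish full faithfulness. Given $\wt{M}, \wt{N} \in \Ot$, injectivity of the natural map
\[
T_{\wt{M},\wt{N}}\colon \mathrm{Hom}_{\Ot}(\wt{M},\wt{N}) \longrightarrow \mathrm{Hom}_{\Oa}(T\wt{M}, T\wt{N})
\]
follows from the fact that a $\DG$-homomorphism is determined by its restriction to any weight-generating set, and the odd-reflection analysis of Section 3 (in particular \propref{prop:change}) shows that the $\h$-weight components of $\wt{M}$ generate $\wt{M}$ over $\DG$. For surjectivity, one lifts a $\G$-homomorphism $f\colon T\wt{M} \to T\wt{N}$ first when $\wt{M}$ is a parabolic Verma module (using the universal property together with \thmref{matching:modules}), then for modules with parabolic Verma flags, and finally in general by a limit argument. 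The matching of $\mf{u}$-homology in \thmref{matching:KL} is what guarantees that the relevant $\mathrm{Ext}$ groups on the two sides agree, which is needed to patch the lifts across short exact sequences.

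The main technical obstacle will be the essential surjectivity step for modules of infinite length. Since $\Oa$ is not artinian, one cannot induct on composition length alone; rather one must combine induction on the dominance order of highest weights with a truncation-by-weight-support filtration, and then verify that the resulting filtered limit remains an object of $\Ot$. This truncation argument is the technical core of \cite[Section 5]{CLW}, and it carries over verbatim to each of the four functors here precisely because the crucial bound describing how the functor transforms weight supports (established in the proof of \propref{SD:comp:cat}) applies equally, and symmetrically, to $T$, $\ov{T}$, $T^\diamond$, and $\ov{T}^\diamond$.
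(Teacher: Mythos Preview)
Your proposal is correct and takes essentially the same approach as the paper: the paper's own proof consists of the single sentence ``The arguments in \cite[Section 5]{CLW} can be adapted to prove the following theorem,'' and your outline is a faithful sketch of what that adaptation entails, invoking precisely the inputs (\propref{SD:comp:cat}, \thmref{matching:modules}, \thmref{matching:KL}, \lemref{g-extension}) that make the transfer possible. One small clarification: \lemref{g-extension} is not itself what shows the essential image is closed under extensions---that step requires the $\mathrm{Ext}^1$ matching you mention later---but rather it is what ensures that a lifted extension, once constructed as a $\DG$-module, actually lies in $\Ot$.
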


\begin{rem}
In an extreme yet interesting case, the Lie superalgebra $D(\DG)$ in
Section~\ref{sec:glhat}  (when $k=0$) is associated to the following
Dynkin diagram with all simple roots being odd; its subalgebra
$D(\G)$ and $D(\SG^\diamond)$ in Section~\ref{subalgebras} are Lie
algebras associated to the following Dynkin diagrams and fundamental
systems:
\begin{center}
\hskip -1cm \setlength{\unitlength}{0.16in}
\begin{picture}(24,3)
\put(-3,2){\makebox(0,0)[c]{$D(\DG)$:}}
\put(5.6,2){\makebox(0,0)[c]{$\bigotimes$}}
\put(8,2){\makebox(0,0)[c]{$\bigotimes$}}
\put(10.4,2){\makebox(0,0)[c]{$\bigotimes$}}
\put(12.8,2){\makebox(0,0)[c]{$\bigotimes$}}
\put(15.2,2){\makebox(0,0)[c]{$\bigotimes$}}
\put(17.6,2){\makebox(0,0)[c]{$\bigotimes$}}
\put(20,2){\makebox(0,0)[c]{$\bigotimes$}}
\put(3.65,2){\line(1,0){1.5}}
\put(6,2){\line(1,0){1.5}}\put(8.4,2){\line(1,0){1.5}}
\put(10.8,2){\line(1,0){1.5}} \put(13.2,2){\line(1,0){1.5}}
\put(15.6,2){\line(1,0){1.5}} \put(18,2){\line(1,0){1.5}}
\put(20.3,2){\line(1,0){1.5}}
\put(2.7,1.95){\makebox(0,0)[c]{$\cdots$}}
\put(23,1.95){\makebox(0,0)[c]{$\cdots$}}
\put(5.4,1){\makebox(0,0)[c]{\tiny $\alpha_{-\frac{3}{2}}$}}
\put(8,1){\makebox(0,0)[c]{\tiny $\alpha_{-1}$}}
\put(10.3,1){\makebox(0,0)[c]{\tiny $\alpha_{-\hf}$}}
\put(12.8,1){\makebox(0,0)[c]{\tiny $\alpha_{0}$}}
\put(15.2,1){\makebox(0,0)[c]{\tiny $\alpha_{\hf}$}}
\put(17.7,1){\makebox(0,0)[c]{\tiny $\alpha_{1}$}}
\put(20.2,1){\makebox(0,0)[c]{\tiny $\alpha_{\frac{3}{2}}$}}
\end{picture}
\end{center}
\begin{center}
\hskip -1cm \setlength{\unitlength}{0.16in}
\begin{picture}(24,3)
\put(-3,2){\makebox(0,0)[c]{$D(\G)$:}}
\put(5.6,2){\makebox(0,0)[c]{$\bigcirc$}}
\put(8,2){\makebox(0,0)[c]{$\bigcirc$}}
\put(10.4,2){\makebox(0,0)[c]{$\bigcirc$}}
\put(12.8,2){\makebox(0,0)[c]{$\bigcirc$}}
\put(15.2,2){\makebox(0,0)[c]{$\bigcirc$}}
\put(17.6,2){\makebox(0,0)[c]{$\bigcirc$}}
\put(20,2){\makebox(0,0)[c]{$\bigcirc$}}
\put(3.65,2){\line(1,0){1.5}}
\put(6,2){\line(1,0){1.5}}\put(8.4,2){\line(1,0){1.5}}
\put(10.8,2){\line(1,0){1.5}} \put(13.2,2){\line(1,0){1.5}}
\put(15.6,2){\line(1,0){1.5}} \put(18,2){\line(1,0){1.5}}
\put(20.3,2){\line(1,0){1.5}}
\put(2.7,1.95){\makebox(0,0)[c]{$\cdots$}}
\put(23,1.95){\makebox(0,0)[c]{$\cdots$}}
\put(5.4,1){\makebox(0,0)[c]{\tiny $\beta_{-3}$}}
\put(8,1){\makebox(0,0)[c]{\tiny $\beta_{-2}$}}
\put(10.3,1){\makebox(0,0)[c]{\tiny $\beta_{-1}$}}
\put(12.8,1){\makebox(0,0)[c]{\tiny $\beta_{0}$}}
\put(15.2,1){\makebox(0,0)[c]{\tiny $\beta_{1}$}}
\put(17.7,1){\makebox(0,0)[c]{\tiny $\beta_{2}$}}
\put(20.2,1){\makebox(0,0)[c]{\tiny $\beta_{3}$}}
\end{picture}
\end{center}
\begin{center}
\hskip -1cm \setlength{\unitlength}{0.16in}
\begin{picture}(24,3)
\put(-3,2){\makebox(0,0)[c]{$D(\SG^\diamond)$:}}
\put(5.6,2){\makebox(0,0)[c]{$\bigcirc$}}
\put(8,2){\makebox(0,0)[c]{$\bigcirc$}}
\put(10.4,2){\makebox(0,0)[c]{$\bigcirc$}}
\put(12.8,2){\makebox(0,0)[c]{$\bigcirc$}}
\put(15.2,2){\makebox(0,0)[c]{$\bigcirc$}}
\put(17.6,2){\makebox(0,0)[c]{$\bigcirc$}}
\put(20,2){\makebox(0,0)[c]{$\bigcirc$}}
\put(3.65,2){\line(1,0){1.5}}
\put(6,2){\line(1,0){1.5}}\put(8.4,2){\line(1,0){1.5}}
\put(10.8,2){\line(1,0){1.5}} \put(13.2,2){\line(1,0){1.5}}
\put(15.6,2){\line(1,0){1.5}} \put(18,2){\line(1,0){1.5}}
\put(20.3,2){\line(1,0){1.5}}
\put(2.7,1.95){\makebox(0,0)[c]{$\cdots$}}
\put(23,1.95){\makebox(0,0)[c]{$\cdots$}}
\put(5.4,1){\makebox(0,0)[c]{\tiny $\beta_{-\frac{7}{2}}$}}
\put(8,1){\makebox(0,0)[c]{\tiny $\beta_{-\frac{5}{2}}$}}
\put(10.3,1){\makebox(0,0)[c]{\tiny $\beta_{-\frac{3}{2}}$}}
\put(12.8,1){\makebox(0,0)[c]{\tiny $\beta_{-\hf}$}}
\put(15.2,1){\makebox(0,0)[c]{\tiny $\beta_{\frac{1}{2}}$}}
\put(17.7,1){\makebox(0,0)[c]{\tiny $\beta_{\frac{3}{2}}$}}
\put(20.2,1){\makebox(0,0)[c]{\tiny $\beta_{\frac{5}{2}}$}}
\end{picture}
\end{center}
In the case when $\ell\in\N$, we let $\mc F$ be the (semisimple) subcategory of $\mc O_\ell$ of integrable $\glinf$-modules of level $\ell$. The corresponding
subcategory of $\overline{\mc O}_\ell^\diamond$ is a
suitable subcategory $\mc U$ of $\glinf$-modules of negative level $-\ell$.
By Theorem~\ref{thm:equivalence}, $\mc U$ is equivalent to $\mc F$,
and hence it is semisimple; moreover, we
recover the character formulas for modules in $\mc U$ in \cite{KR} and \cite{CL0} that
were obtained by completely different methods.
\end{rem}

\subsection{Equivalence of tensor categories}

Recall that any modules in $\Otb$, $\Orb$, $\Olb$ and $\Olrb$ are equipped natural
$\Z_2$-gradations given by \eqnref{wt-Z2-gradation}. Note that the functors $T$, $\ov{T}$, $T^\diamond$ and $\ov{T}^\diamond$ preserve the $\Z_2$-gradations of the modules. For $\wt{M}$, $\wt{N}\in \Otb$, we have  $\wt{M}\otimes \wt{N}\in \Otb$ by \cite[Theorem 3.2]{CK} and hence $\Otb$ is a commutative tensor category (see, for example, \cite[Section 4.2]{KS}) with the trivial module as the unit object. Similarly, ${\On}$, $\Orb$, $\Olb$ and $\Olrb$ are commutative tensor categories with the unit objects (cf. \cite[Theorem 3.1, 3.2]{CK}).

By \lemref{T:tensor} and \thmref{thm:equivalence}, we have the following theorem.

\begin{thm}\label{thm:equivalence:tensor}
The functors $T:\Otnb\rightarrow{\On}$, $\ov{T}: \Otnb
\rightarrow \Ornb$, $T^\diamond: \Otnb \rightarrow \Olnb$, and
$\ov{T}^\diamond: \Otnb \rightarrow \Olrnb$ are equivalences of
tensor categories.
\end{thm}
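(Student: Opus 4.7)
The plan is to upgrade the equivalences of abelian categories given by \thmref{thm:equivalence} to equivalences of tensor categories. For each functor $F\in\{T,\ov T,T^\diamond,\ov T^\diamond\}$ this requires two ingredients: (a) that $F$ sends the unit object of the source to the unit object of the target; and (b) a natural isomorphism $F(\wt M\otimes\wt N)\cong F(\wt M)\otimes F(\wt N)$ that is coherent with the associativity and symmetric-braiding constraints. The unit part is immediate: the trivial $\DG$-module $\C$ lies in $\Otnb$ and, since the zero weight is visibly in every $\h$-, $\Sh$-, $\hd$-, $\Shd$-weight lattice, each functor sends $\C$ to the trivial module over the corresponding Lie superalgebra.

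For the comparison map I would work with $T$; the other three are entirely parallel. Because $T$ is defined as the sum of $\wt{\h}$-weight subspaces whose weights happen to lie in $\h^{*}\subset\wt{\h}^{*}$, the inclusions $T(\wt M)\hookrightarrow\wt M$ and $T(\wt N)\hookrightarrow\wt N$ induce a canonical map
\begin{equation*}
\Phi_{\wt M,\wt N}\colon T(\wt M)\otimes T(\wt N)\longrightarrow T(\wt M\otimes\wt N),
\end{equation*}
well-defined because the sum of two weights in $\h^{*}$ is again in $\h^{*}$, and automatically a $\G$-module homomorphism, natural in both variables. To show that $\Phi_{\wt M,\wt N}$ is an isomorphism, I would first handle the case $\wt M=L(\wt{\mf l},\wt\mu)$, $\wt N=L(\wt{\mf l},\wt\la)$ for $\mu,\la\in P^{+}$; this is exactly the content of \lemref{T:tensor}(i), which gives the equality $T(L(\wt{\mf l},\wt\mu)\otimes L(\wt{\mf l},\wt\la))=L(\mf l,\mu)\otimes L(\mf l,\la)$.

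To pass to general $\wt M,\wt N\in\Otnb$, I would invoke condition (i) in the definition of $\Otn$: any object decomposes as an $\wt{\mf l}$-module into a direct sum of the form $\bigoplus_{i}L(\wt{\mf l},\wt\mu_{i})$. Since both $T$ (an exact additive functor by \propref{SD:comp:cat}) and the tensor product $\otimes$ commute with direct sums in each slot, and since $\Phi$ is natural, the irreducible-Levi case implies that $\Phi_{\wt M,\wt N}$ is an isomorphism in general. Coherence with the associator and unit constraints is then immediate, as $\Phi$ is nothing other than the natural identification of weight subspaces. The symmetric braiding, which carries the Koszul sign, is respected because the functors preserve the $\Z_{2}$-gradation \eqnref{wt-Z2-gradation}, so the signs on both sides of $\Phi$ agree.

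The main obstacle I foresee is the reduction step from irreducible $\wt{\mf l}$-modules to arbitrary objects: one must be careful that the $\wt{\mf l}$-isomorphism $\wt M\otimes\wt N\cong\bigoplus_{i,j}L(\wt{\mf l},\wt\mu_{i})\otimes L(\wt{\mf l},\wt\la_{j})$ is compatible, after applying $T$, with the $\G$-action on $T(\wt M\otimes\wt N)$, not merely with the $\mf l$-action. Once this is spelled out using the naturality of $\Phi$, combining the resulting tensor-functor structure on $T,\ov T,T^\diamond,\ov T^\diamond$ with \thmref{thm:equivalence} yields the asserted equivalences of tensor categories.
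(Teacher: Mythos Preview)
Your proposal is correct and follows essentially the same route as the paper, which simply invokes \lemref{T:tensor} and \thmref{thm:equivalence}; you have merely spelled out the details of how these combine. The concern you raise at the end is not a genuine obstacle: since $\Phi_{\wt M,\wt N}$ is already a $\G$-module map by construction, it suffices to check bijectivity at the level of $\mf l$-modules (or even of underlying vector spaces), and this is exactly what the Levi decomposition together with \lemref{T:tensor} provides.
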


\section{Tilting modules}
\label{sec:SDtilting}

For $\wt{M}\in\Otb$ such that $\wt{M}=\sum_{\mu\in\Dh}\wt{M}_\mu$,
we consider the restricted dual
\begin{align*}
\wt{M}^\vee:=\{f\in \text{Hom}_\C(\wt{M},\C)\mid
f(\wt{M}_\gamma)=0,\text{ for all but finitely many
}\gamma\in\wt{\h}^*\},
\end{align*}
which is equipped with the usual $\Z_2$-gradation and $\DG$-action.
We twist the usual $\DG$-module structure on $\wt{M}^\vee$ with the
automorphism $\tau$ defined in \eqnref{tau}, and denote the
resulting $\DG$-module by $\wt{M}^\tau$. The $\Z_2$-gradation on
$\wt{M}^\tau$ equals the $\Z_2$-gradation defined by
\eqnref{wt-Z2-gradation}. There are natural isomorphisms
$(\wt{M}^\tau)^\tau=\wt{M}$. It is easy to see that ${\rm
ch}(\wt{M}^\tau)={\rm ch}(\wt{M})$ and
$\wt{L}(\wt{\la})^\tau=\wt{L}(\wt{\la})$. The restriction of $\tau$
to $\wt{\mf l}$ is also denoted by $\tau$. Similarly, we can define
$L(\wt{\mf l},\wt{\mu})^\tau$ for $\mu\in P_a^+$. Note that
$\wt{M}^\tau\in\Otb$ for all $\wt{M}\in\Otb$ since $L(\wt{\mf
l},\wt{\mu})^\tau\cong L(\wt{\mf l},\wt{\mu})$ for all $\mu\in P_a^+$. The
contravariant functor from $\Otb$ to itself defined by sending
$\wt{M}$ to $\wt{M}^\tau$ is also denoted by $\tau$. Note that the
functor $\tau$ is an isomorphism of categories.

The restrictions of the automorphism $\tau$ of $\DG$ to $\G$, $\SG$,
$\G^\diamond$ and $\SG^\diamond$ are also denoted by $\tau$.
Similarly, we can define $M^\tau$ for an object $M$ in $\Oa$,
$\Orb$, $\Olb$ and $\Olrb$. We show that $M^\tau$ belongs to the
same category as $M$  and $(M^\tau)^\tau=M$. Also we have ${\rm
ch}({M}^\tau)={\rm ch}({M})$ and $L^\tau=L$ for all irreducible
highest weight modules $L$.

We summarize the above discussion in the following.

\begin{prop}  \label{prop:tau}
We have ${\mf T}\circ\tau=\tau\circ{\mf T}$ for ${\mf T}=T, \ov{T},
T^\diamond$ or $\ov{T}^\diamond$.
\end{prop}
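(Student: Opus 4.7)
The plan is to construct, for each $\wt M \in \Otb$, a natural isomorphism $\Phi^{\mf T}_{\wt M}:\mf T(\wt M^\tau) \xrightarrow{\sim} \mf T(\wt M)^\tau$ that is functorial in $\wt M$, establishing the claimed natural equivalence $\mf T\circ\tau \cong \tau\circ \mf T$. I shall carry out the case $\mf T=T$ in detail; the three remaining cases are verbatim with $\G,\h,\I$ replaced by the appropriate analogues $(\SG,\ov\h,\ov\I)$, $(\Gd,\hd,\I^\diamond)$, and $(\SGd,\Shd,\ov\I^\diamond)$.

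Two structural observations are needed first. On the one hand, since $\tau(\widehat E_{rs})=\pm\widehat E_{sr}$ and each of the index sets $\I,\ov\I,\I^\diamond,\ov\I^\diamond$ is symmetric under the swap $r\leftrightarrow s$, while $\tau(K)=-K$ obviously remains in each subalgebra, the automorphism $\tau$ of $\DG$ restricts to an order-$4$ automorphism of each of $\G,\SG,\Gd,\SGd$; this ensures that $\tau$ on $\Oa,\Orb,\Olb,\Olrb$ is compatible with restriction from $\Otb$. On the other hand, $|r|(|r|+|r|)\in 2\Z$, so $\tau(\widehat E_{rr})=-\widehat E_{rr}$, i.e.\ $\tau|_{\Dh}=-\mathrm{id}$. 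Unwinding the definition of the $\tau$-twisted dual, this implies $(\wt M^\tau)_\gamma\cong (\wt M_\gamma)^*$ (as the space of functionals supported on $\wt M_\gamma$) for each $\gamma\in \Dh^*$. Identifying $\h^*\hookrightarrow \Dh^*$ as those weights vanishing on $\widehat E_{rr}$ for $r\notin\I$, both sides of the desired equality acquire the same underlying graded vector space
\begin{equation*}
T(\wt M^\tau) = \bigoplus_{\gamma\in\h^*}(\wt M^\tau)_\gamma \;\cong\; \bigoplus_{\gamma\in\h^*}(\wt M_\gamma)^* \;\cong\; T(\wt M)^\vee = T(\wt M)^\tau.
\end{equation*}

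The natural isomorphism $\Phi_{\wt M}$ is then the restriction map $f\mapsto f|_{T(\wt M)}$, whose inverse is extension-by-zero outside $T(\wt M)\subset \wt M$. To verify $\Phi_{\wt M}$ is a $\G$-module map, take $x\in\G$ and $v\in T(\wt M)$: by the first observation $\tau(x)\in\G$, hence $\tau(x)v\in T(\wt M)$ (since $T(\wt M)$ is a $\G$-submodule of $\wt M$), and therefore
\begin{equation*}
\bigl(x\cdot\Phi_{\wt M}(f)\bigr)(v) = -(-1)^{|x||f|}f(\tau(x)v) = (x\cdot f)(v) = \Phi_{\wt M}(x\cdot f)(v).
\end{equation*}
Naturality of $\Phi$ in $\wt M$ is tautological: for $\wt h\in\mathrm{Hom}_{\Otb}(\wt M,\wt N)$, both $T(\tau(\wt h))$ and $\tau(T(\wt h))$ are given by precomposition with $\wt h$ (equivalently with $T[\wt h]$, which is the restriction of $\wt h$ to $T(\wt M)$), and these restrict to the same map on $T(\wt N)\subset\wt N$. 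The only ingredient that is not formal is the closure of each of $\G,\SG,\Gd,\SGd$ under $\tau$ recorded above; everything else is bookkeeping of weights and signs, so no real obstacle arises.
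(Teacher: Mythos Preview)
Your proof is correct and follows essentially the same approach as the paper. The paper presents this proposition as a summary of the preceding discussion (``We summarize the above discussion in the following''), leaving the verification implicit; you have spelled out explicitly the natural isomorphism via restriction of functionals and identified the one nontrivial ingredient, namely that $\tau$ restricts to each of $\G,\SG,\Gd,\SGd$, which is exactly what the paper records just before stating the proposition.
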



An object $M \in \Oa$ is said to have a {\em Verma flag}
if it has a (possibly infinite) increasing filtration of $\G$-modules:
$$0 =M_0 \subseteq M_1 \subseteq M_2 \subseteq \cdots$$
such that $M=\cup_{i\ge 0} M_i$ and each $M_i/M_{i-1}$ is either $0$
or isomorphic to a parabolic Verma module $\Delta(\la^{i})$ for some
$\la^{i} \in P^+$. We define $(M :\Delta(\mu))$ for $\mu\in P^+$ to
be the number of subquotients of a Verma flag of $M$ that are
isomorphic to $\Delta(\mu)$. The notion of Verma flag is defined in
a similar fashion in the categories $\Otnb$, $\Ornb$, $\Olnb$ and
$\Olrnb$.
%
A {\em tilting module} associated to $\la\in P^+$ in the
category $\On$ is an indecomposable $\G$-module $U(\la)$
such that
\begin{itemize}
\item[(1)]
$U(\la)$ has a Verma flag with $\Delta(\la)$ at the bottom, and
\item[(2)]
${\rm Ext}^1(\Delta(\mu),U(\la))=0$ for all $\mu\in P^+$.
\end{itemize}
For $\mu\in P^+$, the tilting module $\wt{U}(\wt{\mu})$
(respectively $\ov{U}(\ov\mu)$, ${U}^\diamond(\mu^\diamond)$,
$\ov{U}^\diamond(\ov\mu^\diamond)$) in $\Otnb$ (respectively
$\Ornb$, $\Olnb$, $\Olrnb$) are defined in a similar fashion.

From the proposition below, the tilting module $\wt{U}(\wt{\mu})$
(respectively $U(\la)$, $\ov{U}(\ov\mu)$,
${U}^\diamond(\mu^\diamond)$, $\ov{U}^\diamond(\ov\mu^\diamond)$)
belongs to $\Otb$ (respectively $\Oa$, $\Orb$, $\Olb$, $\Olrb$) for
$\mu\in P^+_a$.

\begin{prop} \label{tiltingO}
For $\la\in P^+_a$, there exists a unique (up to isomorphism)
tilting module $U(\la)$ associated to $\la$ in $\Oa$. There also
exists a unique (up to isomorphism) tilting module
$\wt{U}(\wt{\la})$ (respectively $\ov{U}(\ov\la)$,
${U}^\diamond(\la^\diamond)$, $\ov{U}^\diamond(\ov\la^\diamond)$)
associated to $\wt{\la}$ (respectively $\ov\la$, $\la^\diamond$,
$\ov\la^\diamond$) in $\Otb$ (respectively $\Orb$, $\Olb$, $\Olrb$).
Moreover, we have $T\big{(}U(\wt{\la})\big{)}=U({\la})$,
$\ov{T}\big{(}U(\wt{\la})\big{)}=U(\ov{\la})$,
$T^\diamond\big{(}U(\wt{\la})\big{)}=U({\la}^\diamond)$ and
$\ov{T}^\diamond\big{(}U(\wt{\la})\big{)}=U(\ov{\la}^\diamond)$ for
$\la\in P^+_a$, and $U^\tau=U$ for any tilting module $U$ in either
of these categories.
\end{prop}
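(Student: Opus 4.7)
The plan is to reduce the entire proposition to a single construction in $\Otb$ and then propagate the result through the category equivalences provided by \thmref{thm:equivalence}. First I would construct $\wt U(\wt\la)$ in $\Otb$ following the inductive procedure of \cite[Section 6]{CLW}: start with $\wt\Delta(\wt\la)$ and build an increasing filtration $M_0 \subset M_1 \subset \cdots$ by successively taking non-split extensions by parabolic Verma modules $\wt\Delta(\wt\mu)$ with $\mu > \la$ in the appropriate partial order on $P^+_a$, so as to kill one by one the $\text{Ext}^1$-obstructions arising from higher parabolic Vermas. The finiteness conditions built into $\Otb$ together with \lemref{g-extension} guarantee that each weight space stabilizes after finitely many steps, so the colimit lies in $\Otb$ and admits the required Verma flag beginning with $\wt\Delta(\wt\la)$. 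A standard argument at the level of the Verma flag, using indecomposability of $\wt\Delta(\wt\la)$ at the bottom together with $\text{Ext}^1$-vanishing, yields uniqueness up to isomorphism.

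Next I would transport the tilting module to the remaining four categories. By \thmref{thm:equivalence} the functors $T$, $\ov T$, $T^\diamond$, $\ov T^\diamond$ are exact equivalences of categories, so in particular they preserve indecomposability and intertwine $\text{Ext}^1$-groups; by \thmref{matching:modules} they send parabolic Verma modules to parabolic Verma modules under the bijections $\wt\la \leftrightarrow \la \leftrightarrow \ov\la \leftrightarrow \la^\diamond \leftrightarrow \ov\la^\diamond$. Applying $T$ to the Verma flag of $\wt U(\wt\la)$ therefore yields a Verma flag of $T(\wt U(\wt\la))$ starting with $\Delta(\la)$ for which $\text{Ext}^1(\Delta(\mu),-)=0$ holds for all $\mu\in P^+$, proving that $T(\wt U(\wt\la))$ is a tilting module associated to $\la$ in $\Oa$. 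Uniqueness in $\Oa$ follows by transporting any purported tilting module back through the quasi-inverse of $T$ and appealing to uniqueness in $\Otb$. The same argument with $T$ replaced by $\ov T$, $T^\diamond$, $\ov T^\diamond$ yields the remaining three cases together with the four intertwining identities $T(\wt U(\wt\la))=U(\la)$, etc.

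For the $\tau$-invariance, the contravariant functor $\tau$ preserves characters and, by $\wt L(\wt\la)^\tau \cong \wt L(\wt\la)$, fixes highest weights; it therefore sends a Verma flag to one with the same multiplicities, and preserves $\text{Ext}^1$ as a duality. Hence $\wt U(\wt\la)^\tau$ satisfies both tilting axioms with respect to $\wt\la$ and is indecomposable, so by uniqueness $\wt U(\wt\la)^\tau \cong \wt U(\wt\la)$. \propref{prop:tau} then transports the identity along each of the equivalences to yield $U(\la)^\tau \cong U(\la)$, $\ov U(\ov\la)^\tau\cong \ov U(\ov\la)$, and so on.

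The principal obstacle is the first step: making the infinite inductive construction of $\wt U(\wt\la)$ rigorous in $\Otb$. One must verify that the successive extensions by higher-weight parabolic Vermas do not exhaust any given weight space, that only finitely many layers contribute to each weight, and that the resulting colimit still lies in $\Otb$ (i.e., remains semisimple over $\wt{\mf l}$ and weight-bounded). The remaining steps are then essentially formal consequences of \thmref{matching:modules}, \thmref{thm:equivalence}, and \propref{prop:tau}.
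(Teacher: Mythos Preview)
Your overall strategy---construct a tilting module in one of the five categories, establish uniqueness there, and then transport everything through the equivalences of \thmref{thm:equivalence} and \propref{prop:tau}---is correct and is exactly what the paper does.  The difference is in which category serves as the anchor and in how uniqueness is handled.

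The paper anchors the argument in the Lie \emph{algebra} category $\Oa$, not in $\Otb$.  Existence of $U(\la)$ in $\Oa$ is obtained by citing the construction of \cite[Theorem~3.14]{CW} verbatim, and that construction already produces $U(\la)^\tau\cong U(\la)$ directly; existence (and $\tau$-invariance) in $\Otb$, $\Orb$, $\Olb$, $\Olrb$ is then deduced from the equivalences, rather than the other way around.  Your choice to start in $\Otb$ works too, but it means you cannot simply cite \cite{CW} and must redo the inductive construction in a genuinely super setting.

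More substantively, the paper is explicit about a point your ``standard argument'' for uniqueness elides: the category $\Oa$ need not have enough injectives, so one cannot directly invoke the usual long exact sequence to get ${\rm Ext}^1(U(\la)/\Delta(\la),U'(\la))=0$ from the vanishing on Verma layers.  The paper fixes this by embedding $\Oa$ in a larger abelian category $\mc C$ of $\h$-semisimple $\G$-modules with enough injectives, uses \lemref{g-extension} to see that ${\rm Ext}^1_{\Oa}(\Delta(\mu),U'(\la))=0$ forces ${\rm Ext}^1_{\mc C}(\Delta(\mu),U'(\la))=0$, and then applies \cite[Lemma~5.10]{So} inside $\mc C$ to pass to the Verma-flag quotient.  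The Fitting-decomposition step (every endomorphism of $U(\la)$ is locally nilpotent or an isomorphism, using finite-dimensionality of weight spaces) is also made explicit.  If you keep your anchor in $\Otb$ you would need the same kind of care there; as written, your uniqueness sketch is missing this ingredient.
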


\begin{proof}
The same proof for  \cite[Theorem~3.14]{CW} ensures the existence of
the tilting modules. From the construction of the tilting modules,
we have ${U}({\la})^\tau={U}({\la})$ for $\la\in P^+_a$.

The uniqueness of the tilting modules follows by adapting an
argument of Soergel \cite[Section 5]{So} (also cf. \cite{Br2}). As
the category $\Oa$ may not have enough injective objects, we need to
modify the proof therein as follows. First we remark that every
element in ${\rm End}(U(\la))$  for $\la\in P^+_a$ is either locally
nilpotent or an isomorphism, since $U(\la)$ is indecomposable and
all its weight space is finite dimensional and the usual argument
via the Fitting decomposition works.

Let $\mc C$ be an abelian category of $\G$-modules containing $\Oa$
with enough injective objects such that every module in $\mc C$ is a
semisimple ${\h}$-module. Let $U(\la)$ and $U'(\la)$ be two tilting
modules associated to $\la$ in $\Oa$. Then ${\rm
Ext}_\On^1(\Delta(\mu),U'(\la))=0$ for all $\mu\in P^+_a$. By
\lemref{g-extension}, ${\rm Ext}_{\mc C}^1(\Delta(\mu),U'(\la))=0$
for all $\mu\in P^+_a$. Since $U(\la)/\Delta(\la)$ has a Verma flag,
we can apply \cite[Lemma 5.10]{So} and conclude that ${\rm Ext}_{\mc
C}^1(U(\la)/\Delta(\la),U'(\la))=0$. Therefore we have
$$
0\longrightarrow {\rm
Hom}\big{(}U(\la)/\Delta(\la),U'(\la)\big{)}\longrightarrow {\rm
Hom}\big{(}U(\la),U'(\la)\big{)} \longrightarrow {\rm
Hom}\big{(}\Delta(\la),U'(\la)\big{)} \longrightarrow 0.$$
Hence there is $\varphi\in{\rm Hom}\big{(}U(\la),U'(\la)\big{)}$
which restricts to the identity map on $\Delta(\la)$. Similarly,
there is $\varphi'\in{\rm Hom}\big{(}U'(\la),U(\la)\big{)}$ which
restricts to the identity map on $\Delta(\la)$. Therefore
$\varphi\circ\varphi'$ and $\varphi'\circ\varphi$  are isomorphisms
since they are not locally nilpotent. Hence $U(\la)$ and $U'(\la)$
are isomorphic.

By \thmref{thm:equivalence:tensor}, \thmref{matching:modules} and
the first part of the proposition, tilting modules exist and are
unique in the categories $\Otb$, $\Orb$, $\Olb$ and $\Olrb$. The
last part of the theorem also follows from \propref{prop:tau},
\thmref{matching:modules} and ${U}({\la})^\tau={U}({\la})$ for
$\la\in P^+_a$.
\end{proof}

\begin{rem}
The argument above can be applied to show that  there exists a
unique tilting module associated to each dominant weight in the
categories $\mc{O}$, $\ov{\mc{O}}$, and $\wt{\mc{O}}$ defined in
\cite[Section 3.2]{CLW} as well.
\end{rem}

\section{Character formula for general linear Lie superalgebras}
\label{sec:charformula}

In this section we use the super duality and truncation functors to
obtain character formulas for irreducible modules of general linear
Lie superalgebras of finite rank.

\subsection{Module categories of finite-rank Lie superalgebras}
\label{sec:Of}

We recall that the finite-rank Lie superalgebras $\wt{\G}(m,n)$,
$\G(m,n)$, $\SG(m,n)$, $\G^\diamond(m,n)$, and $\SG^\diamond(m,n)$
are defined in \secref{subalgebras} for $m,n\in \Z_+$. In this
subsection, we introduce certain module categories of these
finite-rank Lie superalgebras, and relate them to the categories
studied earlier via the truncation functors (see
Proposition~\ref{truncation}).

Let $\Oanf$ be the full subcategory of $\G(m,n)$-modules $M$ such that $M$
is a semisimple $\h(m,n)$-module with finite-dimensional weight
subspaces $M_\gamma$, $\gamma\in {\h}(m,n)^*$, satisfying
\begin{itemize}
\item[(i)] ${M}$ decomposes over $\mf{l}(m,n)$ into a direct sum of
$L(\mf{l}(m,n),\mu)$ for $\mu\in {P^+}$.
\item[(ii)] There exist finitely many weights
$\la_1,\la_2,\ldots,\la_k\in{P^+}$ (depending on ${M}$) such that
if $\gamma$ is a weight in ${M}$, then
$\gamma\in\la_i-\sum_{\alpha\in{\Pi}(m,n)}\Z_+\alpha$, for some $i$.
\end{itemize}
Recall the central element $K$ in the Lie superalgebra $\G(m,n)$.
Let $\Oaf$ be the full subcategory of $\Oanf$ consisting of the
objects $M\in \Oanf$ such that $Kv=av$ for all $v\in M$. We
certainly have $\Oanf=\bigoplus_{a\in\C}\Oaf$. The parabolic Verma
modules $\Delta_{(m,n)}(\mu)$ and irreducible modules
$L_{(m,n)}(\mu)$ for $\mu\in P^+_a \cap \h (m,n)^*$ lie in
$\Oa(m,n)$, by \propref{truncation} below. The categories $\Oa(m,n)$
are abelian. Analogously we have four
variants of the above categories with similar properties for Lie
superalgebras $\wt{\G}(m,n)$, $\SG(m,n)$, $\G^\diamond(m,n)$, and
$\SG^\diamond(m,n)$ in self-explanatory notations. For $\wt{V} \in \Ot(m,n)$, we define a natural
$\Z_2$-gradation $\wt{V} =\wt{V}_{\bar 0} \oplus \wt{V}_{\bar 1}$ similar to
\eqref{wt-Z2-gradation}. This allows us to define a subcategory
$\Ot^{\bar 0}(m,n)$ of $\Otf$ as before and analogously we have three variants of the above categories for Lie superalgebras $\SG(m,n)$, $\G^\diamond(m,n)$, and $\SG^\diamond(m,n)$.


Let $M$ be an $\Dh$-, $\h$-, $\Sh$-, $\hd$- or $\Sh$-module such
that $M=\bigoplus_{\gamma}M_\gamma$, where $\gamma$ runs over
$\gamma\in\sum_{i\in{\Z_+}}\C\epsilon_{-i}+\sum_{1\le j\le
k}\C\epsilon_{\bar{j}}+\sum_{j\in\N}\C\epsilon_j+\C\La_0$. For
$m,n\in\Z_+\cup\{\infty\}$, we consider the truncated vector space
$$
\mf{ tr}_{(m,n)}(M)=\bigoplus_{\nu}M_\nu,
$$
where the sum is over $\nu\in\sum_{-m<i\le
0}\C\epsilon_{-i}+\sum_{1\le j\le k}\C\epsilon_{\bar{j}}+\sum_{1\le
j<n+1}\C\epsilon_j+\C\La_0$.

\begin{lem}\label{truncation-Levi}
Let $m,n\in\Z_+\cup\{\infty\}$ and $\mu\in P^+_a$. Then,
$\mf{tr}_{(m,n)}\big{(}\wt{L}(\Dl,\wt{\mu})\big{)}$ is an
$\Dlf$-module, and
\begin{equation}
\mf{tr}_{(m,n)}\big{(}\wt{L}(\Dl,\wt{\mu})\big{)}=
\begin{cases}
\wt{L}(\Dlf,\wt{\mu}),\,\,\text{if }\langle\wt{\mu},
\widehat{E}_{jj}\rangle=0,\forall j\le -m, \forall j\ge n+1,\\
0,\quad\text{otherwise}.
\end{cases}
\end{equation}
Parallel results hold for the truncation of $L(\cl,\mu),
\ov{L}(\Sl,\ov{\mu}), L^\diamond(\ld,{\mu}^\diamond),
\ov{L}^\diamond(\Sld,\ov{\mu}^\diamond)$.
\end{lem}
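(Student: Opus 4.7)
The plan is to reduce the statement to the corresponding assertion for each commuting arm of the Levi by a tensor factorization, then carry out an explicit PBW-plus-flow argument on a single arm; this works uniformly for all five Levi subalgebras once we observe that only the parity structure and the Borel ordering differ across them.

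First I would exploit that $\wt{\mf l}$ decomposes as a direct sum of pairwise commuting Lie sub-superalgebras: the left arm $\wt{\mf l}^-$ (generated by $\widehat E_{rs}$ with $r,s\in\hf\Z$ and $r,s\le 0$), the middle block $\mf l^{\mathbb K}$ (generated by $Y_0\subseteq\{\alpha_{\ov{1}},\ldots,\alpha_{\ov{k-1}}\}$), the right arm $\wt{\mf l}^+$ (generated by $\widehat E_{rs}$ with $r,s\in\hf\Z$ and $r,s>0$), and the center $\C K$; the same decomposition carries over to $\wt{\mf l}(m,n)$ with each arm replaced by its finite-rank truncation. Because these ideals pairwise commute, for $\wt\mu\in P_a^+$ the highest weight irreducible factorizes as
\begin{equation*}
\wt L(\wt{\mf l},\wt\mu)\;\cong\;\wt L(\wt{\mf l}^-,\wt\mu^-)\,\otimes\, L(\mf l^{\mathbb K},\wt\mu^{\mathbb K})\,\otimes\,\wt L(\wt{\mf l}^+,\wt\mu^+),
\end{equation*}
with $K$ acting as the scalar $a$, and similarly for $\wt{\mf l}(m,n)$. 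The functor $\mf{tr}_{(m,n)}$ respects this tensor product and acts on each factor independently; on the middle block it acts trivially, since $\mathbb K$-indices are always in range. The lemma is thus reduced to its analogue for each arm.

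Next I would treat the left arm (the right is symmetric). Identify $\wt{\mf l}^-\cong\gl(V^-)$ for $V^-=\bigoplus_{i\le 0,\,i\in\hf\Z}\C v_i$ and split $V^-=V^-_{\mathrm{out}}\oplus V^-_{(m)}$ with $V^-_{(m)}=\bigoplus_{-m<i\le 0}\C v_i$. In block form, the opposite nilradical of the standard Borel of $\wt{\mf l}^-$ decomposes as a vector space as $\wt{\mf n}^{-}_{\wt{\mf l}^-}=A_-\oplus C\oplus D_-$, where $A_-$ and $D_-$ are the strictly lower triangular parts of $\gl(V^-_{\mathrm{out}})$ and $\gl(V^-_{(m)})=\wt{\mf l}^-(m)$ respectively, and $C=\mathrm{Hom}(V^-_{\mathrm{out}},V^-_{(m)})$ is spanned by the $\widehat E_{rs}$ with $-m<r\le 0$ and $s\le -m$. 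A short flow-balance computation in the root lattice shows that under the hypothesis $\wt\mu^-(\widehat E_{jj})=0$ for all $j\le -m$, the only PBW monomials $X$ in $U(\wt{\mf n}^{-}_{\wt{\mf l}^-})$ for which $Xv_{\wt\mu^-}$ has weight in the truncation range are those lying in $U(D_-)$: summing the constraint $\nu_j=0$ over $j\le -m$ forces the number of $C$-factors of $X$ to vanish, and once $C$ is excluded the balance at each individual out-of-range index combined with the total ordering of the outer indices forces the $A_-$-part of $X$ to be trivial as well. Hence $\mf{tr}^-_{(m)}(\wt M(\wt{\mf l}^-,\wt\mu^-))=U(D_-)v_{\wt\mu^-}\cong \wt M(\wt{\mf l}^-(m),\wt\mu^-)$, and exactness of the truncation functor on weight modules with finite-dimensional weight spaces yields a surjection $\wt M(\wt{\mf l}^-(m),\wt\mu^-)\twoheadrightarrow\mf{tr}^-_{(m)}\wt L(\wt{\mf l}^-,\wt\mu^-)$.

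It then remains to show that this truncation is either irreducible (in the non-vanishing case) or zero (when the hypothesis fails). For irreducibility, any $\wt{\mf l}^-(m)$-singular weight vector $w$ in the truncation has range-supported weight $\nu$, and $\wt\mu^--\nu$ is a $\Z_+$-combination of positive roots of $\wt{\mf l}^-$; decomposing these roots into those of $A$, $D$ and the block $B=\mathrm{Hom}(V^-_{(m)},V^-_{\mathrm{out}})$ and applying the mirror of the flow-balance argument, one shows the $A$- and $B$-components of $\wt\mu^--\nu$ must vanish, hence $\wt\mu^--\nu$ lies in the $D$-positive root lattice; combined with the $D_+$-singularity of $w$ this forces $\nu=\wt\mu^-$ and $w\in\C v_{\wt\mu^-}$. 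For the vanishing case, when $\wt\mu^-(\widehat E_{jj})\ne 0$ for some $j\le -m$, the same flow accounting shows a range-supported weight of the irreducible would require both a prescribed number of $C$-factors and balanced $A_-$-contributions matching $\wt\mu^-$; the dominance constraints on $\wt\mu^-\in P_a^+$ coming from the partition data $\la^-$ prevent such a weight vector from surviving in the irreducible quotient. The four analogous assertions for $L(\mf l,\mu)$, $\ov L(\ov{\mf l},\ov\mu)$, $L^\diamond(\mf l^\diamond,\mu^\diamond)$ and $\ov L^\diamond(\ov{\mf l}^\diamond,\ov\mu^\diamond)$ follow by the same decomposition-plus-arm argument, with only the arm parities and Borel orientations changing. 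The main obstacle I expect is precisely this last step, i.e.\ passing from the Verma-level flow analysis to the irreducible: the Verma computation is mechanical, but upgrading to a statement about the maximal proper submodule $N\subset\wt M(\wt{\mf l}^-,\wt\mu^-)$ and thence to $\wt L(\wt{\mf l}^-,\wt\mu^-)$ requires the weight-lattice analysis outlined above, namely showing $\mf{tr}^-_{(m)}N$ coincides with the maximal proper submodule of $\wt M(\wt{\mf l}^-(m),\wt\mu^-)$.
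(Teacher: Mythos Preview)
The paper states this lemma without proof, so there is no detailed argument to compare against; the authors evidently regard it as standard once the highest weights in $P^+_a$ are set up via modified Frobenius coordinates. Your tensor factorization of $\wt L(\wt{\mf l},\wt\mu)$ along the commuting pieces $\wt{\mf l}^-\oplus\mf l^{\mathbb K}\oplus\wt{\mf l}^+\oplus\C K$ is correct and is exactly the reduction one wants, and your PBW/flow computation showing $\mf{tr}_{(m)}$ of the arm Verma equals the Verma for the truncated arm is also correct.

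The genuine gap is the one you yourself flag at the end, and your proposed weight-lattice patch does not close it. Knowing that a $D_+$-singular vector $w$ in the truncation has weight $\nu$ with $\wt\mu^--\nu$ in the $D$-positive root cone does \emph{not} force $\nu=\wt\mu^-$: a highest weight module can have many singular vectors strictly below the top. What you actually need is that $w$ is singular for the \emph{full} arm $\wt{\mf l}^-$, and for this you must know that $\widehat E_{rs}w=0$ whenever $r\le -m$. The weight of $\widehat E_{rs}w$ has $r$-component equal to $+1$, so this follows \emph{provided} every weight of $\wt L(\wt{\mf l}^-,\wt\mu^-)$ has non-positive entries --- and symmetrically non-negative entries on the right arm. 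That sign constraint is precisely the polynomiality of the arm modules: with the highest weights prescribed in \S\ref{sec:hw} each arm irreducible is the Schur functor $\mathbb S_{\la^\pm}$ applied to the (co)natural module, sitting inside a tensor power thereof. Once this is invoked, both your irreducibility step and the vanishing case (which in your proposal is only a hand-wave about ``dominance constraints'') become immediate: functoriality of Schur functors under $V\mapsto V_{(n)}$ gives $\mf{tr}\,\mathbb S_\la(V)=\mathbb S_\la(V_{(n)})$, which is the truncated irreducible when $\la$ fits and zero otherwise. This Schur-functor/character-stability argument is almost certainly what the paper has in mind, and it replaces your entire arm-level PBW analysis with a one-line appeal to the realization of these particular irreducibles.
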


As a consequence of the lemma above, we obtain exact functors $\mf{
tr}_{(m,n)}: \Otb \rightarrow \Otf$, $\mf{ tr}_{(m,n)}: \Oa
\rightarrow \Oaf$, $\mf{ tr}_{(m,n)}: \Orb \rightarrow \Orf$, $\mf{
tr}_{(m,n)}: \Olb \rightarrow \Olf$ and $\mf{ tr}_{(m,n)}: \Olrb
\rightarrow \Olrf$, by sending $M$ to $\mf{ tr}_{(m,n)}(M)$.

The following lemma can be proved by the same arguments as in
\cite[Lemma 3.2]{CLW} and \cite[Proposition 1.5]{Don}.

\begin{prop}\label{truncation}
Let $m,n\in\Z_+\cup\{\infty\}$, $\mu\in P^+_a$ and $X=L,\Delta, U$.
\begin{itemize}
\item[(i)] $\mf{tr}_{(m,n)}\big{(}\wt{X}(\wt{\mu})\big{)}=
\begin{cases}
\wt{X}_{(m,n)}(\wt{\mu}),\quad\text{if }\langle\wt{\mu},
\widehat{E}_{jj}\rangle=0,\forall j\le -m \;\&\;  j\ge n+1,\\
0,\quad\text{otherwise}.
\end{cases}$

\item[(ii)] $\mf{tr}_{(m,n)}\big{(}X(\mu)\big{)} =
\begin{cases}
X_{(m,n)}(\mu),\quad\text{if }
\langle\mu,\widehat{E}_{jj}\rangle=0,\forall j\le -m \;\&\;  j\ge n+1,\\
0,\quad\text{otherwise}.
\end{cases}$

\item[(iii)] $\mf{tr}_{(m,n)}\big{(}\ov{X}(\ov{\mu})\big{)}=
\begin{cases}
\ov{X}_{(m,n)}(\ov{\mu}),\quad\text{if }
\langle\ov{\mu},\widehat{E}_{jj}\rangle=0,\forall j\le -m \;\&\;   j\ge n+1,\\
0,\quad\text{otherwise}.
\end{cases}$

\item[(iv)] $\mf{tr}_{(m,n)}\big{(}X^\diamond({\mu}^\diamond)\big{)} =
\begin{cases}
X_{(m,n)}^\diamond({\mu}^\diamond),\quad\text{if }
\langle{\mu}^\diamond,\widehat{E}_{jj}\rangle=0,\forall j\le -m \;\&\;  j\ge n+1,\\
0,\quad\text{otherwise}.
\end{cases}$

\item[(v)] $\mf{tr}_{(m,n)}\big{(}\ov{X}^\diamond(\ov{\mu}^\diamond)\big{)}=
\begin{cases}
\ov{X}_{(m,n)}^\diamond(\ov{\mu}^\diamond),\quad\text{if }
\langle\ov{\mu}^\diamond,\widehat{E}_{jj}\rangle=0,\forall j\le -m \; \& \;  j\ge n+1,\\
0,\quad\text{otherwise}.
\end{cases}$
\end{itemize}
\end{prop}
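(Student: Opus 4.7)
The strategy is to reduce statements (i)--(v) for each module type $X\in\{L,\Delta,U\}$ to structurally parallel arguments, using \lemref{truncation-Levi} at the Levi level and handling the three types $\Delta$, $L$, $U$ in turn. I will describe the plan for part (i), since (ii)--(v) follow the same scheme in the other ambient categories. Throughout, exactness of $\mf{tr}_{(m,n)}$ is immediate from its definition as the sum of certain weight spaces.

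For $X=\Delta$, I would begin with the PBW vector-space isomorphism $\wt{\Delta}(\wt{\mu})\cong U(\wt{\mf{u}}_-)\otimes L(\Dl,\wt{\mu})$. Every root of $\wt{\mf{u}}_-$ is of the form $\epsilon_r-\epsilon_s$ with $r,s\in\wt{\I}$, so $\mf{tr}_{(m,n)}$ respects this decomposition and picks out exactly the action of $U(\wt{\mf{u}}_-(m,n))$ on the truncated Levi head. Combined with \lemref{truncation-Levi}, this gives $\mf{tr}_{(m,n)}\bigl(\wt{\Delta}(\wt{\mu})\bigr)\cong \wt{\Delta}_{(m,n)}(\wt{\mu})$ when the weight condition on $\wt{\mu}$ holds, and $0$ otherwise.

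For $X=L$, I would apply $\mf{tr}_{(m,n)}$ to the canonical surjection $\wt{\Delta}(\wt{\mu})\twoheadrightarrow\wt{L}(\wt{\mu})$ to obtain, by the previous step, a surjection $\wt{\Delta}_{(m,n)}(\wt{\mu})\twoheadrightarrow \mf{tr}_{(m,n)}(\wt{L}(\wt{\mu}))$ under the weight condition. The target contains a nonzero $\wt{\mu}$-weight vector, so it is a nonzero highest weight $\wt{\G}(m,n)$-module. The unique maximal submodule of $\wt{\Delta}_{(m,n)}(\wt{\mu})$ must surject onto the maximal submodule of $\mf{tr}_{(m,n)}(\wt{L}(\wt{\mu}))$, which vanishes by irreducibility; hence the truncation equals $\wt{L}_{(m,n)}(\wt{\mu})$, or is $0$ when the weight condition fails.

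The main obstacle is the tilting case $X=U$, which I would handle following \cite[Proposition 1.5]{Don}. The Verma flag on $\wt{U}(\wt{\mu})$, truncated termwise via the $\Delta$-case, yields a filtration of $\mf{tr}_{(m,n)}(\wt{U}(\wt{\mu}))$ whose nonzero subquotients are finite-rank parabolic Verma modules, with $\wt{\Delta}_{(m,n)}(\wt{\mu})$ at the bottom. To identify $\mf{tr}_{(m,n)}(\wt{U}(\wt{\mu}))$ with $\wt{U}_{(m,n)}(\wt{\mu})$, I must verify indecomposability together with the Ext-vanishing $\mathrm{Ext}^1\bigl(\wt{\Delta}_{(m,n)}(\wt{\nu}),\,\mf{tr}_{(m,n)}(\wt{U}(\wt{\mu}))\bigr)=0$ and then apply the uniqueness clause of \propref{tiltingO}. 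The Ext-vanishing is the delicate point: a candidate extension in $\Otf$ must be realized as the truncation of a corresponding extension in $\Otb$ whose middle term has a Verma flag, at which point the known infinite-rank vanishing $\mathrm{Ext}^1(\wt{\Delta}(\wt{\nu}),\wt{U}(\wt{\mu}))=0$ forces a splitting that descends to a splitting of the original. Indecomposability is then obtained by an analogous lifting argument applied to a putative nontrivial idempotent, using that $\wt{\Delta}(\wt{\mu})$ occurs with multiplicity one at the bottom of the Verma flag of $\wt{U}(\wt{\mu})$.
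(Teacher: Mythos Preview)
Your overall strategy matches the paper's (which simply cites \cite[Lemma 3.2]{CLW} for $\Delta$ and $L$, and \cite[Proposition 1.5]{Don} for $U$), and your treatments of $X=\Delta$ and of the tilting case are sound sketches in that spirit.  The argument for $X=L$, however, has a genuine gap.

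From the surjection $\wt{\Delta}_{(m,n)}(\wt{\mu})\twoheadrightarrow \mf{tr}_{(m,n)}\bigl(\wt{L}(\wt{\mu})\bigr)$ you correctly deduce that the target is a nonzero highest weight $\wt{\G}(m,n)$-module.  But the sentence ``the unique maximal submodule \ldots\ surjects onto the maximal submodule of $\mf{tr}_{(m,n)}(\wt{L}(\wt{\mu}))$, which vanishes by irreducibility'' is either circular or a non sequitur: irreducibility of $\wt{L}(\wt{\mu})$ does not by itself force its truncation to be irreducible, and irreducibility of the truncation is exactly what must be proved.  Knowing that the maximal submodule of the finite-rank parabolic Verma surjects onto that of the target gives no information unless you already know the latter is zero.

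What actually makes the argument work (and is the content behind the citation of \cite[Lemma 3.2]{CLW}) is the following weight constraint, which comes from condition~(i) in the definition of $\Ot$: since every $\wt{\mf l}$-constituent of $\wt{L}(\wt{\mu})$ is of the form $L(\wt{\mf l},\wt{\nu})$ with $\nu\in P^+$, every weight $\gamma$ of $\wt{L}(\wt{\mu})$ satisfies $\langle\gamma,\widehat{E}_{ss}\rangle\ge 0$ for $s\in\wt{\I}\cap\hf\N$ and $\langle\gamma,\widehat{E}_{ss}\rangle\le 0$ for $s\in\wt{\I}\cap(-\hf\Z_+)$.  Now let $v$ be any $\wt{\mf b}(m,n)$-singular vector in $\mf{tr}_{(m,n)}\bigl(\wt{L}(\wt{\mu})\bigr)$, of weight $\nu$.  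For a positive root vector $\widehat{E}_{rs}\notin\wt{\G}(m,n)$ one has either $s\ge n+1$ (whence $\langle\nu+\epsilon_r-\epsilon_s,\widehat{E}_{ss}\rangle=-1<0$) or $r\le -m$ (whence $\langle\nu+\epsilon_r-\epsilon_s,\widehat{E}_{rr}\rangle=1>0$); in either case the resulting weight is forbidden, so $\widehat{E}_{rs}v=0$.  Thus $v$ is $\wt{\mf b}$-singular in $\wt{L}(\wt{\mu})$, hence a scalar multiple of the highest weight vector, and $\mf{tr}_{(m,n)}\bigl(\wt{L}(\wt{\mu})\bigr)$ is irreducible.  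You should replace your circular step with this argument.
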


\begin{rem}
Let $\la\in P^+$ and $m,n\in\Z_+\cup\{\infty\}$. Let $d$ denote the
boundary operators for the complexes of the homology groups
$H_\bullet(\wt{\mf{u}}_-;\wt{L}(\wt{\la}))$,
$H_\bullet(\mf{u}_-;L(\la))$,
$H_\bullet(\ov{\mf{u}}_-;\ov{L}(\ov{\la}))$,
$H_\bullet(\mf{u}^\diamond_-;\ov{L}({\la}^\diamond))$, and
$H_\bullet(\ov{\mf{u}}^\diamond_-;\ov{L}(\ov{\la}^\diamond))$. From
the formula of $d$  (see e.g.~\cite[(4.1)]{CL}) it is easy to see
that the truncation functors are compatible with $d$, i.e.,
\begin{align*}
d \circ\mf{tr}_{(m,n)} = \mf{tr}_{(m,n)}\circ d.
\end{align*}

It follows that truncation functors $\mf{tr}_{(m,n)}$ send these
$\mf u$-homology groups to the corresponding $\mf u$-homology
groups, and the corresponding module multiplicities of Levi
subalgebras (or equivalently the corresponding Kazhdan-Lusztig
polynomials) match under the truncation functors.
\end{rem}

\subsection{Character formulas for $\gl(k|2)$-modules in the BGG category}
\label{sec:char:gl(k2)}

Let $\la\in P^+$. Then Proposition \ref{truncation} and Theorem
\ref{character} together imply that, for any $m,n\in\Z_+$, the
character of the irreducible $\SG^\diamond(m|n)$-module
$\ov{L}_{m,n}^\diamond(\ov{\la}^\diamond)$ can be computed from
knowledge of the irreducible character of the $\G$-module $L(\la)$.
On the other hand $\text{ch}L(\la)$ is determined by the the
coefficients  $a_{\mu\la}=\ell_{\mu\la}(1)$, where the polynomials
$\ell_{\mu\la}(q)$ are type $A$ parabolic Kazhdan-Lusztig
polynomials (see Section \ref{sec:homology}).

Recalling the notations from Section \ref{sec:Of} we note that
$\SG^\diamond(1,0)\cong\gl(k|2)\oplus\C K$ (see Section
\ref{sec:glhat}), where $\gl(k|2)$ denotes the Lie subalgebra of
linear transformations on $\bigoplus_{i=1}^k\C v_{\ov i}\bigoplus\C
v_{-\hf}\bigoplus\C v_{\hf}$.  Recall that
$P^+=\bigcup_{a\in\C}P^+_a$. Now choose $Y_0=\emptyset$ so that
$\ov{\mf{l}}^\diamond_{1,0}$ is simply the Cartan subalgebra of
$\SG^\diamond(1,0)$. Now for a fixed $a\in\C$, $m,n\in\N$ and
$\la^0_j\in\Z$ we have
\begin{align*}
\la=a\La_0-\sum_{i=0}^{m-1}\epsilon_{-i}+\sum_{j}\la^0_j\epsilon_{\ov
j}+\sum_{l=1}^n\epsilon_l\in P^+_a.
\end{align*}
This corresponds to the dominant tuple
$(a,\la^0_1,\ldots,\la^0_k;\la^-,\la^+)$, with $\la^-=(1^m)$, and
$\la^+=(1^n)$. Thus, we have by Lemma~\ref{char:verma:change}
\begin{align*}
\ov{\la}^\diamond=a\La_0-m\epsilon_{-\hf}+\sum_{j}\la^0_j\epsilon_{\ov
j}+n\epsilon_\hf.
\end{align*}

Now let $\mu=p\epsilon_{-\hf}+\sum_{j=1}^k\la^0_j\epsilon_{\ov
j}+q\epsilon_\hf$ be any integral weight of $\gl(k|2)$. Suppose we
want to compute the character of the irreducible module with highest
weight $\mu$ with respect to the Borel associated to the following
fundamental system:
\begin{center}
\begin{equation}\label{Dynkin:gl1k1}
\hskip -3cm \setlength{\unitlength}{0.16in}
\begin{picture}(20,1)
\put(5.6,0){\makebox(0,0)[c]{$\bigotimes$}}
\put(8,0){\makebox(0,0)[c]{$\bigcirc$}}
\put(10.4,0){\makebox(0,0)[c]{$\bigcirc$}}
\put(14.85,0){\makebox(0,0)[c]{$\bigcirc$}}
\put(17.25,0){\makebox(0,0)[c]{$\bigcirc$}}
\put(19.75,0){\makebox(0,0)[c]{$\bigotimes$}}
\put(8.4,0){\line(1,0){1.55}} \put(10.82,0){\line(1,0){0.8}}
\put(13.2,0){\line(1,0){1.2}} \put(15.28,0){\line(1,0){1.45}}
\put(17.6,0){\line(1,0){1.65}}
\put(6,0){\line(1,0){1.6}}
\put(12.5,-0.1){\makebox(0,0)[c]{$\cdots$}}
\put(5.5,-1.3){\makebox(0,0)[c]{\tiny$\epsilon_{-\hf}-\epsilon_{\ov 1}$}}
\put(8,-1.3){\makebox(0,0)[c]{\tiny$\alpha_{\ov{1}}$}}
\put(10.3,-1.3){\makebox(0,0)[c]{\tiny$\alpha_{\ov{2}}$}}
\put(15,-1.3){\makebox(0,0)[c]{\tiny$\alpha_{\ov{k-2}}$}}
\put(17.2,-1.3){\makebox(0,0)[c]{\tiny$\alpha_{\ov{k-1}}$}}
\put(19.8,-1.3){\makebox(0,0)[c]{\tiny$\epsilon_{\ov k}-\epsilon_\hf$}}
\end{picture}\end{equation}
\vskip 0.5cm
\end{center}
First, by tensoring with an integral multiple of the determinant
representation, if necessary, we can assume that $q=0$ and $p\in\Z$.
Now choose $a=-p$. Then the weight $\ov{\la}^\diamond$ corresponding
to the dominant tuple
$\la=(-p,\la^0_1,\ldots,\la^0_k,\emptyset,\emptyset)$ restricts
precisely to the weight $\mu$ on $\gl(k|2)$. It follows that the
irreducible character of any integral highest weight with respect to
the Borel associated to the fundamental system  \eqref{Dynkin:gl1k1}
are determined by the usual Kazhdan-Lusztig polynomials of type $A$
Lie algebras.

Now consider the standard Borel subalgebra of $\gl(k|2)$
corresponding to the following fundamental system:
\begin{center}
\begin{equation}\label{Dynkin:glk2}
\hskip -2cm \setlength{\unitlength}{0.16in}
\begin{picture}(24,1)
\put(5.6,0){\makebox(0,0)[c]{$\bigcirc$}}
\put(8,0){\makebox(0,0)[c]{$\bigcirc$}}
\put(10.4,0){\makebox(0,0)[c]{$\bigcirc$}}
\put(14.85,0){\makebox(0,0)[c]{$\bigcirc$}}
\put(17.25,0){\makebox(0,0)[c]{$\bigotimes$}}
\put(19.75,0){\makebox(0,0)[c]{$\bigcirc$}}
\put(8.4,0){\line(1,0){1.55}} \put(10.82,0){\line(1,0){0.8}}
\put(13.2,0){\line(1,0){1.2}} \put(15.28,0){\line(1,0){1.45}}
\put(17.6,0){\line(1,0){1.65}}
\put(6,0){\line(1,0){1.6}}
\put(12.5,-0.1){\makebox(0,0)[c]{$\cdots$}}
\put(5.5,-1.3){\makebox(0,0)[c]{\tiny$\alpha_{\ov 1}$}}
\put(8,-1.3){\makebox(0,0)[c]{\tiny$\alpha_{\ov{2}}$}}
\put(10.3,-1.3){\makebox(0,0)[c]{\tiny$\alpha_{\ov{3}}$}}
\put(15,-1.3){\makebox(0,0)[c]{\tiny$\alpha_{\ov{k-1}}$}}
\put(17.4,-1.3){\makebox(0,0)[c]{\tiny$\epsilon_{k}-\epsilon_{-\hf}$}}
\put(20.5,-1.3){\makebox(0,0)[c]{\tiny$\epsilon_{-\hf}-\epsilon_\hf$}}
\end{picture}
\end{equation}
\vskip 0.5cm
\end{center}
Note that the Dynkin diagram in \eqref{Dynkin:gl1k1} is related to
the standard Dynkin diagram \eqref{Dynkin:glk2} by a sequence of odd
reflections with respect to the following odd roots:
\begin{align}\label{sequence:odd:ref}
\epsilon_{-\hf}-\epsilon_{\ov 1},\epsilon_{-\hf}-\epsilon_{\ov
2},\ldots, \epsilon_{-\hf}-\epsilon_{\ov k}.
\end{align}
Let us use $\Delta'(\gamma)$ and $L'(\gamma)$ to denote the
$\gl(k|2)$-Verma and irreducible modules of highest weight $\gamma$
with respect to the Borel associated to \eqref{Dynkin:glk2}. Now
suppose that we have for $\la\in P^+_a$
\begin{align*}
\text{ch}\ov{L}^\diamond(\ov{\la}^\diamond)=\sum_{\mu}a_{\mu\la}\text{ch}\ov{\Delta}^\diamond(\mu),
\end{align*}
where $\ov{\Delta}^\diamond(\mu)$ is the Verma module with respect
to the Borel in \eqref{Dynkin:gl1k1}. Let $[\ov{\la}^\diamond]'$ be
the highest weight obtained from $\ov{\la}^\diamond$ by applying the
sequence of odd reflections in \eqref{sequence:odd:ref} following
the prescription of Lemma \ref{hwt odd}. Furthermore, we observe
that
\begin{align*}
\text{ch}\ov{\Delta}^\diamond(\mu)
=\text{ch}\Delta'(\mu-k\epsilon_{-\hf}+\sum_{i=1}^k{\epsilon}_{\ov
i}).
\end{align*}
Thus, we conclude that
\begin{align*}
\text{ch}L'([\ov{\la}^\diamond]')=\sum_{\mu}a_{\mu\la}
\text{ch}\Delta'(\mu+\sum_{i=1}^k{\epsilon}_{\ov i}-k\epsilon_{-\hf}).
\end{align*}
Therefore, the Kazhdan-Lusztig polynomials of type $A$ also solve
the character problem for irreducible highest weight modules with
highest weights with respect to the standard Borel of $\gl(k|2)$.

\subsection{A variant of Brundan's conjecture}

For notations on Fock spaces below, we will refer to \cite{CWZ, CW}
(which differs somewhat from \cite{Br1}). Let $\VV$ with basis
$\{v_i\}_{i\in \Z}$ denote the natural $U_q(\gl_\infty)$-module and let
$\WW$ denote its dual module with basis $\{w_i\}_{i\in \Z}$. The
``Fock space" $\wedge^m \WW \otimes \VV^{\otimes k} \otimes \wedge^n
\WW$ admits a standard basis (using $v_i$'s and $w_i$'s). One can
define a bar involution on $\wedge^m \WW \otimes \VV^{\otimes k}
\otimes \wedge^n \WW$ which is compatible with the bar involution on
$U_q(\gl_\infty)$. This gives rise to the canonical basis and dual
canonical basis on (a completion of) $\wedge^m \WW \otimes
\VV^{\otimes k} \otimes \wedge^n \WW$ (cf. \cite{Lu, Br1}).

We denote by $\mc O_{\underline{m}|k|\underline{n}}$ the parabolic
BGG category of $\gl(k|m+n)$-modules of integral weights, which are
semisimple with respect to the Levi subalgebra $\gl(m) \oplus \mf
h_k \oplus \gl(n)$ (here $\mf h_k$ denotes the Cartan in $\gl(k)$),
with respect to the Borel associated to the fundamental system
below:
\begin{center}
\begin{equation*}
\hskip -3cm \setlength{\unitlength}{0.16in}
\begin{picture}(20,1)
\put(-0.4,0){\makebox(0,0)[c]{$\bigcirc$}}
\put(3.2,0){\makebox(0,0)[c]{$\bigcirc$}}
\put(5.6,0){\makebox(0,0)[c]{$\bigotimes$}}
\put(8,0){\makebox(0,0)[c]{$\bigcirc$}}
\put(10.4,0){\makebox(0,0)[c]{$\bigcirc$}}
\put(14.85,0){\makebox(0,0)[c]{$\bigcirc$}}
\put(17.25,0){\makebox(0,0)[c]{$\bigcirc$}}
\put(19.75,0){\makebox(0,0)[c]{$\bigotimes$}}
\put(22.25,0){\makebox(0,0)[c]{$\bigcirc$}}
\put(25.95,0){\makebox(0,0)[c]{$\bigcirc$}}
\put(-0.1,0){\line(1,0){0.8}}
\put(1.4,-0.1){\makebox(0,0)[c]{$\cdots$}}
\put(2.0,0){\line(1,0){0.8}} \put(3.6,0){\line(1,0){1.6}}
\put(6,0){\line(1,0){1.6}} \put(8.4,0){\line(1,0){1.55}}
\put(10.82,0){\line(1,0){0.8}}
\put(12.5,-0.1){\makebox(0,0)[c]{$\cdots$}}
\put(13.2,0){\line(1,0){1.2}} \put(15.28,0){\line(1,0){1.45}}
\put(17.6,0){\line(1,0){1.65}} \put(20.2,0){\line(1,0){1.65}}
\put(22.6,0){\line(1,0){0.8}}
\put(24.1,-0.1){\makebox(0,0)[c]{$\cdots$}}
\put(24.7,0){\line(1,0){0.8}}
\put(-0.9,-1.3){\makebox(0,0)[c]{\tiny$\epsilon_{\hf-m}-\epsilon_{\frac32-m}$}}
\put(3.5,-1.3){\makebox(0,0)[c]{\tiny$\epsilon_{-\frac32}-\epsilon_{-\hf}$}}
\put(5.5,1.3){\makebox(0,0)[c]{\tiny$\epsilon_{-\hf}-\epsilon_{\ov
1}$}}
 \put(8,-1.3){\makebox(0,0)[c]{\tiny$\alpha_{\ov{1}}$}}
\put(10.3,-1.3){\makebox(0,0)[c]{\tiny$\alpha_{\ov{2}}$}}
\put(15,-1.3){\makebox(0,0)[c]{\tiny$\alpha_{\ov{k-2}}$}}
\put(17.2,-1.3){\makebox(0,0)[c]{\tiny$\alpha_{\ov{k-1}}$}}
\put(19.8,1.3){\makebox(0,0)[c]{\tiny$\epsilon_{\ov
k}-\epsilon_\hf$}}
\put(22.4,-1.3){\makebox(0,0)[c]{\tiny$\epsilon_{\hf}-\epsilon_{\frac32}$}}
\put(26.6,-1.3){\makebox(0,0)[c]{\tiny$\epsilon_{n-\frac32}-\epsilon_{n-\hf}$}}
\end{picture}
\end{equation*}
\vskip 0.8cm
\end{center}
We shall prove a variant of Brundan's conjecture (see
\cite[Conjecture~4.32]{Br1}) which can be informally formulated as
follows. Note that the Borel subalgebra used here is not a standard
one in contrast to Brundan's original setting.

\begin{thm}  \label{th:BKL}
The category $\mc O_{\underline{m}|k|\underline{n}}$ categorifies
the $U_q(\gl_\infty)$-module $\wedge^m\WW \otimes \VV^{\otimes k}
\otimes \wedge^n \WW$, where the parabolic Verma, tilting and simple
$\gl(k|m+n)$-modules correspond to the standard, canonical, and dual
canonical basis (at $q=1$), respectively.
\end{thm}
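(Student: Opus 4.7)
The plan is to identify $\mc O_{\underline{m}|k|\underline{n}}$ with the finite-rank super category $\Olrf$ (taking $Y_0=\emptyset$), then transport the problem through the truncation and super duality equivalences to the purely even parabolic category $\Oa$ of $\G$, where the Fock space categorification is classical parabolic Kazhdan--Lusztig theory of type $A$.

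First I would verify that $\SG^\diamond(m,n)\cong\gl(k|m+n)\oplus\C K$ and that the Dynkin diagram drawn in the theorem is precisely that of $\SG^\diamond(m,n)$ with respect to $\ov{\mf b}^\diamond(m,n)$ from Section~\ref{subalgebras}. With $Y_0=\emptyset$ the Levi $\ov{\mf l}^\diamond(m,n)$ is $\gl(m)\oplus\mf{h}_k\oplus\gl(n)$, matching the parabolic in the theorem. Splitting by the central character $a$ of $K$, the integral part of $\mc O_{\underline{m}|k|\underline{n}}$ decomposes as $\bigoplus_a\Olrf$, and its integral dominant weights are parametrized by the tuples $\la=(a,\la^0_1,\ldots,\la^0_k;\la^-,\la^+)$ with $\la^\pm$ partitions of bounded length appearing in Section~\ref{sec:hw}. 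Following the dictionary of \cite[Section~3]{CW} (or \cite{Br1}), these tuples label the standard basis of $\wedge^m\WW\otimes\VV^{\otimes k}\otimes\wedge^n\WW$ by sending $\ov\la^\diamond$ to the appropriate tensor of $w$- and $v$-vectors.

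The next step is to chain the truncation and super duality equivalences: starting from $\Olrf$, pull back through $\mf{tr}_{(m,n)}:\Olrb\to\Olrf$ (Proposition~\ref{truncation}), then through $\ov T^\diamond:\Otb\to\Olrb$ (Theorem~\ref{thm:equivalence}), and finally push forward through $T:\Otb\to\Oa$. Each arrow preserves parabolic Verma, simple, and tilting modules by Theorem~\ref{matching:modules}, Proposition~\ref{tiltingO}, and Proposition~\ref{truncation}, and $\mf{tr}_{(m,n)}$ commutes with $\mf u$-homology by the remark following Proposition~\ref{truncation}. Combined with Theorem~\ref{matching:KLpol}, the parabolic Kazhdan--Lusztig polynomials in $\mc O_{\underline{m}|k|\underline{n}}$ therefore coincide with those in $\Oa$.

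Since $\G$ carries no odd simple roots (the indices in $\I=\mathbb{K}\cup\Z$ are all even) and $\mf l$ produces a parabolic of block shape $\infty|k|\infty$, the category $\Oa$ is a genuine parabolic BGG category for a direct limit of type $A$ Kac--Moody Lie algebras. By the parabolic Kazhdan--Lusztig conjecture \cite{KL,Deo}, proved in \cite{BB,BK}, together with its translation into the Fock space framework as in \cite{Br1,CW}, the category $\Oa$ categorifies $\wedge^\infty\WW\otimes\VV^{\otimes k}\otimes\wedge^\infty\WW$ with parabolic Verma, tilting, and simple modules corresponding to the standard, canonical, and dual canonical bases (at $q=1$). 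The truncation on the module side matches the obvious truncations $\wedge^\infty\to\wedge^m$ and $\wedge^\infty\to\wedge^n$ of Fock space at the two ends, yielding the desired categorification of $\wedge^m\WW\otimes\VV^{\otimes k}\otimes\wedge^n\WW$ by $\mc O_{\underline{m}|k|\underline{n}}$. The main technical hurdle is the Borel bookkeeping: verifying that the non-standard Borel drawn in the theorem really is $\ov{\mf b}^\diamond(m,n)$ under the above isomorphism, so that the dominant weights and parabolic Verma characters on both sides match on the nose. This is handled by the odd reflection arguments of Proposition~\ref{prop:change} and Lemma~\ref{char:verma:change}, in the same spirit as the $\gl(k|2)$ computation carried out in Section~\ref{sec:char:gl(k2)}.
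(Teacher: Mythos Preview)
Your overall strategy coincides with the paper's: identify $\mc O_{\underline{m}|k|\underline{n}}$ with $\Olrf$ for $Y_0=\emptyset$, pass through super duality and truncation to the purely even category $\Oa$, invoke classical parabolic Kazhdan--Lusztig theory there, and descend. The one substantive step you gloss over is the Fock-space matching. The classical Fock-space reformulation of type $A$ Kazhdan--Lusztig theory (as in \cite{Br1,CW}) presents $\Oa$ as categorifying $\wedge^{\infty+a}\VV\otimes\VV^{\otimes k}\otimes\wedge^\infty\VV$, with all outer factors built from $\VV$; your assertion that $\Oa$ directly categorifies $\wedge^\infty\WW\otimes\VV^{\otimes k}\otimes\wedge^\infty\WW$ is not what the literature gives you. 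The paper supplies the missing bridge: both $\wedge^{\infty+a}\VV$ and $\wedge^{\infty-a}\WW$ are the irreducible $U_q(\gl_\infty)$-module $L(\widehat\La_a)$, and the resulting isomorphism commutes with the bar involution, hence carries standard, canonical, and dual canonical bases to one another. On the level of index sets this is exactly the transposition $\la^\pm\mapsto(\la^\pm)'$ built into the bijection $\la\leftrightarrow\ov\la^\diamond$, so super duality on the categorical side becomes this Fock-space isomorphism on the combinatorial side. With that observation in place your argument is complete and agrees with the paper's.
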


The essence of Brundan's conjecture is that the entries of the
transition matrix between the (dual) canonical basis and the
standard basis on $\wedge^m\WW \otimes \VV^{\otimes k} \otimes
\wedge^n \WW$ should be interpreted as the Kazhdan-Lusztig
polynomials for the category $\mc
O_{\underline{m}|k|\underline{n}}$. Indeed our approach is powerful
enough to establish that these transition matrix entries coincide
with the Poincare polynomials for the corresponding $\mf u$-homology
groups defined in Section~\ref{sec:homology}. A precise formulation
of Theorem~\ref{th:BKL} and its variants with self-contained proofs
would take many pages as it would require us to repeat in our
setting much of the notations and constructions in \cite{Br1, CW}
among other things. Let us instead sketch a proof of
Theorem~\ref{th:BKL} below.

The space of semi-infinite $q$-wedges decomposes according to ``sectors'' labeled by $\Z$ (\cite{KMS}). For $a\in \Z$, let $\wedge^{\infty+a}\VV$ be the space of
semi-infinite $q$-wedges of sector $a$, which is isomorphic to
the highest weight $U_q(\gl_\infty)$-module $L(\widehat{\Lambda}_a)$
of highest weight being the $a$th fundamental weight. Similarly the
space $\wedge^{\infty-a}\WW$ of semi-infinite wedges of ``sector
$-a$" is also isomorphic to the $U_q(\gl_\infty)$-module
$L(\widehat{\Lambda}_a)$. The canonical isomorphism of
$U_q(\gl_\infty)$-modules $\wedge^{\infty+a} \VV \cong
\wedge^{\infty-a} \WW$ commutes with the bar involution. (This
extends the observation made in \cite{CWZ} for $a=0$.) This induces
an isomorphism of $U_q(\gl_\infty)$-modules
$$
\wedge^{\infty+a} \VV \otimes \VV^{\otimes k} \otimes \wedge^\infty
\VV \cong \wedge^{\infty-a} \WW \otimes \VV^{\otimes k} \otimes
\wedge^\infty \WW,
$$
which matches the standard, canonical, and dual canonical bases,
respectively. Based on this together with
Theorem~\ref{thm:equivalence} as well as a Fock space reformulation
(cf. e.g. \cite[Theorem~4.14]{CW} for a special case; the general
case is similar) of the Kazhdan-Lusztig conjecture for parabolic BGG
categories in type $A$ \cite{BB, BK, KL}, we establish a
version of Theorem~\ref{th:BKL} for $m=n=\infty$.
Theorem~\ref{th:BKL} for finite $m$ and $n$ follows from this
version for $m=n=\infty$ and the compatibility of the (dual)
canonical basis for varying $m,n$ (compare \cite[Corollary~
2.10]{CWZ}, \cite[Corollary~ 2.6]{CW}).

\begin{rem}
Note that Theorem~\ref{th:BKL} for $m=n=1$ solves the irreducible
character problem of the full BGG category of $\gl(k|2)$-modules,
which can also be viewed as a reformulation of the result in
Section~\ref{sec:char:gl(k2)}. On the other hand, it is easy to give a
formula for the irreducible characters in the category $\mc O_{\underline{m}|k|\underline{n}}$ for general $m$ and $n$, in the spirit of Section~\ref{sec:char:gl(k2)}, if
one is willing to introduce messier notations.
\end{rem}

\section{Some more variations}

The formulation and construction in this paper have variations which
unfortunately involve more complicated notations. In this section,
we will explain one such variation and its implication on $\mf
u$-homology computation.

For fixed $p,q\in\Z_+$, let $F=\{r\in\hf\Z\,\vert\,-p< r\le 0\,\,
{\rm or}\,\,\hf\le r\le q\}$. Set ${\mathbb J}={\mathbb{I}}\cup F$,
$\ov{\mathbb J}=\ov{\mathbb{I}}\cup F$, ${\mathbb
J}^\diamond={\mathbb{I}}^\diamond\cup F$ and $\ov{\mathbb
J}^\diamond=\ov{\mathbb{I}}^\diamond\cup F$. The subalgebra of
$\wt{\G}$ generated by $\widehat{E}_{rs}$ with $r,s\in {\mathbb{J}}$
(respectively $\ov{\mathbb{J}}$, ${\mathbb{J}}^\diamond$,
$\ov{\mathbb{J}}^\diamond$) is denoted by $\mc{G}$ (respectively
$\ov{\mc{G}}$, ${\mc{G}}^\diamond$ and $\ov{\mc{G}}^\diamond$).
Similar results in \secref{sec:borel} and \secref{sec:O} are still
valid if $\G$ (respectively $\SG$, $\G^\diamond$ and $\SG^\diamond$)
are replaced by $\mc{G}$ (respectively $\ov{\mc{G}}$,
${\mc{G}}^\diamond$ and $\ov{\mc{G}}^\diamond$). It is worth
pointing out that the sets $P^+_a$, $\ov{P}^+_a$, $P^{\diamond+}_a$
and $\ov{P}^{\diamond+}_a$ of all dominant weights are determined by
the choice of the orderings preserving the orderings of positive
integers, positive half integers, non-positive integers and negative
half integers and satisfying $i<b<j$ for $i\in-\hf\Z_+$,
$b\in\mathbb{K}$ and $j\in\hf\N$. We do not carry out any details
here.

Let us illustrate by an example. Define orderings of ${\mathbb
J}={\mathbb{I}}\cup F$, $\ov{\mathbb J}=\ov{\mathbb{I}}\cup F$,
${\mathbb J}^\diamond={\mathbb{I}}^\diamond\cup F$ and $\ov{\mathbb
J}^\diamond=\ov{\mathbb{I}}^\diamond\cup F$ respectively by
 {\allowdisplaybreaks
\begin{align*}
\cdots\prec -2\prec -1\prec 0\prec  -p+\hf\prec \cdots\prec
-{\frac{3}{2}}\prec -{\hf}\prec \ov{1}\prec \cdots\prec \ov{k}\\
\prec \hf\prec \frac{3}{2}\prec \cdots\prec q-\hf\prec 1\prec 2\prec
\cdots,
\end{align*}
\begin{align*}
\cdots\prec -2\prec -1\prec 0\prec  -p+\hf\prec
\cdots\prec -{\frac{3}{2}}\prec -{\hf}\prec \ov{1}\prec \cdots\prec \ov{k} \\
\prec 1\prec 2\prec \cdots \prec q\prec \hf\prec \frac{3}{2}\prec \cdots,
\end{align*}
\begin{align*}
\cdots\prec -{\frac{3}{2}}\prec -{\hf}\prec -p+1\prec
\cdots -2\prec -1\prec  0\prec \ov{1}\prec \cdots\prec \ov{k}\\ \prec
\hf\prec \frac{3}{2}\prec \cdots\prec q-\hf\prec 1\prec 2\prec \cdots,
\end{align*}
\begin{align*}
\cdots\prec -{\frac{3}{2}}\prec -{\hf}\prec -p+1\prec \cdots -2\prec
-1\prec  0\prec \ov{1}\prec \cdots\prec \ov{k}\\
\prec 1\prec 2\prec \cdots \prec q\prec \hf\prec \frac{3}{2}\prec \cdots.
\end{align*}
 }
The corresponding  fundamental systems are indicated in the
following Dynkin diagrams:
\bigskip
\begin{center}
\hskip -3cm \setlength{\unitlength}{0.16in}
\begin{picture}(24,1)
\put(-.5,0.5){\makebox(0,0)[c]{$\mc{G}$:}}
\put(3.0,0.5){\makebox(0,0)[c]{{\ovalBox(1.6,1.2){$\mc{L}$}}}}
\put(3.8,0.5){\line(1,0){1.85}}
\put(6.2,0.5){\makebox(0,0)[c]{$\bigotimes$}}
\put(6.6,0.5){\line(1,0){1.85}}
\put(9.25,0.5){\makebox(0,0)[c]{{\ovalBox(1.6,1.2){$\mc{L}_p^\diamond$}}}}
\put(10.05,0.5){\line(1,0){1.85}}
\put(12.35,0.5){\makebox(0,0)[c]{$\bigotimes$}}
\put(12.8,0.5){\line(1,0){1.85}}
\put(15.45,0.5){\makebox(0,0)[c]{{\ovalBox(1.6,1.2){$\mc{K}$}}}}
\put(16.25,0.5){\line(1,0){1.85}}
\put(18.55,0.5){\makebox(0,0)[c]{$\bigotimes$}}
\put(19.0,0.5){\line(1,0){1.85}}
\put(21.65,0.5){\makebox(0,0)[c]{{\ovalBox(1.6,1.2){$\ov{\mc{R}}_q$}}}}
\put(22.45,0.5){\line(1,0){1.85}}
\put(24.75,0.5){\makebox(0,0)[c]{$\bigotimes$}}
\put(25.2,0.5){\line(1,0){1.85}}
\put(27.85,0.5){\makebox(0,0)[c]{{\ovalBox(1.6,1.2){$\mc{R}$}}}}
\put(6.5,-0.5){\makebox(0,0)[c]{\tiny$\epsilon_0-\epsilon_{\hf-p}$}}
\put(12.5,-0.5){\makebox(0,0)[c]{\tiny$\epsilon_{-\hf}-\epsilon_{\bar{1}}$}}
\put(18.5,-0.5){\makebox(0,0)[c]{\tiny$\epsilon_{\ov{k}}-\epsilon_{\hf}$}}
\put(24.7,-0.5){\makebox(0,0)[c]{\tiny$\epsilon_{q-\hf}-\epsilon_{1}$}}
\end{picture}
\end{center}
\bigskip
\begin{center}
\hskip -3cm \setlength{\unitlength}{0.16in}
\begin{picture}(24,1)
\put(-.5,0.5){\makebox(0,0)[c]{$\ov{\mc{G}}$:}}
\put(3.0,0.5){\makebox(0,0)[c]{{\ovalBox(1.6,1.2){$\mc{L}$}}}}
\put(3.8,0.5){\line(1,0){1.85}}
\put(6.2,0.5){\makebox(0,0)[c]{$\bigotimes$}}
\put(6.6,0.5){\line(1,0){1.85}}
\put(9.25,0.5){\makebox(0,0)[c]{{\ovalBox(1.6,1.2){$\mc{L}_p^\diamond$}}}}
\put(10.05,0.5){\line(1,0){1.85}}
\put(12.35,0.5){\makebox(0,0)[c]{$\bigotimes$}}
\put(12.8,0.5){\line(1,0){1.85}}
\put(15.45,0.5){\makebox(0,0)[c]{{\ovalBox(1.6,1.2){$\mc{K}$}}}}
\put(16.25,0.5){\line(1,0){1.85}}
\put(18.55,0.5){\makebox(0,0)[c]{$\bigcirc$}}
\put(19.0,0.5){\line(1,0){1.85}}
\put(21.65,0.5){\makebox(0,0)[c]{{\ovalBox(1.6,1.2){$\mc{R}_q$}}}}
\put(22.45,0.5){\line(1,0){1.85}}
\put(24.75,0.5){\makebox(0,0)[c]{$\bigotimes$}}
\put(25.2,0.5){\line(1,0){1.85}}
\put(27.85,0.5){\makebox(0,0)[c]{{\ovalBox(1.6,1.2){$\ov{\mc{R}}$}}}}
\put(6.5,-0.5){\makebox(0,0)[c]{\tiny$\epsilon_0-\epsilon_{\hf-p}$}}
\put(12.5,-0.5){\makebox(0,0)[c]{\tiny$\epsilon_{-\hf}-\epsilon_{\bar{1}}$}}
\put(18.5,-0.5){\makebox(0,0)[c]{\tiny$\epsilon_{\ov{k}}-\epsilon_{1}$}}
\put(24.7,-0.5){\makebox(0,0)[c]{\tiny$\epsilon_{q}-\epsilon_{\hf}$}}
\end{picture}
\end{center}
\bigskip
\begin{center}
\hskip -3cm \setlength{\unitlength}{0.16in}
\begin{picture}(24,1)
\put(-.5,0.5){\makebox(0,0)[c]{${\mc{G}}^\diamond$:}}
\put(3.0,0.5){\makebox(0,0)[c]{{\ovalBox(1.6,1.2){$\mc{L}^\diamond$}}}}
\put(3.8,0.5){\line(1,0){1.85}}
\put(6.2,0.5){\makebox(0,0)[c]{$\bigotimes$}}
\put(6.6,0.5){\line(1,0){1.85}}
\put(9.25,0.5){\makebox(0,0)[c]{{\ovalBox(1.6,1.2){$\mc{L}_p$}}}}
\put(10.05,0.5){\line(1,0){1.85}}
\put(12.35,0.5){\makebox(0,0)[c]{$\bigcirc$}}
\put(12.8,0.5){\line(1,0){1.85}}
\put(15.45,0.5){\makebox(0,0)[c]{{\ovalBox(1.6,1.2){$\mc{K}$}}}}
\put(16.25,0.5){\line(1,0){1.85}}
\put(18.55,0.5){\makebox(0,0)[c]{$\bigotimes$}}
\put(19.0,0.5){\line(1,0){1.85}}
\put(21.65,0.5){\makebox(0,0)[c]{{\ovalBox(1.6,1.2){$\ov{\mc{R}}_q$}}}}
\put(22.45,0.5){\line(1,0){1.85}}
\put(24.75,0.5){\makebox(0,0)[c]{$\bigotimes$}}
\put(25.2,0.5){\line(1,0){1.85}}
\put(27.85,0.5){\makebox(0,0)[c]{{\ovalBox(1.6,1.2){$\mc{R}$}}}}
\put(6.5,-0.5){\makebox(0,0)[c]{\tiny$\epsilon_{-\hf}-\epsilon_{1-p}$}}
\put(12.5,-0.5){\makebox(0,0)[c]{\tiny$\epsilon_{-1}-\epsilon_{\bar{1}}$}}
\put(18.5,-0.5){\makebox(0,0)[c]{\tiny$\epsilon_{\ov{k}}-\epsilon_{\hf}$}}
\put(24.7,-0.5){\makebox(0,0)[c]{\tiny$\epsilon_{q-\hf}-\epsilon_{1}$}}
\end{picture}
\end{center}
\bigskip
\begin{center}
\hskip -3cm \setlength{\unitlength}{0.16in}
\begin{picture}(24,1)
\put(-.5,0.5){\makebox(0,0)[c]{$\ov{\mc{G}}^\diamond$:}}
\put(3.0,0.5){\makebox(0,0)[c]{{\ovalBox(1.6,1.2){$\mc{L}^\diamond$}}}}
\put(3.8,0.5){\line(1,0){1.85}}
\put(6.2,0.5){\makebox(0,0)[c]{$\bigotimes$}}
\put(6.6,0.5){\line(1,0){1.85}}
\put(9.25,0.5){\makebox(0,0)[c]{{\ovalBox(1.6,1.2){$\mc{L}_p$}}}}
\put(10.05,0.5){\line(1,0){1.85}}
\put(12.35,0.5){\makebox(0,0)[c]{$\bigcirc$}}
\put(12.8,0.5){\line(1,0){1.85}}
\put(15.45,0.5){\makebox(0,0)[c]{{\ovalBox(1.6,1.2){$\mc{K}$}}}}
\put(16.25,0.5){\line(1,0){1.85}}
\put(18.55,0.5){\makebox(0,0)[c]{$\bigcirc$}}
\put(19.0,0.5){\line(1,0){1.85}}
\put(21.65,0.5){\makebox(0,0)[c]{{\ovalBox(1.6,1.2){$\mc{R}_q$}}}}
\put(22.45,0.5){\line(1,0){1.85}}
\put(24.75,0.5){\makebox(0,0)[c]{$\bigotimes$}}
\put(25.2,0.5){\line(1,0){1.85}}
\put(27.85,0.5){\makebox(0,0)[c]{{\ovalBox(1.6,1.2){$\ov{\mc{R}}$}}}}
\put(6.5,-0.5){\makebox(0,0)[c]{\tiny$\epsilon_{-\hf}-\epsilon_{1-p}$}}
\put(12.5,-0.5){\makebox(0,0)[c]{\tiny$\epsilon_{-1}-\epsilon_{\bar{1}}$}}
\put(18.5,-0.5){\makebox(0,0)[c]{\tiny$\epsilon_{\ov{k}}-\epsilon_{1}$}}
\put(24.7,-0.5){\makebox(0,0)[c]{\tiny$\epsilon_{q}-\epsilon_{\hf}$}}
\end{picture}
\end{center}
\bigskip

Let $a,\la_{1},\ldots,\la_{k}\in\C$, $\la^-$ and $\la^+$ be two
partitions. Associated to the tuple
$\la=(a,\la_{1},\ldots,\la_{k};\la^-,\la^+)$ satisfying the dominant
condition \eqnref{dominant}, set
 {\allowdisplaybreaks
\begin{align*}
\La_F^+(\la) &:=\sum_{i=1}^{k}\la_{i}\epsilon_{\ov{i}}
+\sum_{j=1}^{q}(\la^+)'_{j}\epsilon_{j-\hf}+\sum_{j\in\N}\langle\la^+_{j}-q\rangle\epsilon_{j}\\
\ov{\La}_F^+(\la)&:=\sum_{i=1}^{k}\la_{i}\epsilon_{\ov{i}}
+\sum_{j=1}^q\la^+_{j}\epsilon_{j} +\sum_{j\in\N}\langle(\la^+)'_{j}-q\rangle\epsilon_{j-\hf},\\
{\La}_F^-(\la)&:=-\sum_{j=1}^p(\la^-)'_{j}\epsilon_{-j+\hf}
-\sum_{j\in\N}\langle\la^-_{j}-p\rangle\epsilon_{-j+1}+ a\La_0,\\
\ov{\La}_F^-(\la)&:=-\sum_{j=1}^p\la^-_{j}\epsilon_{-j+1}
-\sum_{j\in\N}\langle(\la^-)'_{j}-p\rangle\epsilon_{-j+\hf}+ a\La_0.
\end{align*}
 }
Associated to such tuple, we define the weights
\begin{align*}
{\la}_F&:={\La}^-(\la)+\La^+(\la),\quad\quad\quad
\ov{\la}_F:=\La^-(\la)+\ov{\La}^+(\la),\\
{\la}_F^\diamond&:=\ov{\La}^-(\la)+\La^+(\la),\quad\quad\qquad
\ov{\la}_F^\diamond:=\ov{\La}^-(\la)+\ov{\La}^+(\la).
\end{align*}
The sets $P^+_a$, $\ov{P}^+_a$, $P^{\diamond+}_a$ and
$\ov{P}^{\diamond+}_a$ of all dominant weights are the sets
consisting of the weights of the form $\la_F$, $\ov{\la}_F$,
$\la_F^{\diamond}$, $\ov{\la}_F^{\diamond}$, respectively. The main
results on super duality in Sections~\ref{sec:borel}, \ref{sec:O}
and \ref{sec:SDtilting} afford straightforward counterparts in this
new setting.

\begin{rem}
The $\mf u$-(co)homology formulas in the sense of Kostant and
Enright \cite{E} for unitarizable modules of general linear Lie
superalgebras have been computed in \cite{LZ} (cf. \cite[Theorem
5.1, Theorem 4.4]{HLT}), and they can be easily recovered via the
super duality of this paper by an appropriate choice of $\la$ and of
the orderings defined in the example above. Let us set $k=p=0$ and
choose $\la$ satisfying $\la^-_1+\la^+_1\le a$ with $a\in \Z_+$.
Then we have Enright type $\mf u$-(co)homology formulas for
unitarizable modules $L(\ov{\mc{G}}^\diamond,\ov{\la}^\diamond_F)$
(cf. \cite[Theorem 4.4]{HLT}). Using \thmref{matching:KL}, its
analogue for the irreducible modules
$L({\mc{G}}^\diamond,{\la}_F^\diamond)$ together with Enright's
(co)homology formulas for
$L(\ov{\mc{G}}^\diamond,\ov{\la}^\diamond_F)$, we recover
\cite[Theorem 6.1]{LZ}.
\end{rem}

 \vspace{.3cm}
{\bf Acknowledgments.} The first author is partially supported by an
NSC-grant and an Academia Sinica Investigator grant. The second
author is partially supported by an NSC-grant. The third author is
partially supported by an NSF grant.

\bigskip
\frenchspacing

\end{document}